\documentclass{article}
\usepackage{eurosym}
\usepackage{appendix}
\usepackage[T1]{fontenc}
\usepackage{epsf,epsfig,subfigure}
\usepackage{amssymb}
\usepackage{amsfonts,mathrsfs}
\usepackage{amsmath}
\usepackage[english]{babel}
\usepackage{graphicx,multirow}
\usepackage{indentfirst}
\usepackage[colorlinks=true]{hyperref}
\usepackage{bbm}
\usepackage{geometry}
\geometry{a4paper, portrait, margin=1in}
\graphicspath{{./Pictures/}}
\usepackage[latin1]{inputenc}
\usepackage{listings}
\usepackage{tikz}
\usetikzlibrary{intersections}
\usetikzlibrary{matrix}

\usepackage{multirow}
\allowdisplaybreaks[4]
\usepackage[latin1]{inputenc} % accents 8 bits dans le fichier
\usepackage[T1]{fontenc}

\usepackage{paralist,graphics,epsfig,graphicx,epstopdf,mathrsfs}
\usepackage{float,color,comment,tabulary,booktabs}
\usepackage[normalem]{ulem}

\newfloat{figure}{H}{lof}
\newfloat{table}{H}{lot}
\floatname{figure}{\figurename}
\floatname{table}{\tablename}

\numberwithin{equation}{section}
\newtheorem{theorem}{Theorem}[section]

\newtheorem{assumption}[theorem]{Assumption}

\newtheorem{definition}[theorem]{Definition}

\newtheorem{lemma}[theorem]{Lemma}

\newtheorem{proposition}[theorem]{Proposition}

\numberwithin{equation}{section}
\newenvironment{proof}[1][Proof]{\noindent\textit{#1.} }{\hfill \rule{0.5em}{0.5em}}

\newcommand\dd{\mathrm{d}}

\newcommand \dis {\displaystyle}

\newcommand{\dy}{\mathrm{d} y}

\newcommand{\ds}{\mathrm{d} s}

\newcommand{\dl}{\mathrm{d} l}
\renewcommand{\d}{\mathrm{d}}
\newcommand{\ee}{\mathrm{e}}

\newcommand{\R}{\mathbb{R}}

\newcommand{\BUC}{\rm BUC}
\newcommand{\LL}{\rm L}

\begin{document}

\title{\textbf{Traveling waves with continuous profile for hyperbolic Keller-Segel equation}}
\author{\textsc{Quentin Griette$^{(a)}$\thanks{Corresponding author. e-mail: \href{mailto:quentin.griette@univ-lehavre.fr}{quentin.griette@univ-lehavre.fr}. Q.G. acknowledges support from ANR via the project Indyana under grant agreement ANR-21-CE40-0008. } , Pierre Magal$^{(b)}$\thanks{Pierre Magal was at the origin of this research and the corresponding author of this article for the original submission. Sadly, he passed away before the reviewing process was complete. }, and Min Zhao$^{(c)}$\thanks{The research of this author is supported by  Natural Science Foundation of Tianjin (No. 23JCQNJC01010) and China Scholarship Council. e-mail: \href{mailto:minzhao9216@163.com}{minzhao9216@163.com}}}\\
	{\small \textit{$^{(a)}$Université Le Havre Normandie, Normandie Univ., LMAH UR 3821, 76600 Le Havre, France.}} \\
	{\small \textit{$^{(b)}$Univ. Bordeaux, IMB, UMR 5251, F-33400 Talence, France.}} \\
	{\small \textit{CNRS, IMB, UMR 5251, F-33400 Talence, France.}}\\
	{\small \textit{$^{(c)}$Tianjin Chengjian University, School of Science, 300384 Tianjin, China}}\\
}
\maketitle
	\begin{abstract}
	This work describes a hyperbolic model for cell-cell repulsion with population dynamics. We consider the pressure produced by a population of cells to describe their motion. We assume that cells try to avoid crowded areas and prefer locally empty spaces far away from the carrying capacity. Here, our main goal is to prove the existence of traveling waves with continuous profiles. This article complements our previous results about sharp traveling waves. We conclude the paper with numerical simulations of the PDE problem, illustrating such a result. An application to wound healing also illustrates the importance of traveling waves with a continuous and discontinuous profile. 
\end{abstract}
\bigskip
\noindent \textbf{Keywords:} Traveling waves; hyperbolic equation;  continuous-wave profiles; discontinuous wave profiles, sharp traveling waves 

\medskip 
\noindent \textbf{AMS Subject Classification:}  92C17, 35L60, 35D30
	\section{Introduction}
	
\noindent 	\textbf{The model and its motivation: } 	In this paper, we mainly consider the following equation:
	\begin{equation}\label{1.10}
		\left\{\begin{array}{l}
			\partial_{t} u(t, x)= \underset{\text{Cell-cell repulsion}}{\underbrace{  \chi \; \partial_{x}\left(u(t, x) \partial_{x} p(t, x)\right)}}+  \underset{\text{Vital dynamic}}{\underbrace{ \lambda \; u(t, x)\left(1- \dfrac{u(t,x)}{\kappa}\right)}}, \quad t>0, x \in \mathbb{R},\vspace{0.2cm} \\
			p(t, x)-\sigma^{2} \, \partial_{x x} p(t, x)=u(t, x), \quad t>0, x \in \mathbb{R},
		\end{array}\right.
	\end{equation}
	with the initial distribution 
	\begin{equation} \label{1.2}
		u(0,x)=u_0(x) \in \LL^\infty \left(\R\right), 
	\end{equation}
	where $\lambda>0$ is the growth rate, $\kappa>0$ is the carrying capacity, $\chi>0$ is the dispersion coefficient, $\sigma>0$  is a sensing coefficient, $x \to u(t,x)$ is the density of population, and $p(t, x)$ is an external pressure. 
	
	Here the term density of population means that 
	$$
	\int_{x_1}^{x_2} u(t,x)dx
	$$
	is the number of individuals between $x_1$ and $x_2$ (when $x_1<x_2$).

	In the model the term $\chi \partial_{x}\left(u(t, x) \partial_{x} p(t, x)\right)$  describes the cell-cell repulsion,  and a logistic term $\lambda u(t, x)(1- u(t,x)/\kappa)$   corresponds  the cell division, cell mortality, and the quadratic term $u(t,x)^2/\kappa$ corresponds to growth limitations due to quorum sensing (for short slow down the process of cell division) and due to competition for resources.  
	
	Replacing $u(t,x)$ and $p(t,x)$ by $\widehat{u}(t,x)=u(t/\lambda,x)/\kappa$ and $\widehat{p}(t,x)=p(t/\lambda,x)/ \kappa$, we obtain (dropping the hat notation)
		\begin{equation}\label{1.1}
		\left\{\begin{array}{l}
			\partial_{t} u(t, x)= \chi    \; \partial_{x}\left(u(t, x) \partial_{x} p(t, x)\right)+   u(t, x)(1- u(t, x)), \quad t>0, x \in \mathbb{R},\vspace{0.2cm} \\
			p(t, x)-\sigma^{2} \, \partial_{x x} p(t, x)=u(t, x), \quad t>0, x \in \mathbb{R}.
		\end{array}\right.
	\end{equation}
Therefore, through the paper we will assume  that 
$$
\lambda=1 \text{ and }\kappa=1.
$$
Our original motivation comes from the description of cells motion in a Petri dish. In a previous paper \cite{FGM-20}, we derived a two-dimensional version of \eqref{1.1} to model the cell-cell repulsion in a Petri dish. We considered that cells grow in a circular domain (the Petri dish) and generate a repulsive gradient that pushes back neighboring cells. We built a numerical simulation framework to study the solutions of the partial differential equation and compared the results to some real experiments realized by Pasquier and collaborators \cite{Pas-Mag-Bou-Web-11}. When starting from an isolated disk-like islet, the solution of the PDE looks like an expanding disk whose radius seems to be growing at a constant speed. We can study the shape of an expanding islet by considering traveling waves for \eqref{1.1}. Previously, we studied the well-posedness of the problem \eqref{1.1} in \cite{FGM-21A} and proved the existence of an asymptotic propagation discontinuous profile --- a traveling wave --- in \cite{FGM-21B}, corresponding to an initial data that is equal to 0 outside of some bounded region. In other words, in \cite{FGM-21B}, we considered the case of an initial population of cells with compact support: no cell exists initially outside of the islet. The traveling waves constructed in \cite{FGM-21B}  are called \textit{sharp} because the transition between the occupied space (the area where $u(t, x)>0$) and the empty space (when $u(t, x)=0$) occurs at some finite position.  We also proved in \cite{FGM-21B} that sharp traveling waves are necessarily discontinuous. Our model is related to the study of Ducrot et al. \cite{ducrot2011vitro} who introduced a complete model of in-vitro cell dynamics with many different behaviors at the cellular level. Other features of closely related models have been investigated in \cite{Duc-Fu-Mag-18, Duc-Mag-14, ducrot2020one, Hamel2020,  Henderson}.

	In the previous paper, we proved the existence of sharp traveling waves for  \eqref{1.1}. Our goal here is to complete the description of existing traveling waves that are not sharp. Formally, our work relates to the result of de Pablo and Vazquez \cite{Pablo-Vazquez}, who studied the existence of sharp and not sharp traveling waves for a porous medium equation. The porous medium equation corresponds (formally) to the case $\sigma \to 0$. 
	The convergence of the traveling waves when $\sigma \to 0$ has been observed only numerically in \cite{FGM-21B} and proved in \cite{GrietteHendersonTuranova}, where the authors also propose an alternate method for the construction of discontinuous fronts by vanishing viscosity among other results.

	\medskip
	\noindent \textbf{The notion of solution:}
	{Throughout this paper we impose that $p\in\LL^\infty(\mathbb{R})$ so the second line of \eqref{1.1} has a unique solution for a given $u(t, \cdot)\in\LL^\infty(\mathbb{R})$.}
	In order to give a sense of the solution \eqref{1.1}, we first assume that $x \to p(t,x)$ is regular enough. Then the nonlinear diffusion can be understood as
	$$
	\chi \partial_{x}\left(u(t, x) \partial_{x} p(t, x)\right)=\chi \partial_{x}u(t, x) \partial_{x} p(t, x)+\chi u(t, x) \partial_{xx} p(t, x)
	$$
	and by using the second equation of \eqref{1.1} we obtain
	$$
	\chi \partial_{x}\left(u(t, x) \partial_{x} p(t, x)\right)=\chi \partial_{x}u(t, x) \partial_{x} p(t, x)+ \frac{\chi}{\sigma^{2}} u(t, x) \left[ p(t,x)-u(t,x) \right].
	$$
	Therefore, the system \eqref{1.1} is understood for $t \geq 0$ and $x \in \R$ as
	\begin{equation}\label{1.3}
		\left\{\begin{array}{l}
			\partial_{t} u(t, x)=\chi \partial_{x}u(t, x) \partial_{x} p(t, x)+u(t, x) \left( \left( 1+\frac{\chi}{\sigma^{2}} p(t,x) \right)-\left(1+\frac{\chi}{\sigma^{2}} \right)u(t, x) \right), \vspace{0.2cm}  \\
			p(t, x)-\sigma^{2} \partial_{x x} p(t, x)=u(t, x),
		\end{array}\right.
	\end{equation}
	with the initial distribution 
	\begin{equation} \label{1.4}
		u(0,x)=u_0(x) \in \LL^\infty \left(\R\right).
	\end{equation}
	The existence and uniqueness of solutions of \eqref{1.3} in $ \LL^\infty \left(\R\right)$ have been considered as a subset of the weighted space $L^1_\eta  \left(\R\right)$ (with $\eta>0$) with the norm
	$$
	\Vert u\Vert_{L^1_\eta }=\int_{\R} e^{-\eta \vert x \vert}\vert u(x) \vert dx. 
	$$
	The existence and uniqueness of solutions for \eqref{1.3} has been studied by Fu, Griette, and Magal \cite[Theorem 2.2]{FGM-21A}.

\medskip 		
\noindent 	\textbf{Notion of traveling wave: } 
\begin{definition}\label{DEF2.1}
    A \textbf{traveling wave} is a special solution  of \eqref{1.1} such that $u(t, x)$ has the specific form 
\[ u(t, x)=U(x-ct), \text{ for a.e. } (t, x)\in\mathbb R^2, \]
where the \textbf{profile} $U$ has the following behavior at $\pm\infty$:
\[ 
\lim_{z\to - \infty}U(z) =1,\quad \lim_{z\to \infty} U(z)=0.  
\]
A traveling wave is \textbf{sharp} if  there exists $x_0 \in \R,$  such that 
$$ 
U(x)=0, \text{ for all } x >x_0.  
$$
A  traveling wave is \textbf{not sharp}  if 
$$ 
U(x)>0, \text{ for all } x \in \mathbb R. 
$$
    {We will say that system \eqref{1.1} has a \textbf{traveling wave with continuous profile} if we can find a bounded, continuous, and decreasing continuous  function $U: \R \to \R$ that is the profile of a traveling wave.}
\end{definition}
In Fu, Griette, and Magal \cite[Proposition 2.4]{FGM-21B}, we proved that the sharp traveling waves must be discontinuous.  That is to say that $x \to U(x)$   the traveling wave profile of \eqref{1.1} can be either continuous or discontinuous. We illustrated both situations in Figure \ref{Fig1}. 
	\begin{figure}[H]
		\centering
		
		\begin{tikzpicture}[line width=1pt, x=0.8cm, y=0.8cm]
			
			\draw (-7, 4.5) ..controls +(3, 0) and +(-1, 1) .. (-2, 3) -- (-2, 0);
			\path (-6.5, 2.5) node (line1) [anchor=north west] {discontinuous};
			\path (line1.south west) node [anchor=north west]{traveling wave};
			\draw[->] (-7, 0) -- (-1, 0) node [pos=0.5, below=5pt] {(a)};
			\draw[->] (-7, 0) -- (-7, 5);
			
			\draw (1, 4.5) ..controls +(3, 0) and +(-0.5, 2) .. (5, 2) ..controls +(0.5, -2) and +(-1, 0).. (6.5, 0);
			\path (1.5, 2.5) node (line1) [anchor=north west] {smooth};
			\path (line1.south west) node [anchor=north west] {traveling wave};
			\draw[->] (1, 0) -- (7, 0) node[pos=0.5, below=5pt] {(b)};
			\draw[->] (1, 0) -- (1., 5);
		\end{tikzpicture}
		\caption{\textit{An illustration of two types of traveling wave solutions.}}
		\label{Fig1}
	\end{figure}
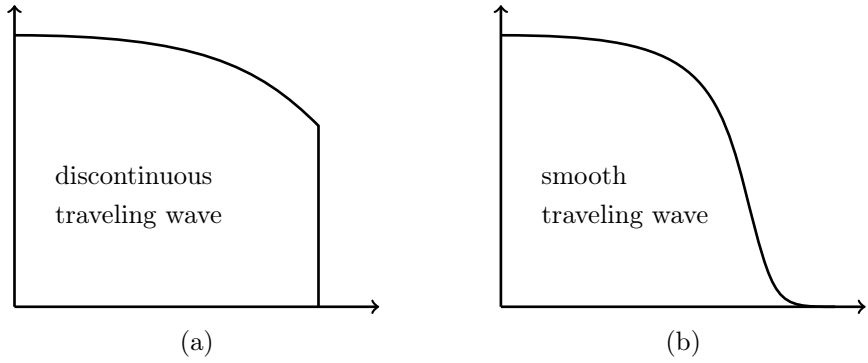

\medskip 	
\noindent 	\textbf{Estimations on the traveling speed for the discontinuous profile: } 	
Under a technical assumption on $\hat\chi=\frac{\chi}{\sigma^2}$, we can prove the existence of sharp traveling waves which present a jump at the vanishing point. 
\begin{assumption}[Bounds on $\hat\chi$]\label{ASS1.2}
	Let $\chi>0$ and $\sigma>0$ be given and define  $\hat\chi :=\frac{\chi}{\sigma^2}$. We assume that $0<\hat\chi<\bar\chi$, where $\bar\chi$ is the positive unique root of the function 
	\begin{equation*}
		\hat\chi\mapsto \ln\left(\frac{2-\hat\chi}{\hat\chi}\right)+\frac{2}{2+\hat\chi}\left(\frac{\hat\chi}{2}\ln\left(\frac{\hat\chi}{2}\right)+1-\frac{\hat\chi}{2}\right).
	\end{equation*}

\end{assumption}
The existence of traveling waves with discontinuous profile has been studied in  Fu, Griette, and Magal \cite[Theorem 2.4]{FGM-21B}. 
\begin{theorem}[Existence of a sharp discontinuous traveling wave]\label{TH1.2}
	Let Assumption \ref{ASS1.2} be satisfied. There exists a traveling wave $u(t,x)=U(x-ct)$ traveling at speed 
	\begin{equation*}
		c\in\left(\frac{\sigma\hat\chi}{2+\hat\chi},\frac{\sigma\hat\chi}{2}\right),  
	\end{equation*}
	where 
	$$
	\hat\chi=\frac{\chi}{\sigma^2}.
	$$
	
	Moreover, the profile $U$ satisfies the following properties (up to a shift in space): 
	\begin{itemize} 
		\item[{\rm (i)}] $U$ is {\em sharp} in the sense that $U(x)=0$ for all $x\geq 0$; moreover, $U$ has a discontinuity at $x=0$ with $ U(0^-)\geq \frac{2}{2+\hat\chi} $.
		\item[{\rm (ii)}]  $U$ is continuously differentiable  and strictly decreasing on $(-\infty, 0]$, and satisfies
		$$
		 -c\,U'-\chi(UP')' =U(1-U)  \text{ on } (-\infty, 0),
		 $$
		 and
		 $$
		 U=0  \text{ on } (0, \infty),
		 $$
		and
		$$
		P-\sigma^2 P''=U \text{ on }\R.
		$$
	\end{itemize}
\end{theorem}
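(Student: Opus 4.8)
The plan is to reduce the problem to a singular ODE for the profile, read off the transition conditions at the jump from a Rankine--Hugoniot (flux continuity) analysis, and then close the construction by a one-parameter shooting argument across the singular point. First I would insert the ansatz $u(t,x)=U(z)$, $p(t,x)=P(z)$ with $z=x-ct$ into \eqref{1.1}. Writing $\hat\chi=\chi/\sigma^2$ and using $\sigma^2P''=P-U$ to eliminate $P''$, the first equation becomes, on the set where $U>0$,
\[
-(c+\chi P')\,U' = U\,B(U,P),\qquad B(U,P):=1+\hat\chi P-(1+\hat\chi)U,
\]
coupled with $P-\sigma^2P''=U$. Equivalently, in conservation form $\frac{d}{dz}\big(-U(c+\chi P')\big)=U(1-U)$, so the flux $W:=-U(c+\chi P')$ is nondecreasing. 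The target is a heteroclinic-type orbit of the $(U,P,P')$ system joining the state $(U,P,P')=(1,1,0)$ at $z=-\infty$ (where $W=-c$) to the vanishing region, and I would linearize at $(1,1,0)$ to record the local invariant-manifold structure from which the interior orbit emanates.

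Second I would pin down the data at the jump. On $(0,\infty)$ the constraint $U\equiv0$ forces $P-\sigma^2P''=0$, hence $P(z)=P(0)\,e^{-z/\sigma}$ and $P'(0)=-P(0)/\sigma$. Since $P\in C^1$ while $U$ jumps, the flux $W$ must be continuous with $W(0)=0$; as $U(0^-)>0$ this is exactly $c+\chi P'(0)=0$, i.e. the discontinuity sits on the singular set of the profile ODE. Combining the two relations gives $c=\hat\chi\sigma\,P(0)$, so the whole boundary data at $z=0$ is governed by the single parameter $p_0:=P(0)$, with $c=\hat\chi\sigma p_0$. Requiring $U$ to be $C^1$ up to $0^-$ forces the numerator to vanish together with $c+\chi P'$, i.e. $B(U(0^-),p_0)=0$, whence $U(0^-)=\frac{1+\hat\chi p_0}{1+\hat\chi}$; this yields $U(0^-)\ge \frac{2}{2+\hat\chi}$ exactly when $p_0\ge\frac{1}{2+\hat\chi}$, i.e. $c\ge\frac{\sigma\hat\chi}{2+\hat\chi}$, matching the claimed lower endpoint. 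A short L'Hopital computation, using $D'(0)=\hat\chi\big(p_0-U(0^-)\big)<0$, then gives a finite slope $U'(0^-)$, confirming the stated $C^1$ regularity and strict monotonicity near $0$.

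Third, the construction itself is a shooting argument in $p_0$ integrated backward from $z=0$. Because $z=0$ is a degenerate point (both $c+\chi P'$ and $B$ vanish there), I would first desingularize by rescaling the independent variable so that the point becomes a genuine hyperbolic equilibrium of the rescaled flow, and verify that a unique $C^1$ orbit leaves it into the admissible region $\{0<U<1,\ P>0,\ c+\chi P'>0\}$ (consistent with $D'(0)<0$, so $c+\chi P'>0$ for $z<0$). Treating $p_0$ as the shooting parameter, I would show the orbit remains in this region and converges to $(1,1,0)$ as $z\to-\infty$, using monotonicity of $U$ and continuous dependence on $p_0$, and locate an admissible $p_0\in\big(\frac{1}{2+\hat\chi},\frac12\big)$ by an intermediate-value argument contrasting the two failure modes at the endpoints (the trajectory reaching $U=1$ at finite $z$, versus $U$ or $P$ leaving the admissible range).

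The main obstacle is this global connection across the singular point: one must prove that the regular orbit emanating from the degenerate endpoint $z=0$ is actually captured by the equilibrium at $-\infty$, and that the admissible speeds fill the window $\big(\frac{\sigma\hat\chi}{2+\hat\chi},\frac{\sigma\hat\chi}{2}\big)$. This is where Assumption \ref{ASS1.2} should enter: the function defining $\bar\chi$ is, up to rearrangement, an explicit integral of the reduced flow, and the condition $\hat\chi<\bar\chi$ is precisely what guarantees that the backward orbit approaches $U=1$ only asymptotically (so that $(1,1,0)$ is reached as a true limit) while keeping $c+\chi P'>0$ on $(-\infty,0)$. The nonlocal coupling through $P$ --- which prevents a clean two-dimensional phase plane and forces one to work with the full three-dimensional $(U,P,P')$ flow --- is the technical source of difficulty throughout.
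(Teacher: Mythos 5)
You should first be aware that the paper you were given does not prove Theorem \ref{TH1.2} at all: it is imported verbatim from \cite{FGM-21B} (cited there as Theorem 2.4), and the present paper's own contribution is the continuous-profile case, Theorem \ref{TH1.3}, proved in Sections \ref{Section3}--\ref{Section4} by a completely different route (an explicit logistic formula for the profile plus a Schauder fixed point on the set $\mathcal{A}$). So there is no internal proof to compare your proposal against. On its own terms, your local analysis is correct and does recover the constitutive relations hidden in the statement: writing the profile equation in conservation form, using continuity of the flux $-U(c+\chi P')$ across the jump together with $P(z)=P(0)e^{-z/\sigma}$ on $(0,\infty)$ gives $c=\hat\chi\sigma P(0)$, and the regularity requirement at $0^-$ gives $U(0^-)=\frac{1+\hat\chi P(0)}{1+\hat\chi}$, which translates the speed window $\left(\frac{\sigma\hat\chi}{2+\hat\chi},\frac{\sigma\hat\chi}{2}\right)$ into $P(0)\in\left(\frac{1}{2+\hat\chi},\frac12\right)$ and yields the bound $U(0^-)\geq\frac{2}{2+\hat\chi}$. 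This part is a genuinely useful consistency check on the statement.

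The gap is in the third step, which is the actual theorem. Your shooting argument treats $(U,P,P')$ as an autonomous ODE system integrated backward from the data $\left(U(0^-),p_0,-p_0/\sigma\right)$, but $P$ is not free initial data: it is the unique \emph{bounded} solution of $P-\sigma^2P''=U$ on all of $\R$, i.e. $P=\frac{1}{2\sigma}e^{-|\cdot|/\sigma}\ast U$, so $P(0)$ is a nonlocal functional of the (unknown) profile $U$ on the whole line. Backward integration of $\sigma^2P''=P-U$ from prescribed data at $z=0$ generically produces a mode growing like $e^{|z|/\sigma}$ as $z\to-\infty$; killing it is an extra codimension-one constraint that is entangled with the unknown $U$, and your single parameter $p_0$ cannot be counted against both this constraint and the connection to the state $(1,1,0)$ without a careful argument that you do not supply. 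You acknowledge this obstacle explicitly, and you also only conjecture (rather than derive) how Assumption \ref{ASS1.2} enters; as written, the proposal establishes necessary conditions at the jump but not existence. A workable repair would be to replace the shooting by a fixed-point formulation on the half-line $(-\infty,0)$ analogous to the operator $\mathcal{T}$ of Section \ref{Section3}, with the jump conditions you derived imposed as boundary data, which is closer in spirit to how such nonlocal profiles are actually constructed.
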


\medskip 	
In this article, we focus on the existence of traveling waves with continuous profiles. The main result of this paper is the following theorem. 
\begin{theorem}[Existence of a continuous traveling wave]\label{TH1.3}
		We assume that
	$$
	 c\geq \sqrt{\chi\left( 1+\frac{\chi}{\sigma^2} \right)}.
	$$
	There exists a traveling wave $u(t, x)=U(x-ct)$ with a continuous profile $x \to U(x)$  is continuously differentiable  and strictly decreasing,  and 
		\begin{equation}
		\lim_{x\to -\infty}U(x)=1, \text{ and }\lim_{x\to +\infty}U(x)=0,
	\end{equation}
	and satisfies traveling wave problem 
	\begin{equation}  \label{1.6}
	-c\,U'-\chi(UP')' =U(1-U),  \text{ on }\R, 
	\end{equation}
where 
\begin{equation} \label{1.7}
	P-\sigma^2 P''=U ,  \text{ on }\R .
\end{equation}

\end{theorem}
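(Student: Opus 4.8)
The plan is to recast the profile system \eqref{1.6}--\eqref{1.7} as a \emph{first-order} ODE for $U$ coupled to the nonlocal elliptic relation for $P$, and then to build the heteroclinic connection between the two constant states by a trapping argument in which the speed bound $c\ge 2\sqrt{\chi(1+\chi/\sigma^2)}$ plays the role of a Fisher--KPP minimal-speed condition for a majorant reaction. First I would expand $\chi(UP')'=\chi U'P'+\chi UP''$ and substitute $\chi UP''=\hat\chi\,U(P-U)$ coming from \eqref{1.7} (with $\hat\chi=\chi/\sigma^2$), exactly as in \eqref{1.3}, so that \eqref{1.6} turns into the scalar equation
\[
-(c+\chi P')\,U'=U\big[(1+\hat\chi P)-(1+\hat\chi)U\big],
\]
to be solved together with $P=\Phi_\sigma * U$, where $\Phi_\sigma(z)=\tfrac{1}{2\sigma}e^{-|z|/\sigma}$ is the Green kernel of $1-\sigma^2\partial_{zz}$. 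Writing $Q=P'$ this becomes the three-dimensional system $U'=-U[(1+\hat\chi P)-(1+\hat\chi)U]/(c+\chi Q)$, $P'=Q$, $Q'=(P-U)/\sigma^2$, whose only equilibria in the physical strip are $E_0=(0,0,0)$ and $E_1=(1,1,0)$; a continuous traveling wave is precisely a heteroclinic orbit from $E_1$ (as $z\to-\infty$) to $E_0$ (as $z\to+\infty$). A useful preliminary identity is obtained by integrating \eqref{1.6} from $z$ to $+\infty$, namely $U(z)\,(c+\chi P'(z))=\int_z^{\infty}U(1-U)\,dz'\ge 0$, which shows a posteriori that $c+\chi P'\ge 0$ along any such wave.

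The core of the argument is a comparison that both explains the threshold and controls the orbit. Since $0\le U\le 1$ and $\Phi_\sigma$ is a probability density, the maximum principle gives $0\le P\le 1$, whence the reaction term is dominated by the Fisher--KPP nonlinearity
\[
U\big[(1+\hat\chi P)-(1+\hat\chi)U\big]\le (1+\hat\chi)\,U(1-U).
\]
The associated semilinear equation $-c\,\bar U'-\chi\,\bar U''=(1+\hat\chi)\bar U(1-\bar U)$ admits a monotone front $\bar U$ exactly when $c\ge 2\sqrt{\chi(1+\hat\chi)}$, and I would use $\bar U$ (with $\bar P=\Phi_\sigma*\bar U$) as a super-solution for the hyperbolic profile operator; inserting $\bar U$ leaves the remainder $\chi\big[\bar U''-(\bar U\bar P')'\big]+\hat\chi\,\bar U(1-2\bar P+\bar U)$, which must be shown nonnegative using the smoothing of $\Phi_\sigma*$ (so that $\bar P$ is an ordered, regularized version of $\bar U$). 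A matching sub-solution, built from the stable eigenmode at $E_0$ or from a KPP front with unit growth rate, keeps $U>0$. Crucially, the same speed bound forces continuity: from $|P'|\le\|\Phi_\sigma'\|_{L^1}=1/\sigma$ and $c\ge 2\sqrt{\chi(1+\hat\chi)}\ge 2\sqrt{\chi\hat\chi}=2\chi/\sigma$ we get $c+\chi P'\ge c-\chi/\sigma>0$, so the orbit never meets the singular set $\{c+\chi P'=0\}$ at which the sharp, discontinuous profiles of Theorem \ref{TH1.2} are produced.

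With the barriers in hand I would assemble the wave in one of two equivalent ways. In the fixed-point formulation, the map $U\mapsto P=\Phi_\sigma * U\mapsto \mathcal T(U)$, where $\mathcal T(U)$ solves the first-order ODE above with the normalization $U(0)=\tfrac12$, is well defined and monotone once $c+\chi P'>0$ (the quasi-stationary states of the ODE are $U=0$, attracting as $z\to+\infty$, and $U=(1+\hat\chi P)/(1+\hat\chi)$, repelling and tending to $1$ as $z\to-\infty$); Schauder's theorem or monotone iteration in the order interval between sub- and super-solution then yields a fixed point. Alternatively, a Wa\.zewski/shooting argument follows the branch of the unstable manifold of $E_1$, uses the barriers to trap it in $\{0<U<1,\ 0\le P\le 1,\ c+\chi P'>0\}$, and invokes monotonicity $U'<0$ to force convergence to $E_0$. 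In either case the limiting $(U,P)$ is $C^1$ and strictly decreasing because $c+\chi P'>0$, and has the prescribed limits $1$ and $0$, which gives the claimed continuous wave.

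The step I expect to be the main obstacle is the comparison itself: the genuine flux $\chi(UP')'$ is degenerate and nonlocal, not the Laplacian $\chi U''$, so the KPP majorant does not dominate term by term, and the remainder $\chi[\bar U''-(\bar U\bar P')']+\hat\chi \bar U(1-2\bar P+\bar U)$ has no fixed sign without exploiting the contraction and ordering properties of $\Phi_\sigma*$. Controlling this remainder (equivalently, keeping the connecting orbit uniformly away from $\{c+\chi P'=0\}$) is exactly where the lower bound on $c$ is spent, and it is what distinguishes the continuous regime of this theorem from the sharp, discontinuous regime of Theorem \ref{TH1.2}.
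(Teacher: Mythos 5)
Your reduction of \eqref{1.6}--\eqref{1.7} to the first-order equation $-(c+\chi P')U'=U[(1+\hat\chi P)-(1+\hat\chi)U]$ coupled with $P=\Phi_\sigma*U$ is exactly the paper's starting point, and your fallback suggestion (a Schauder fixed point for the map $U\mapsto P\mapsto$ solution of the first-order ODE) is the route the paper actually takes. However, the argument you put forward as the core of the proof --- using the monotone KPP front $\bar U$ of $-c\bar U'-\chi\bar U''=(1+\hat\chi)\bar U(1-\bar U)$ as a super-solution --- is not salvageable as stated, for the reason you yourself flag: the flux $\chi(UP')'$ is nonlocal and degenerate, the remainder $\chi[\bar U''-(\bar U\bar P')']+\hat\chi\bar U(1-2\bar P+\bar U)$ has no sign, and no ordering property of $\Phi_\sigma*$ is identified that would close it. The paper never performs such a comparison; the only comparison-principle step it uses is the elementary one showing $V\le 1$ for the logistic ODE.

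More importantly, you misplace where the speed hypothesis is spent. Keeping the orbit away from the singular set $\{c+\chi P'=0\}$ is \emph{not} where $c\ge 2\sqrt{\chi(1+\hat\chi)}$ is needed: as you note, $\Vert P'\Vert_\infty\le 1/\sigma$ already gives $c+\chi P'>0$ under the weaker condition $c>\chi/\sigma$. The condition $c\ge 2\sqrt{\chi(1+\hat\chi)}$ is used to close a self-consistent a priori gradient bound: one works on the admissible set $\mathcal A=\{U\in C^1:\ 0\le U\le 1,\ 0\le U'\le \frac{c}{2\chi}\}$, deduces $0\le P'\le \sup U'\le \frac{c}{2\chi}$ hence $c-\chi P'\ge c/2$, and then the output $V=\mathcal T(U)$ satisfies $V'\le \frac{2}{c}(1+\hat\chi)$, which is $\le \frac{c}{2\chi}$ precisely when $c^2\ge 4\chi(1+\hat\chi)$. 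Without this invariance of the Lipschitz bound the fixed-point set is not stable under $\mathcal T$ and Schauder does not apply; your proposal contains no substitute estimate. Two further details would also need repair: the normalization $U(0)=\tfrac12$ must be replaced by $U(0)=u_0<\frac{\sigma^2}{2(\sigma^2+\chi)}$, which is required both for the explicit logistic formula $V(x)=u_0e^{\int_0^x\lambda}/(1+u_0\int_0^x\kappa e^{\int_0^s\lambda})$ to have a positive denominator on the whole line and for the strict monotonicity $V'>0$ (the latter uses $u_0<\sigma^2/(\sigma^2+\chi)$, which fails for $u_0=\tfrac12$ when $\chi>\sigma^2$); and the compactness and continuity of $\mathcal T$ must be established in a weighted space $BUC^1_\eta(\R)$ with $0<\eta<1/\sigma$, since $\mathcal A$ is not compact for the uniform norm.
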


\noindent 	\textbf{Estimations on the traveling speed: }  	We obtain the following condition for the existence of a traveling wave with a continuous profile for all-speed 
$$
 c\geq c_{\text{cont}}^*:=\sqrt{\chi\left( 1+\frac{\chi}{\sigma^2} \right)}.
$$
For the Fisher-KPP equation \cite{Aronson-Weinberger, KPP, Weinberger1982}, traveling waves only exist for half-line of positive traveling speeds. Moreover, there is a minimum speed $c_*>0$ below which no traveling wave exists. Moreover, we can construct traveling waves for any values $c$ above $c_*$. The existence of minimum speed is also true for porous medium equations with logistic dynamics \cite{Pablo-Vazquez}. By analogy with the porous medium equations, we expect that the minimal speed of the traveling waves corresponds to the sharp traveling wave constructed in \cite{FGM-21B}. In contrast, the continuous traveling waves constructed in the present paper correspond to higher velocities. Recall from \cite[Theorem 2.4]{FGM-21B} that the sharp traveling wave is expected to travel at a speed $c_{\text{sharp}}\in \left(\frac{\chi/\sigma}{2+\chi/(\sigma^2)}, \frac{\chi}{2\sigma}\right) $ and indeed we have that 
\begin{equation*}
	c_{\text{cont}}^*=\sqrt{\chi\left( 1+\frac{\chi}{\sigma^2} \right)}\geq  \dfrac{\chi}{\sigma} >  \frac{1}{2} \frac{\chi}{\sigma}\geq c_{\text{sharp}}. 
\end{equation*}
Further analysis will be necessary to connect the gap between $c_{\text{cont}}^*$ and $c_{\text{sharp}}$ and to possibly prove the non-existence of traveling waves slower than sharp waves. Understanding the relationships between the profiles and the traveling speeds is still an open problem. 
	
\medskip 	
The paper is organized as follows. Section \ref{Section2} is devoted to preliminary results. Section \ref{Section3} presents the fixed point problem and its properties. Section \ref{Section4} is devoted to the proof of Theorem \ref{TH1.3}. In Section \ref{Section5}, we present some numerical simulations. In Section \ref{Section6},  we present an application to wound healing.

\section{Preliminary}
\label{Section2}
We are interested in traveling waves of the system \eqref{1.1}. In Figure \ref{Fig1-2}, we illustrate the continuous traveling wave profiles.
\vspace{0.5cm}
\begin{figure}
	\centering
	\begin{tikzpicture}
		\coordinate [label=left:$1$] (1) at (-5,4.1);
		\coordinate [label=left:$0$](0) at (-5,-0.1);
		\draw[color=black, line width=1.5pt] (-5,4.1)--(7,4.1) ;
		\draw [name path=curve 1][color=blue, line width=1.5pt]plot[smooth, domain =-4.8:6.7](\x,{1*(1+tanh(-0.6*(\x-2)))^2})node[above]{$U(x)$};
		\draw [color=black, line width=1.5pt][name path=A](-5,-0.1)--(7,-0.1) ;
		\draw[name path=B,  line width=1.5pt] (1,-0.1) -- (1,4.1);
			\draw [name intersections={of=A and B, by=0}];
			\node [label=-90:$0$] at (0) {};
	\end{tikzpicture}
	\caption{\textit{In this figure, we plot the traveling wave profile $x \to U(x)$.}}  \label{Fig1-2}
\end{figure}
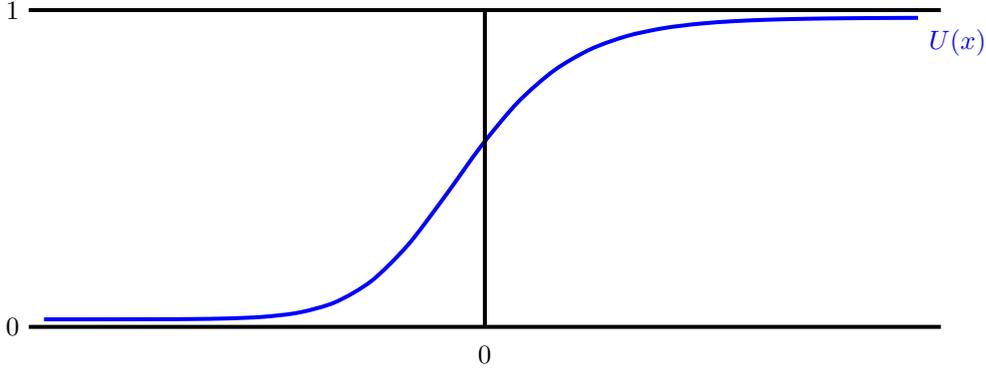

\begin{lemma}\label{LEM2.2}
	Assume that system \eqref{1.3} has a traveling wave $u(t,x)=U(x-ct)$. Then we must have
	$$
	p(t,x)=P(x-ct),
	$$
	where $P: \R \to \R$ is the unique bounded continuous function satisfying the elliptic equation
	$$
P( x)	-\sigma^{2} P''( x)=U(x), \forall x \in \mathbb{R}.
	$$
\end{lemma}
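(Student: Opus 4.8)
The plan is to establish the traveling-wave ansatz and then identify the equation that the pressure profile must satisfy. First I would substitute $u(t,x)=U(x-ct)$ into the elliptic equation of \eqref{1.3}, namely $p(t,x)-\sigma^2\partial_{xx}p(t,x)=u(t,x)$. Writing $z=x-ct$, the right-hand side becomes $U(z)$, which depends on $t$ and $x$ only through the combination $x-ct$. This suggests seeking $p$ in the same traveling form, so I would make the ansatz $p(t,x)=P(x-ct)$ and verify it is consistent: since $\partial_{xx}p(t,x)=P''(x-ct)$, the elliptic equation reduces to $P(z)-\sigma^2 P''(z)=U(z)$ for all $z\in\R$, which is exactly the claimed ODE.

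The substance of the lemma, however, is the word \emph{unique}: I must show that among all functions $p(t,x)$ solving the elliptic equation with right-hand side $U(x-ct)$, the only bounded continuous one has the traveling form with $P$ the bounded solution of the reduced ODE. For a fixed time $t$, the equation $p-\sigma^2\partial_{xx}p=U(\cdot-ct)$ is a linear second-order ODE in $x$ whose general solution is a particular solution plus a homogeneous part spanned by $e^{\pm x/\sigma}$. The key point is that requiring $p$ to be bounded in $x$ forces both exponential coefficients to vanish, leaving a unique bounded solution; this is the standard fact that convolution with the Bessel/Yukawa kernel $G_\sigma(x)=\frac{1}{2\sigma}e^{-|x|/\sigma}$ gives the unique bounded solution. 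Concretely I would write $P(z)=(G_\sigma * U)(z)=\int_\R G_\sigma(z-y)U(y)\,\dd y$, which is well defined and bounded because $U$ is bounded and $G_\sigma\in \LL^1(\R)$, and check by differentiating under the integral that it solves $P-\sigma^2 P''=U$.

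To finish, I would combine existence and uniqueness. Existence of a bounded continuous $P$ is given by the convolution formula above; for the traveling-wave solution $p$ this yields $p(t,x)=P(x-ct)$ upon translating the convolution variable, which shows the traveling form is achievable. Uniqueness then follows because the difference of any two bounded continuous solutions of the elliptic equation (at fixed $t$) solves the homogeneous equation $w-\sigma^2 w''=0$ and hence is a combination of $e^{\pm x/\sigma}$; boundedness forces $w\equiv 0$. Therefore $p(t,x)$ is uniquely determined and must equal $P(x-ct)$.

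The main obstacle is a conceptual rather than computational one: carefully justifying that boundedness selects a unique solution and that this forces the same $x-ct$ dependence as $U$, i.e. ruling out a solution $p$ that does not have the traveling form. The cleanest way around this is to solve the elliptic problem pointwise in $t$ using the explicit Yukawa kernel, observe that the unique bounded solution at time $t$ is $(G_\sigma*U(\cdot-ct))(x)=(G_\sigma*U)(x-ct)=P(x-ct)$ by translation invariance of the convolution, and invoke that $U$ is bounded (since a traveling-wave profile is bounded and decreasing) so that the convolution is well defined. The differentiability needed to make sense of $P''$ follows from the smoothing property of convolution with $G_\sigma$, which is $C^1$ away from the origin with an integrable second derivative in the distributional sense.
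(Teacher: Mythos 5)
Your proposal is correct and follows essentially the same route as the paper: the paper simply writes $p(t,x)=\frac{1}{2\sigma}\int_{\mathbb R}e^{-|y|/\sigma}u(t,x-y)\,\dd y=\frac{1}{2\sigma}\int_{\mathbb R}e^{-|y|/\sigma}U(x-y-ct)\,\dd y$ and reads off $p(t,x)=P(x-ct)$ with $P=G_\sigma*U$ solving the elliptic equation. The only difference is that you spell out the uniqueness step (the homogeneous solutions $e^{\pm x/\sigma}$ are unbounded, so the bounded solution is unique), which the paper leaves implicit; this is a welcome addition, not a different argument.
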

\begin{proof}
	\begin{equation}\label{2.1}
		p(t,x)=\frac{1}{2 \sigma} \int_{\mathbb{R}} e^{-\frac{|y|}{\sigma}} u(t,x-y) \d y= \frac{1}{2 \sigma} \int_{\mathbb{R}} e^{-\frac{|y|}{\sigma}} U(x-y-ct) \d y,
	\end{equation}
	therefore $p(t,x)=P(x-ct)$ where
	\begin{equation*}
		P(x)= \frac{1}{2 \sigma} \int_{\mathbb{R}} e^{-\frac{|y|}{\sigma}} U(x-y) \dy \Leftrightarrow 	P(x)-\sigma^{2} P''( x)=U(x),x \in \mathbb{R}.
	\end{equation*}
\end{proof}

The following proposition was proved in Fu, Griette and Magal \cite[Proposition 2.4]{FGM-21B}.
\begin{proposition}\label{PRO2.3}
Assume that $U$ is a continuous profile of traveling wave. Then $U:\R \to \R$ is continuously differentiable,  and
	\begin{equation} \label{2.2}
	c+\chi P'(x) >0, \forall x \in \R  \left( \Leftrightarrow 0 \leq -P'(x) < \dfrac{c}{\chi}, \forall x \in \R\right). 		
	\end{equation}
\end{proposition}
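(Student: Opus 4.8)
The plan is to derive a single first-order identity for the auxiliary function $\Phi:=(c+\chi P')\,U$, written in the profile variable $z=x-ct$, and to read off both assertions of the proposition from it. First I would record the elementary regularity and monotonicity of $P$. Since $U$ is bounded and continuous, the representation of Lemma~\ref{LEM2.2} writes $P=K\ast U$ with the symmetric kernel $K(y)=\tfrac{1}{2\sigma}e^{-|y|/\sigma}$, so $P$ is bounded, $P\in C^2$ (because $P''=(P-U)/\sigma^2$ is continuous), and $P'=K'\ast U$ is bounded with $P'(z)\to0$ as $z\to\pm\infty$. Moreover $K\ge0$ is a symmetric probability kernel and $U$ is nonincreasing, hence $P$ is nonincreasing and $P'\le0$; this already yields the left inequality $0\le-P'$ in \eqref{2.2}.

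Next I would use that $u(t,x)=U(x-ct)$, together with $p(t,x)=P(x-ct)$, is a solution of \eqref{1.3}, hence of the conservation form $\partial_t u=\chi\,\partial_x(u\,\partial_x p)+u(1-u)$ in the distributional sense. Here $u\,\partial_x p=UP'(x-ct)\in L^\infty$, so the flux derivative is a well-defined distribution and no a priori differentiability of $U$ is required. Reducing to the profile variable, this reads, in $\mathcal{D}'(\R)$,
\[
\big(cU+\chi UP'\big)'=-\,U(1-U).
\]
Since the right-hand side is continuous and $cU+\chi UP'$ is continuous, the function $\Phi:=cU+\chi UP'=(c+\chi P')\,U$ is in fact $C^1$ and satisfies $\Phi'=-U(1-U)$ classically.

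From this identity the sign and the regularity both follow. Because $0\le U\le1$ we have $\Phi'=-U(1-U)\le0$, so $\Phi$ is nonincreasing; moreover $\Phi(z)=(c+\chi P'(z))\,U(z)\to c\cdot0=0$ as $z\to+\infty$. Hence $\Phi\ge0$ on $\R$, and in fact $\Phi>0$: given any $z$, positivity of $U$ and the limit $U(+\infty)=0$ furnish $z'>z$ with $0<U(z')<1$, so $\Phi'<0$ near $z'$ and therefore $\Phi(z)\ge\Phi(z')>\Phi(+\infty)=0$. Here I use $U(z)>0$ for all $z$, which holds because a continuous profile cannot be sharp (sharp profiles are discontinuous, \cite[Proposition~2.4]{FGM-21B}) while a nonincreasing profile vanishing at a point would be sharp. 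Dividing $\Phi(z)>0$ by $U(z)>0$ gives $c+\chi P'(z)>0$, equivalently $-P'(z)<c/\chi$; together with $0\le-P'$ this is precisely \eqref{2.2}. Finally, rewriting the identity as $U=\Phi/(c+\chi P')$ exhibits $U$ as the quotient of the $C^1$ function $\Phi$ by the $C^1$, strictly positive (hence nonvanishing) function $c+\chi P'$, so $U\in C^1$, as claimed.

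The genuinely delicate step is the passage to the distributional traveling-wave equation from the solution concept of \cite{FGM-21A}: one must justify that a merely continuous profile satisfies $\big(cU+\chi UP'\big)'=-U(1-U)$ in $\mathcal{D}'(\R)$ without presupposing $U\in C^1$, and that the boundary contribution at $+\infty$ vanishes (which is why the decay $U(+\infty)=0$ and the boundedness of $c+\chi P'$ are needed). Once this identity is secured, the monotonicity and positivity argument, and the $C^1$ bootstrap, are routine.
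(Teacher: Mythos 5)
The paper does not actually prove Proposition \ref{PRO2.3}: it is quoted verbatim from \cite[Proposition 2.4]{FGM-21B}, so there is no in-paper argument to compare yours against. Judged on its own terms, your proof is essentially sound and is a sensible reconstruction. The regularity and sign of $P$ via the kernel $\frac{1}{2\sigma}e^{-|y|/\sigma}$, the monotone-limit argument giving $\Phi=(c+\chi P')U\geq 0$ with strict positivity, the division by $U>0$ to get $c+\chi P'>0$, and the $C^1$ bootstrap $U=\Phi/(c+\chi P')$ with $c+\chi P'\in C^1$ (since $P''=(P-U)/\sigma^2$ is continuous) are all correct. Two points deserve attention. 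First, as you yourself flag, the identity $\bigl(cU+\chi UP'\bigr)'=-U(1-U)$ in $\mathcal{D}'(\R)$ is the load-bearing step: the solution concept of \cite{FGM-21A} is built on the non-conservative form \eqref{1.3}, whose term $\chi\,\partial_x u\,\partial_x p$ has no obvious meaning for merely continuous $U$, so the passage to the conservation form for a weak/mild solution is exactly what must be justified and is only asserted here. Second, your positivity $U>0$ is imported from the ``sharp $\Rightarrow$ discontinuous'' half of the \emph{same} cited Proposition 2.4 of \cite{FGM-21B}; as an external citation this is admissible in the present paper, but it risks circularity if one is reconstructing the original proof, where the two assertions are presumably established together. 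Note that if ``decreasing'' in Definition \ref{DEF2.1} is read as strictly decreasing, then $U(x)>\lim_{z\to+\infty}U(z)=0$ is immediate and this dependence disappears. With those caveats, the argument goes through.
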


\medskip
\noindent \textbf{Transforming $U(x)$ into $\widehat{U}(x)=U(-x)$: } {In order to work with increasing functions rather than decreasing functions, we reverse the space variable.} By Definition \ref{DEF2.1} and Lemma \ref{LEM2.2}, we get the following traveling wave problem
\begin{equation}\label{2.3}
-\left(c+\chi P'(x)  \right)  U'(x)=U(x) \left( \left( 1+\frac{\chi}{\sigma^{2}} P(x) \right)-\left(1+ \frac{\chi}{\sigma^{2}}  \right)U(x) \right), \forall x \in \mathbb{R},
\end{equation}
where
\begin{equation}\label{2.4}
P( x)-\sigma^{2} P''( x)=U(x), \forall  x \in \mathbb{R}.
\end{equation}
Equation \eqref{2.3} has the following behavior at $\pm\infty$:
	$$
	\lim_{x\rightarrow-\infty}U(x) = 1,\quad \lim_{x\rightarrow +\infty}U(x) = 0.
	$$
Now, let us perform the change of variables to reverse the space direction. Setting $\widehat{U}(x) = U(-x)$, and $\widehat{P}(x) = P(-x)$, then equations \eqref{2.3} and \eqref{2.4} become
\begin{align}\label{2.5}
\left(c-\chi \widehat{P}'(x)  \right)\widehat{U}'(x)&=\widehat{U}(x)\left( \left( 1+\frac{\chi}{\sigma^{2}} \widehat{P}(x) \right)-\left(1+ \frac{\chi}{\sigma^{2}} \right)\widehat{U}(x) \right), \forall  x \in \mathbb{R},
\end{align}
where
\begin{equation*}
\widehat{P}( x)-\sigma^{2} \widehat{P}''( x)=\widehat{U}(x), \forall  x \in \mathbb{R}.
\end{equation*}
Assume that $c+\chi P'(x) >0, \forall x \in \R,$ then we have $c-\chi \widehat{P}'(x) >0, \forall x \in \R$ by using $\widehat{P}(x) = P(-x)$.

\medskip
For convenience, we drop the hat notation, and system \eqref{2.5} becomes a logistic equation
\begin{align}\label{2.6}
U'(x)&=\lambda(x) \, U(x) -\kappa(x) \, U^2(x), \forall  x \in \mathbb{R},
\end{align}
where
\begin{equation}\label{2.7}
\lambda(x):=\frac{1+\frac{\chi}{\sigma^{2}} P(x)}{c-\chi P'(x)}, \forall  x \in \mathbb{R},
\end{equation}
and
\begin{equation}\label{2.8}
\kappa(x):=\frac{1+ \frac{\chi}{\sigma^{2}}}{c-\chi P'(x)}, \forall  x \in \mathbb{R},
\end{equation}
with $P(x)$ is the unique solution of the elliptic equation
\begin{equation}\label{2.9}
		P(x)-\sigma^{2} P''(x)=U(x), \forall x \in \mathbb{R}.
	\end{equation}
System \eqref{2.6} has the following behavior at $\pm\infty$
\begin{equation}\label{2.10}
\lim_{x\rightarrow-\infty}U(x) = 0,\quad \lim_{x\rightarrow\infty}U(x) = 1.
\end{equation}
\begin{lemma}\label{LEM2.4}
	Assume that  $U:\R \to \R$ is an increasing $C^1$ function.  Then the map $x \to P(x)$ solving the elliptic equation
	\begin{equation*}
		P( x)-\sigma^{2} P''( x)=U(x), \forall x \in \mathbb{R},
	\end{equation*}
	 is an increasing $C^3$ function, and we have the following estimation of the first derivative of $P(x)$
	\begin{equation}\label{2.11}
 \sup_{x \in \R} P'(x)  \leq  \sup_{x \in \R} U'(x).
\end{equation}
\end{lemma}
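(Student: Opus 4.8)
The plan is to exploit the explicit convolution representation of $P$ established in the proof of Lemma~\ref{LEM2.2}. Writing $G(y):=\frac{1}{2\sigma}e^{-|y|/\sigma}$, equation \eqref{2.1} gives $P=G*U$, i.e.
\begin{equation*}
P(x)=\frac{1}{2\sigma}\int_{\R} e^{-|y|/\sigma}\,U(x-y)\,\dy,
\end{equation*}
where the kernel $G$ is nonnegative and satisfies $\int_{\R}G(y)\,\dy=1$. Since $U$ is bounded (it is increasing with finite limits) and $U'$ is continuous, I would differentiate under the integral sign to obtain $P'=G*U'$, that is
\begin{equation*}
P'(x)=\frac{1}{2\sigma}\int_{\R} e^{-|y|/\sigma}\,U'(x-y)\,\dy.
\end{equation*}
The differentiation is justified by a standard dominated-convergence argument, using that $G$ is integrable and $U'$ is locally bounded and continuous.

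From this representation both claims follow at once. First, because $U$ is increasing we have $U'\ge 0$, and since the kernel is strictly positive the integrand is nonnegative; hence $P'(x)\ge 0$ for every $x$, so $P$ is increasing. Second, bounding $U'(x-y)$ by $\sup_{z\in\R}U'(z)$ and using that $G$ has unit mass gives
\begin{equation*}
P'(x)\le \Big(\sup_{z\in\R}U'(z)\Big)\frac{1}{2\sigma}\int_{\R} e^{-|y|/\sigma}\,\dy=\sup_{z\in\R}U'(z),
\end{equation*}
for every $x\in\R$; taking the supremum over $x$ yields \eqref{2.11}. If $\sup_z U'(z)=+\infty$ the inequality is trivial, so only the finite case actually needs this estimate.

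It remains to upgrade the regularity to $C^3$, and here I would use the elliptic equation itself rather than the convolution, by a bootstrap. Rewrite \eqref{2.9} as $P''=\sigma^{-2}(P-U)$. Since $G$ is integrable and $U$ is bounded and continuous, $P$ is bounded and continuous; then $P''=\sigma^{-2}(P-U)$ is continuous, so $P\in C^2$. Feeding this back, $P-U\in C^1$ (as $U\in C^1$ and now $P\in C^2\subset C^1$), hence $P''\in C^1$ and therefore $P\in C^3$.

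The argument is short, and the conceptual point is simply that $P$ is a weighted average of translates of $U$ against a probability kernel, so $P'$ is the same average of $U'$ and cannot exceed the pointwise supremum of $U'$. The only steps requiring any care are the justification of differentiation under the integral sign and the regularity bootstrap; both are routine, so I do not anticipate a genuine obstacle.
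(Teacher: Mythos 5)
Your proof is correct and follows essentially the same route as the paper: the authors' entire argument is the identity $0 \leq P'(x)=\frac{1}{2\sigma}\int_{\R}e^{-|y|/\sigma}U'(x-y)\,\dd y\leq \sup_{x\in\R}U'(x)$, i.e.\ exactly your observation that $P'$ is the average of $U'$ against a unit-mass nonnegative kernel. Your added justification of differentiation under the integral sign and the $C^3$ bootstrap via $P''=\sigma^{-2}(P-U)$ fill in details the paper leaves implicit, but they do not change the approach.
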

\begin{proof}  The result follows the following inequality
	\begin{equation}\label{2.12}
		0 \leq 	P'(x)=\frac{1}{2 \sigma} \int_{\mathbb{R}} \ee^{-\frac{|y|}{\sigma}}U'(x-y) \dy \leq  \sup_{x \in \R} U'(x).
	\end{equation}
\end{proof}
\begin{lemma}\label{LEM2.5}
    Assume that $c\geq\sqrt{\chi\left(1+\frac{\chi}{\sigma^2}\right)}$, 
    \begin{equation}\label{eq:CU}
	0\leq U'(x)\leq {C_U:=\dfrac{c+\sqrt{c^2-\chi\left(1+\frac{\chi}{\sigma^2}\right)}}{2\chi}}, \forall x \in \mathbb{R},
    \end{equation}
and
$$
    0<u_0<\frac{\sigma^2}{2 \left( \sigma^2+\chi\right)}{\left(1-\sqrt{1-\frac{\chi}{c^2}\left(1+\frac{\chi}{\sigma^2}\right)}\right)}.
$$
Then  $U:\R \to \R$ is an increasing $C^1$ function  satisfying  \eqref{2.6}, \eqref{2.10} and
\begin{equation}\label{2.13}
U(0)=u_0,
\end{equation}
which is given by the following formula
\begin{equation*}
U(x)=\frac{u_0\ee^{\int_{0}^{x}\lambda(s)\ds}}{1+u_0\int_{0}^{x}\kappa(s)\ee^{\int_{0}^{s}\lambda(\tau)\dl}\ds}, \forall x \in \R,
\end{equation*}
where $\lambda(x)$, $\kappa(x)$, and $P(x)$ are given by equations \eqref{2.7}, \eqref{2.8}, and \eqref{2.9} above.
\end{lemma}
\begin{proof}
Let us prove that the formula
\begin{align}\label{2.14}
\frac{u_0\ee^{\int_{0}^{x}\lambda(s)\ds}}{1+u_0\int_{0}^{x}\kappa(s)\ee^{\int_{0}^{s}\lambda(l)\dl}\ds},
\end{align}
is well defined for all $x \in \R$. So let us prove that if $0<u_0<\frac{\sigma^2}{2 \left( \sigma^2+\chi\right)}{\left(1-\sqrt{1-\frac{\chi}{c^2}\left(1+\frac{\chi}{\sigma^2}\right)}\right)}$, then we have
$$
1-u_0\int_{-\infty}^{0}\kappa(s)\ee^{-\int_{s}^{0}\lambda(l)\dl}\ds>0.
$$
Indeed, since by assumption
\begin{equation*}
0\leq U'(x)\leq C_U, \; \forall x\in\R,
\end{equation*}
and Lemma \ref{LEM2.4}, we deduce that
\begin{equation}\label{2.15}
		0 \leq P'(x)\leq  \sup_{x \in \R}U'(x)\leq C_U,\;  \forall x\in\R,
	\end{equation}
hence
$$
    c-\chi P'(x)\geq c-\chi C_U=\dfrac{c-\sqrt{c^2-\chi\left(1+\frac{\chi}{\sigma^2}\right)}}{2}>0,\; \forall x\in\R,
$$
and since $P'(x)\geq 0$, we deduce that 
\begin{equation}\label{2.16}
\frac{1}{c} \leq \frac{1}{c-\chi P'(x)}\leq\dfrac{2}{c-\sqrt{c^2-\chi\left(1+\frac{\chi}{\sigma^2}\right)}}, \; \forall x\in\R.
\end{equation}
Now by combining \eqref{2.16}, and $0\leq P(x)\leq 1,$ for any $x\in\R$, we deduce that
\begin{equation}\label{2.17}
    \frac{1}{c}\leq \lambda(x)\leq\dfrac{2\left(1+\frac{\chi}{\sigma^2}\right)}{c-\sqrt{c^2-\chi\left(1+\frac{\chi}{\sigma^2}\right)}}\text{ and }0<\frac{1}{c}\left(1+\frac{\chi}{\sigma^2}\right) \leq \kappa(x)\leq\dfrac{2\left(1+\frac{\chi}{\sigma^2}\right)}{c-\sqrt{c^2-\chi\left(1+\frac{\chi}{\sigma^2}\right)}},  \forall x\in\R.
\end{equation}
Define
$$
G:=1-u_0\int_{-\infty}^{0}\kappa(s)\ee^{-\int_{s}^{0}\lambda(l)\dl}\ds.
$$
Using \eqref{2.17}, we have that
\begin{equation}\label{2.18}
\begin{aligned}
G&\geq1-u_0 \displaystyle \int_{-\infty}^{0}\dfrac{2\left(1+\frac{\chi}{\sigma^2}\right)}{c-\sqrt{c^2-\chi\left(1+\frac{\chi}{\sigma^2}\right)}}\ee^{- \textstyle\int_{s}^{0}\frac{1}{c}\dl}\ds\\
&=1-u_0\dfrac{2\left(1+\frac{\chi}{\sigma^2}\right)}{c-\sqrt{c^2-\chi\left(1+\frac{\chi}{\sigma^2}\right)}} \displaystyle \int_{-\infty}^{0}\ee^{\frac{s}{c}}\ds\\
&=1-u_0\dfrac{2c\left(1+\frac{\chi}{\sigma^2}\right)}{c-\sqrt{c^2-\chi\left(1+\frac{\chi}{\sigma^2}\right)}}>0,
\end{aligned}
\end{equation}
by using the assumption
$$
    0<u_0<\frac{\sigma^2}{2(\sigma^2+\chi)}\left(1-\sqrt{1-\frac{\chi}{c^2}\left(1+\frac{\chi}{\sigma^2}\right)}\right)=\dfrac{c-\sqrt{c^2-\chi\left(1+\frac{\chi}{\sigma^2}\right)}}{2c\left(1+\frac{\chi}{\sigma^2}\right)}.
$$
\end{proof}

\section{The relationship between the fixed point and traveling waves}
\label{Section3}
\noindent

\begin{definition}\label{DEF3.1}
	Let $\mathcal{A}$ be the set of all admissible function $U:\R\rightarrow[0, 1]$ satisfying
\begin{itemize}
  \item [{\rm (i) }] $U \in C^1(\R)$;
  \item [{\rm (ii) }] $0 \leq U(x) \leq 1, \forall x \in \mathbb{R}$;
  \item [{\rm (iii) }] $0\leq U'(x)\leq{C_U=\dfrac{c+\sqrt{c^2-\chi\left(1+\frac{\chi}{\sigma^2}\right)}}{2\chi}}, \forall x \in \mathbb{R}$.
\end{itemize}
\end{definition}
{Note that the upper bound in (iii) is the same as in the statement of Lemma \ref{LEM2.5} \eqref{eq:CU}}.

For each $U\in \mathcal{A}$, we define
\begin{equation}\label{3.1}
\mathcal{T}(U)(x):=V(x), \forall  x \in \mathbb{R},
\end{equation}
where
\begin{equation}\label{3.2}
V(x)=\frac{u_0\ee^{\int_{0}^{x}\lambda(s)\ds}}{1+u_0\int_{0}^{x}\kappa(s)\ee^{\int_{0}^{s}\lambda(l)\dl}\ds}, \forall  x \in \mathbb{R},
\end{equation}
with
\begin{equation}\label{3.3}
0<u_0<\frac{\sigma^2}{2 \left( \sigma^2+\chi\right)}\left(1-\sqrt{1-\frac{\chi}{c^2}\left(1+\frac{\chi}{\sigma^2}\right)}\right),
\end{equation}
and
\begin{equation}\label{3.4}
\lambda(x)=\frac{1+\frac{\chi}{\sigma^{2}}P(x) }{c-\chi P'(x)}, \forall  x \in \mathbb{R},
\end{equation}
and
\begin{equation}\label{3.5}
\quad\kappa(x)=\frac{1+ \frac{\chi}{\sigma^{2}}}{c-\chi P'(x)}, \forall  x \in \mathbb{R},
\end{equation}
and $P(x)$ is the unique solution of the elliptic equation
\begin{equation}\label{3.6}
		P(x)-\sigma^{2} P''(x)=U(x), \forall x \in \mathbb{R}.
	\end{equation}
\begin{assumption} \label{ASS3.2}
	We assume that
	$$
	c\geq {\sqrt{\chi\left( 1+\frac{\chi}{\sigma^2} \right)}}\text{ and }
	0<u_0<{\frac{\sigma^2}{2 \left( \sigma^2+\chi\right)}\left(1-\sqrt{1-\frac{\chi}{c^2}\left(1+\frac{\chi}{\sigma^2}\right)}\right)}.
	$$
\end{assumption}

\begin{lemma}[Invariance of $\mathcal{A}$ by $\mathcal{T}$]\label{LEM3.3}
Let Assumption \ref{ASS3.2} be satisfied. Let $\mathcal{T}$ be the map defined by \eqref{3.1}. Then
$$\mathcal{T}(\mathcal{A})\subset \mathcal{A}.$$
\end{lemma}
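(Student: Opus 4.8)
The plan is to verify the three defining properties of $\mathcal{A}$ for $V=\mathcal{T}(U)$. The starting point is that, for a given $U\in\mathcal{A}$, the function $V$ in \eqref{3.2} is exactly the explicit solution of the Bernoulli equation
\begin{equation*}
V'(x)=\lambda(x)\,V(x)-\kappa(x)\,V(x)^2,\qquad V(0)=u_0,
\end{equation*}
with $\lambda,\kappa,P$ frozen and built from $U$. First I would record the pointwise bounds available for free: since $0\le U\le 1$, the representation $P(x)=\frac{1}{2\sigma}\int_{\R}e^{-|y|/\sigma}U(x-y)\dy$ gives $0\le P\le 1$; since $U$ is increasing, Lemma \ref{LEM2.4} gives $0\le P'\le\frac{c}{2\chi}$, hence $c-\chi P'\ge\frac{c}{2}>0$ and the bounds \eqref{2.17}. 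In particular the denominator in \eqref{3.2} is positive for every $x$ by the computation in the proof of Lemma \ref{LEM2.5}, so $V$ is well defined and of class $C^1$ (property (i)), and $V>0$. Moreover, since the numerator $u_0 e^{\int_0^x\lambda}$ tends to $0$ as $x\to-\infty$ (because $\lambda\ge\frac1c$) while the denominator tends to $G>0$, we get $V(x)\to 0$ as $x\to-\infty$.

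The heart of the argument is the bound $V\le K$, where
\begin{equation*}
K(x):=\frac{\lambda(x)}{\kappa(x)}=\frac{1+\frac{\chi}{\sigma^2}P(x)}{1+\frac{\chi}{\sigma^2}}\in\left[\tfrac{1}{1+\chi/\sigma^2},\,1\right]
\end{equation*}
is the instantaneous carrying capacity. Rewriting the ODE as $V'=\kappa\,V\,(K-V)$, the sign of $V'$ equals that of $K-V$, so $V\le K$ would give at once the upper bound $V\le 1$ in (ii) and the lower bound $V'\ge 0$ in (iii). To prove $V\le K$ I would use an invariant-region argument whose key input is that $K$ is non-decreasing, since $K'=\frac{\chi/\sigma^2}{1+\chi/\sigma^2}P'\ge 0$. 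At the base point $V(0)=u_0<\frac{1}{2(1+\chi/\sigma^2)}<\min_x K$, so $V(0)<K(0)$. Suppose the open set $\{V>K\}$ were nonempty and let $(a,b)$ be a connected component. The case $a=-\infty$ is excluded because $V\to 0$ at $-\infty$ while $K\ge\frac{1}{1+\chi/\sigma^2}>0$. For finite $a$ one has $V(a)=K(a)$; on $(a,b)$ we have $V>K>0$, hence $V'=\kappa V(K-V)<0$, so $V$ is strictly decreasing there while $K$ is non-decreasing, giving $V(x)<V(a)=K(a)\le K(x)$ for $x\in(a,b)$, a contradiction. Thus $\{V>K\}=\emptyset$, i.e. $0<V\le K\le 1$ and $V'\ge 0$ on $\R$.

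It then remains to bound $V'$ from above. Since $V'=\lambda V-\kappa V^2\le\max_{w\in\R}(\lambda w-\kappa w^2)=\frac{\lambda^2}{4\kappa}=\frac{\lambda}{4}K\le\frac{\lambda}{4}$, and $\lambda\le\frac{2}{c}\bigl(1+\frac{\chi}{\sigma^2}\bigr)$ by \eqref{2.17}, we obtain $V'\le\frac{1}{2c}\bigl(1+\frac{\chi}{\sigma^2}\bigr)$. The hypothesis $c\ge 2\sqrt{\chi(1+\chi/\sigma^2)}$ is equivalent to $\chi(1+\chi/\sigma^2)\le c^2$, hence to $\frac{1}{2c}(1+\chi/\sigma^2)\le\frac{c}{2\chi}$, which yields $0\le V'\le\frac{c}{2\chi}$, i.e. property (iii). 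Collecting (i), (ii), (iii) shows $V\in\mathcal{A}$.

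The only delicate step is $V\le K$. It is tempting to argue that a logistic trajectory stays below its carrying capacity, but with $x$-dependent coefficients this is false in general; what makes it work here is the monotonicity $K'\ge 0$, itself a consequence of $U$ being increasing (condition (iii) of $\mathcal{A}$ together with Lemma \ref{LEM2.4}). This is why the admissible class is built from increasing profiles, and it is the place where I would be most careful — in particular in disposing of the unbounded component $a=-\infty$ through the decay $V(x)\to 0$ established in the first step. The remaining $V'$ bound is routine and is exactly where the speed threshold $c\ge 2\sqrt{\chi(1+\chi/\sigma^2)}$ enters.
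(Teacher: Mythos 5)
Your proof is correct, and it reaches the conclusion by a genuinely different route for the central step. Where the paper proves $0<V\leq 1$ by the change of variables $\tau'(x)=c-\chi P'(\tau(x))$, which turns \eqref{3.10} into the differential inequality \eqref{3.14} with the constant carrying capacity $1$ and then invokes the comparison principle, and where it proves the sign of $V'$ by a separate computation (writing $V'$ as the quotient \eqref{3.22}, integrating $\lambda(s)\ee^{-\int_s^x\lambda}$ exactly, and bounding $\Lambda_1(x)$ from below using $u_0<\frac{\sigma^2}{\sigma^2+\chi}$), you obtain both facts at once from the single invariant-region statement $V\leq K:=\lambda/\kappa$, proved by a first-crossing argument that uses the monotonicity of $K$ (equivalently of $P$, hence of $U$) and the decay $V(x)\to 0$ as $x\to-\infty$. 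Both arguments hinge on the same structural input --- $U$ increasing implies $P$ increasing --- but your version is shorter and makes the mechanism more transparent; the price is that you only get $V'\geq 0$ rather than the strict inequality \eqref{3.27} that the paper establishes and later quotes in the proof of Theorem \ref{TH1.3} (for the lemma as stated, with $\mathcal A$ defined by $0\leq U'\leq \frac{c}{2\chi}$, the non-strict inequality suffices; strictness could be recovered by noting that $V(x_0)=K(x_0)$ forces $u_0$-independent contradictions, or by the paper's $\Lambda_1$ computation). Your bound $V'\leq \lambda^2/(4\kappa)\leq \frac{1}{2c}\left(1+\frac{\chi}{\sigma^2}\right)$ is a factor of $4$ sharper than the paper's \eqref{3.19} and would in fact close Step 3 under the weaker condition $c\geq\sqrt{\chi(1+\chi/\sigma^2)}$; only note that $c\geq 2\sqrt{\chi(1+\chi/\sigma^2)}$ is not \emph{equivalent} to $\chi(1+\chi/\sigma^2)\leq c^2$ but implies it, which is all you need.
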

\begin{proof}
    We divide the proof in {three steps}. \\
	\textbf{Step 1.} We prove that $V=\mathcal{T}(U)\in C^1(\R)$. Indeed, $V$ is continuously differentiable 
	and
\begin{equation}\label{3.7}
V'(x)=\frac{\lambda(x) u_0\ee^{\int_{0}^{x}\lambda(s)\ds} (1+u_0\int_{0}^{x}\kappa(s)\ee^{\int_{0}^{s}\lambda(l)\dl}\ds) - \kappa(x) \, (u_{0}\ee^{\int_{0}^{x}\lambda(s)\ds})^2}{(1+u_0\int_{0}^{x}\kappa(s)\ee^{\int_{0}^{s}\lambda(l)\dl}\ds)^2},
\end{equation}
hence
\begin{equation}\label{3.8}
	V'(x)= \lambda(x)V(x)- \kappa(x)V^2(x), \forall  x \in \mathbb{R}.
\end{equation}
It follows from the definitions of $\lambda(x)$ and $\kappa(x)$ (see \eqref{3.4} and \eqref{3.5}), that $\lambda(x)$ and $\kappa(x)$ are continuously differentiable. Therefore, we have
$$
V \in C^1(\R).
$$
\textbf{Step 2.} We prove that $0< V(x)\leq1$, $\forall x\in\R$. By \eqref{2.11} and $U\in\mathcal{A}$, we have that
\begin{equation*}
0\leq \sup_{x \in \R} P'(x)  \leq  \sup_{x \in \R} U'(x)\leq C_U.
\end{equation*}
Therefore, we have $c-\chi P'(x)>0$. Recall that
\begin{equation*}
V(x)=\frac{u_0\ee^{\int_{0}^{x}\lambda(s)\ds}}{1+u_0\int_{0}^{x}\kappa(s)\ee^{\int_{0}^{s}\lambda(l)\dl}\ds}, \forall  x \in \mathbb{R}.
\end{equation*}
By using \eqref{3.3} we know that
\begin{equation}\label{3.9}
1+u_0\int_{0}^{x}\kappa(s)\ee^{\int_{0}^{s}\lambda(l)\dl}\ds>0, \forall x \in \R.
\end{equation}
Therefore, by definition of $V(x)$,  we have that $V(x)>0, \forall x\in\R$. On the other hand, by using \eqref{3.8}, we have that
\begin{align}\label{3.10}
V'(x)&=\frac{1}{c-\chi P'(x)}V(x) \left( \left( 1+\frac{\chi}{\sigma^{2}} P(x)\right)-\left(1+ \frac{\chi}{\sigma^{2}} \right)V(x) \right), \; \forall x \in \mathbb{R},
\end{align}
and
\begin{equation*}
P(x)=\frac{1}{2 \sigma} \int_{\mathbb{R}}\ee^{-\frac{|x-y|}{\sigma}}U(y) \mathrm{d} y=\frac{1}{2 \sigma} \int_{\mathbb{R}} \ee^{-\frac{|y|}{\sigma}} U(x-y) \mathrm{d}y, \; \forall x \in \mathbb{R}. 
\end{equation*}
Since $0\leq U(x)\leq1,$ $\forall x\in\R,$ we have
\begin{equation}\label{3.11}
0\leq P(x)\leq1, \forall x\in\R.
\end{equation}
{
We deduce that for $x \in \mathbb{R}$
\begin{align}\label{3.14}
V'(x)&\leq\frac{1+\frac{\chi}{\sigma^{2}}}{c-\chi P'(x)}V(x) \big( 1-V(x) \big), \; \forall x \in \mathbb{R},
\end{align}
By \eqref{3.14} and the comparison principle, we have that
\begin{equation} \label{3.15}
	0\leq V(x)\leq1, \; \forall x\in\R.
\end{equation}
}
\textbf{Step 3.} Let us prove that
$$
    {0< V'(x)\leq C_U}, \; \forall x \in \mathbb{R}.
$$
Since
\begin{equation*}
0\leq U'(x)\leq C_U,  \; \forall x \in \mathbb{R},
\end{equation*}
then we have
\begin{equation}\label{3.16}
		0 \leq P'(x)=\frac{1}{2 \sigma} \int_{\mathbb{R}} e^{-\frac{|y|}{\sigma}}U'(x-y) \dy \leq  \sup_{x \in \R}U'(x)\leq C_U, \; \forall x \in \mathbb{R}. 
	\end{equation}
By \eqref{3.16}, we have that
$$
c-\chi P'(x)\geq c-\chi C_U, \; \forall x \in \mathbb{R}. 
$$
Therefore we deduce
\begin{equation}\label{3.17}
    0<\frac{1}{c-\chi P'(x)}\leq\frac{1}{c-\chi C_U}, \; \forall x \in \mathbb{R}. 
\end{equation}
Since $0\leq U(x)\leq 1$, and 
\begin{equation*}
P(x)= \frac{1}{2 \sigma} \int_{\mathbb{R}} \ee^{-\frac{|y|}{\sigma}} U(x-y) \dy,\; \forall x \in \R, 
\end{equation*}
we deduce that 
\begin{equation}\label{3.18}
	0\leq P(x) \leq 1, \; \forall x \in \R. 
\end{equation}
Therefore, by using the fact that $0\leq V(x)\leq1$, $\forall x\in\R$,  \eqref{3.10}, \eqref{3.17}, and \eqref{3.18}, {we have $V(x)(1-V(x))\leq 1/4$ and} we deduce that for $x\in\R$
\begin{equation*}
    V'(x) \leq{\frac{1+\frac{\chi}{\sigma^2}}{4(c-\chi C_U)}} .
\end{equation*}
Using the definition of $C_U$ and $c^2\geq\chi\left(1+\frac{\chi}{\sigma^2}\right)$, we have
\begin{align*}
    V'(x)&\leq \frac{1+\frac{\chi}{\sigma^2}}{4(c-\chi C_U)}  = \frac{1+\frac{\chi}{\sigma^2}}{2\left(c-\sqrt{c^2-\chi\left(1+\frac{\chi}{\sigma^2}\right)}\right)} \\ 
    &=\frac{1+\frac{\chi}{\sigma^2}}{2\left(c-\sqrt{c^2-\chi\left(1+\frac{\chi}{\sigma^2}\right)}\right)}\times \dfrac{c+\sqrt{c^2-\chi\left(1+\frac{\chi}{\sigma^2}\right)}}{c+\sqrt{c^2-\chi\left(1+\frac{\chi}{\sigma^2}\right)}} \\
    &=\frac{1+\frac{\chi}{\sigma^2}}{2\left(c^2-c^2+\chi\left(1+\frac{\chi}{\sigma^2}\right)\right)}\left(c+\sqrt{c^2-\chi\left(1+\frac{\chi}{\sigma^2}\right)}\right) \\ 
    & = C_U
\end{align*}
and we finally reach
\begin{equation}\label{3.19}
    V'(x) \leq C_U .
\end{equation}

On the other hand, by using \eqref{3.7}, we have 
\begin{equation}\label{3.22}
\begin{aligned}
V'(x)&=\dfrac{\lambda(x) u_0\ee^{  \int_{0}^{x}\lambda(s)\ds}\left(\displaystyle 1+u_0 \int_{0}^{x}\kappa(s)\ee^{  \int_{0}^{s}\lambda(l)\dl}\ds \right)-\kappa(x)\left(u_{0}\ee^{ \int_{0}^{x}\lambda(s)\ds} \right)^2 }{\displaystyle\left( 1+u_0  \int_{0}^{x}\kappa(s)\ee^{ \int_{0}^{s}\lambda(l)\dl}\ds \right)^2}\\
&=\dfrac{\displaystyle\lambda(x)u_0\ee^{ \int_{0}^{x}\lambda(s)\ds} +\Lambda (x)}{\displaystyle\left( 1+u_0  \int_{0}^{x}\kappa(s)\ee^{  \int_{0}^{s}\lambda(l)\dl}\ds \right)^2},
\end{aligned}
\end{equation}
where
$$
\Lambda (x):=u_{0}^{2}\ee^{ \int_{0}^{x}\lambda(s)\ds} \left[ \lambda(x) \int_{0}^{x}\kappa(s)\ee^{  \int_{0}^{s}\lambda(l)\dl}\ds-\kappa(x)\ee^{ \int_{0}^{x}\lambda(s)\ds}\right]. 
$$
From \eqref{3.22}, to prove $V'(x)>0$ for $x\in\R$, we only need to prove that
$$
\Lambda _1(x):=\lambda(x)u_0\ee^{\int_{0}^{x}\lambda(s)\ds} +\Lambda (x)>0.
$$
Indeed, we have 
\begin{equation}\label{3.23}
\begin{aligned}
\Lambda (x)=\kappa(x) \left( u_{0}\ee^{ \int_{0}^{x}\lambda(s)\ds}  \right)^2 \left(\frac{\lambda(x)   \int_{0}^{x}\kappa(s)\ee^{  -\int_{s}^{x}\lambda(l)\dl}\ds}{\kappa(x)}-1\right).
\end{aligned}
\end{equation}
By using the definitions $\lambda(x)$ and $\kappa(x)$ in \eqref{3.4} and \eqref{3.5}, and the formula \eqref{3.23}, we deduce that
\begin{equation}\label{3.24}
    \Lambda (x)=\kappa(x) \left( u_{0}\ee^{ \int_{0}^{x}\lambda(s)\ds}  \right)^2 \left(  \int_{0}^{x}\frac{1+\frac{\chi}{\sigma^2}P(x)}{c-\chi P'(s)}\ee^{\mathop{\textstyle-   \int_{s}^{x}\frac{1+\frac{\chi}{\sigma^{2}}P(l) }{c-\chi P'(l)}\dl}}\ds-1\right).
\end{equation}
Since by assumption $U$ is increasing, it follows from \eqref{3.16} that $P$ is increasing. Then, for any $s<x$, we have $P(s)\leq P(x)$. We deduce that
\begin{equation*}
\begin{aligned}
    \int_{0}^{x}\frac{1+\frac{\chi}{\sigma^{2}}P(x)}{c-\chi P'(s)}\ee^{\mathop{\textstyle-   \int_{s}^{x}\frac{1+\frac{\chi}{\sigma^{2}}P(l)}{c-\chi P'(l)}\dl}}\ds& \geq \int_{0}^{x}\frac{1+\frac{\chi}{\sigma^{2}}P(s)}{c-\chi P'(s)}\ee^{\mathop{\textstyle-   \int_{s}^{x}\frac{1+\frac{\chi}{\sigma^{2}}P(l)}{c-\chi P'(l)}\dl}}\ds\\
 &=\int_{0}^{x}\dfrac{\d}{\ds}\left(\ee^{-   \int_{s}^{x}\frac{1+\frac{\chi}{\sigma^{2}}P(l)}{c-\chi P'(l)}\dl}\right)\ds\\
    &=1-\ee^{\mathop{\textstyle-   \int_{0}^{x}\frac{1+\frac{\chi}{\sigma^{2}}P(l)}{c-\chi P'(l)}\dl}}\\
 &=1- \ee^{ - \int_{0}^{x}\lambda(s)\ds},
\end{aligned}
\end{equation*}
therefore by combining  \eqref{3.24} and the above inequality, we obtain 
\begin{equation}\label{3.25}
\begin{aligned}
\Lambda (x)&\geq \kappa(x)\left( u_{0}\ee^{ \int_{0}^{x}\lambda(s)\ds}  \right)^2 \left(1- \ee^{ - \int_{0}^{x}\lambda(s)\ds}-1\right)\\
&=-\kappa(x)u_{0}^{2}\ee^{   \int_{0}^{x}\lambda(s)\ds}.
\end{aligned}
\end{equation}
By using \eqref{3.25} and the definitions of $\lambda(x)$ and $\kappa(x)$ for $x\in\R$, we have that
\begin{equation}\label{3.26}
\begin{aligned}
\Lambda _1(x)&=\lambda(x)u_0\ee^{   \int_{0}^{x}\lambda(s)\ds} +\Lambda (x)\\
&\geq \lambda(x)u_0\ee^{   \int_{0}^{x}\lambda(s)\ds}-\kappa(x)u_{0}^{2}\ee^{  \int_{0}^{x}\lambda(s)\ds}  \\
   & =u_0\ee^{  \int_{0}^{x}\lambda(s)\ds}\left[\frac{1+\frac{\chi}{\sigma^{2}}P(x) }{c-\chi P'(x)}-u_0 \frac{1+\frac{\chi}{\sigma^{2}} }{c-\chi P'(x)}\right]\\
   &=\frac{u_0\ee^{  \int_{0}^{x}\lambda(s)\ds}}{c-\chi P'(x)}\left[1+\frac{\chi}{\sigma^{2}}P(x) -u_0 \left(1+\frac{\chi}{\sigma^{2}} \right)\right].
\end{aligned}
\end{equation}
To conclude it remains to recall that by assumption we have 
$$
u_0<{\frac{\sigma^2}{2 \left( \sigma^2+\chi\right)}\left(1-\sqrt{1-\frac{\chi}{c^2}\left(1+\frac{\chi}{\sigma^2}\right)}\right)} < \frac{\sigma^2}{\sigma^2+\chi},
$$
therefore since $P(x)\geq 0$ and $c-\chi P'(x)>0$, we have that
\begin{equation*}
\Lambda _1(x)\geq \left(u_0\ee^{  \int_{0}^{x}\lambda(s)\ds} \right)\frac{1+\frac{\chi}{\sigma^{2} }}{c-\chi P'(x)}\left[ \frac{\sigma^2}{\sigma^2+\chi}-u_0 \right] >0,  \; \forall x \in \R,
\end{equation*}
which implies 
\begin{equation} \label{3.27}
V'(x)>0, \; \forall x \in \R.
\end{equation}
The conclusion of the Step 3 now follows from \eqref{3.19} and  \eqref{3.27}. The proof is completed.
\end{proof}

\medskip 
Let $\eta>0$. Let $\BUC\left( \R \right)$ be the space of bounded and uniformly continuous maps from $\R$ to itself. Define the weighted space of continuous functions 
$$
BUC_\eta  \left( \R\right)=\left\{ U \in C\left( \R\right): x \to e^{-\eta \vert x \vert } U(x) \in \BUC\left( \R\right) \right\}, 
$$
and  the weighted space  $n$-times continuous differentiable functions
$$
BUC_\eta^n \left( \R\right)=\left\{ U \in C^n \left( \R\right):  x \to e^{-\eta \vert x \vert } U^{(k)}(x) \in \BUC\left( \R\right), \forall k=0,\ldots,n \right\},
$$
which is a Banach space endowed with the norm  
\begin{equation}\label{3.28}
	\|U\|_{n,\eta}:=\sup_{x\in\R}\ee^{-\eta |x|}|U(x)|+\sup_{x\in\R}\ee^{-\eta |x|}|U'(x)|+\ldots+\sup_{x\in\R}\ee^{-\eta |x|}|U^{(n)}(x)|. 
\end{equation}
Here we will use the above weighted space of $\BUC^1_\eta \left( \R \right)$ maps to ensure that   
	$$
\mathcal{A} = \left\{ U \in C^1\left( \R\right):	0\leq U(x) \leq 1, \text{ and } 0\leq U'(x)\leq C_U, \forall x \in \mathbb{R} \right\}, 
	$$
is a closed subset of $\BUC^1_\eta \left( \R \right)$. As a consequence, the subset $\mathcal{A}$ equipped with the distance
$$
 d_\eta (U_1,U_2)=\Vert U_1-U_2\Vert_{1,\eta}.
$$
is a complete metric space.

\begin{lemma}[Compactness of $\mathcal{T}$]\label{LEM3.6}
Let Assumption \ref{ASS3.2} be satisfied. Then the set $\overline{ \mathcal{T} \left(\mathcal{A} \right)}$
is a compact subset of the metric space $\mathcal{A}$ equipped with the distance $ d_\eta $. 
\end{lemma}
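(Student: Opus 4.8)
The plan is to establish relative compactness of $\mathcal{T}(\mathcal{A})$ in the complete metric space $(\mathcal{A},d_\eta)$ by a weighted Arzel\`a--Ascoli argument. Concretely, from any sequence $V_n=\mathcal{T}(U_n)$ I will extract a subsequence converging together with its first derivative uniformly on every compact set, and then upgrade this local convergence to convergence in the weighted norm $\|\cdot\|_{1,\eta}$ by exploiting the decay of the weight $\ee^{-\eta|x|}$ at infinity. Since $\mathcal{A}$ is a closed subset of the Banach space $\BUC^1_\eta(\R)$, the limit automatically lies in $\mathcal{A}$, so that every sequence in $\mathcal{T}(\mathcal{A})$ admits a $d_\eta$-convergent subsequence with limit in $\mathcal{A}$; this makes $\overline{\mathcal{T}(\mathcal{A})}$ sequentially compact, hence compact.

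First I would record the uniform zeroth- and first-order bounds. By Lemma \ref{LEM3.3}, every $V=\mathcal{T}(U)$ satisfies $0\leq V\leq 1$ and $0\leq V'\leq \frac{c}{2\chi}$, with constants independent of $U\in\mathcal{A}$. In particular each $V$ is $\frac{c}{2\chi}$-Lipschitz, so $\{V\}$ is uniformly equicontinuous, and since the weight $x\mapsto\ee^{-\eta|x|}$ is bounded and Lipschitz, the weighted family $\{\ee^{-\eta|\cdot|}V\}$ is equicontinuous as well.

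The crux is a uniform Lipschitz bound on the derivatives $V'$, obtained from elliptic regularity for $P$. Writing the elliptic equation \eqref{3.6} as $P''=(P-U)/\sigma^2$ and using $0\leq P\leq 1$, $0\leq U\leq 1$ gives $|P''|\leq \sigma^{-2}$, while Lemma \ref{LEM2.4} and \eqref{3.16} give $0\leq P'\leq \frac{c}{2\chi}$ together with the lower bound $c-\chi P'\geq \frac{c}{2}$. Consequently the coefficients $\lambda$ and $\kappa$ of \eqref{3.4}--\eqref{3.5} are uniformly bounded and uniformly Lipschitz, with constants depending only on $c,\chi,\sigma$. Since $V'=\lambda V-\kappa V^2$ by \eqref{3.8} and $V$ is itself uniformly bounded and uniformly Lipschitz, $V'$ is uniformly bounded and uniformly Lipschitz: there exists $M=M(c,\chi,\sigma)$ with $\sup_{x}|V''(x)|\leq M$ for all $U\in\mathcal{A}$ (here $V\in C^2$, as $\lambda,\kappa\in C^1$). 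Hence both $\{V'\}$ and $\{\ee^{-\eta|\cdot|}V'\}$ are equicontinuous families.

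Finally I would assemble the compactness argument. The tails are controlled uniformly by the weight: since $|V|\leq 1$ and $|V'|\leq \frac{c}{2\chi}$, for every $\varepsilon>0$ there is $R>0$ with $\sup_{|x|\geq R}\ee^{-\eta|x|}(|V(x)|+|V'(x)|)<\varepsilon$ for all $V\in\mathcal{T}(\mathcal{A})$. Given $V_n=\mathcal{T}(U_n)$, the uniform $C^1$ bounds and the equicontinuity of $\{V_n'\}$ coming from the $V''$ bound allow Arzel\`a--Ascoli on each $[-k,k]$; a diagonal extraction then yields a subsequence converging with its derivative uniformly on every compact set to some $V_\infty\in C^1(\R)$. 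The uniform tail estimate promotes this to convergence in $\|\cdot\|_{1,\eta}$: on $[-R,R]$ one uses the local uniform convergence, and on $|x|\geq R$ both the sequence and $V_\infty$ are uniformly small in the weighted norm. As $\mathcal{A}$ is closed in $\BUC^1_\eta(\R)$, we get $V_\infty\in\mathcal{A}$, proving the claim. I expect the uniform Lipschitz (second-derivative) bound on $V'$ to be the main obstacle, since everything else is standard weighted Arzel\`a--Ascoli packaging; its key inputs are the elliptic estimate $|P''|\leq\sigma^{-2}$ and the uniform lower bound $c-\chi P'\geq \frac{c}{2}$, which keep $\lambda,\kappa$ and their derivatives controlled independently of $U$.
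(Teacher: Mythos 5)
Your proposal is correct and follows essentially the same route as the paper: uniform $C^{1}$ bounds from Lemma \ref{LEM3.3}, control of $V''$ through the logistic identity $V'=\lambda V-\kappa V^{2}$ and the regularity of $P$, Arzel\`a--Ascoli with diagonal extraction on the intervals $[-k,k]$, and the decay of the weight $\ee^{-\eta|x|}$ to upgrade local uniform $C^{1}$ convergence to convergence in $\|\cdot\|_{1,\eta}$, with closedness of $\mathcal{A}$ in $\BUC^{1}_{\eta}(\R)$ giving the limit in $\mathcal{A}$. Your explicit derivation of the global bound $\sup_{x}|V''(x)|\leq M(c,\chi,\sigma)$ via $|P''|\leq\sigma^{-2}$ and $c-\chi P'\geq c/2$ is a slightly more explicit justification of the equicontinuity of $\{V_n'\}$ than the paper's statement, but it is the same argument.
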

\begin{proof}
Let $ \left\{ U_n \right\}_{n \geq 0} \subset \mathcal{A}$ be a sequence, and define the corresponding sequence $ \left\{P_n\right\}_{n \geq 0}$ solution of equation \eqref{3.6} where $U$ is replaced by $U_n$. Define the corresponding  sequences $\left\{ \lambda_n \right\}_{n \geq 0} $ and $\left\{ \kappa_n \right\}_{n \geq 0} $ by using \eqref{3.4} and \eqref{3.5} where $P(x)$ is replaced by $P_n(x)$. Denote $V_n=\mathcal{T}(U_n), \forall  n\in\mathbb{N}$.
By Lemma \ref{LEM3.3}, we know that $\mathcal{T}(\mathcal{A})\subset\mathcal{A}$. Therefore we have
\begin{equation} \label{3.29}
0< V_n(x)\leq1 \text{ and }0< V_n'(x)\leq C_U, \; \forall x\in\R.
\end{equation}
Similarly to equation \eqref{3.8} in the proof of Lemma \ref{LEM3.3}, we have that
\begin{equation}  \label{3.30}
	V_n'(x)=\lambda_n(x)V_n(x)- \kappa_n(x)V_n^2(x), \; \forall x\in\R,
\end{equation}
where
$$
\lambda_n(x)=\frac{1+\frac{\chi}{\sigma^{2}}P_n(x) }{c-\chi P_n'(x)} , \; \forall x\in\R,
$$
and
$$\quad\kappa_n(x)=\frac{1+ \frac{\chi}{\sigma^{2}}}{c-\chi P_n'(x)} , \; \forall x\in\R,$$
and $P_n(x)$ is the unique solution of the elliptic equation
\begin{equation*} 
		P_n( x)-\sigma^{2} P_n''( x)=U_n(x), \forall x \in \mathbb{R}.
	\end{equation*}
It follows from Lemma \ref{LEM2.4} that the map $x \to P_n(x)$ solving the above equation is an increasing $C^3$ function.
By \eqref{3.16}, we have that $c-\chi P_n'(x)>0$. Since $U_n(x)\in C^1(\R)$, we have that $\lambda_n(x), \kappa_n(x)\in C^2(\R)$, and $V_n'(x)\in C^1(\R)$. Then, we obtain that $V_n(x)\in C^2(\R).$ Therefore by using \eqref{3.29} and \eqref{3.30}, we deduce that the families $V_n'|_{[-k, k]}$ and $V_n''|_{[-k, k]}$ are uniformly Lipschitz continuous on $[-k,k]$ for each $k\in\mathbb{N}$. Applying Ascoli-Arzel\`{a} theorem, we have that the sets $\{V_n|_{[-k, k]}\}_{n>0}$, and $\{V_n'|_{[-k, k]}\}_{n>0}$ are relatively compact on $[-k, k]$ for each $k\in\mathbb{N}$.

Using a diagonal extraction process, there exists a sub-sequence $n_p$ and a bounded continuous function $V$ such that $V_{n_p}\rightarrow V$ uniformly on every compact subset of $\R$ as $p\rightarrow\infty$. Indeed, recall that $0< V_n(x)\leq1$ and $0< V_n'(x)\leq C_U$, $ \forall x\in\R$. By the Ascoli-Arzel\`{a} theorem, there exists a sub-sequence $\{V_{m_{p}^1}\}_{p\geq0}$ of $V_{n}$ and a function $V_1 \in C^1([-1,1])$ such that
$$
\lim_{p \to \infty } \Vert V_{m_{p}^1}- V_1 \Vert_{C^1([-1,1])}=0.
$$
Now we can extract $\{V_{m_{p}^{2}}\}_{p\geq0}$ a sub-sequence of $\{V_{m_{p}^1}\}_{p\geq0}$, and function $V_2 \in C^1([-2,2])$
$$
\lim_{p \to \infty } \Vert V_{m_{p}^2}- V_2 \Vert_{C^1([-2,2])}=0.
$$
By construction, we will have
$$
V_2(x)=V_1(x), \forall x \in [-1,1].
$$
Replacing eventually $m_{1}^{2}$ by $m_{1}^{1}$, we can assume that $m_{p}^{2}=\{m_{1}^{1}, m_{2}^{2},\cdots,m_{p}^{2},\cdots\}$.  Proceeding by induction, we can find a $\{V_{m_{p}^{k}}\}_{p\geq0}$  a sub-sequence $\{V_{m_{p}^{k-1}}\}_{p\geq0}$, such that
$$
m_{p}^{k}=m_{p}^{k-1}, \forall p=1,\ldots,k-1,
$$
and a function  $V_k \in C^1([-k,k])$ such that
$$
\lim_{p \to \infty } \Vert V_{m_{p}^k}- V_k \Vert_{C^1([-k,k])}=0.
$$
Set
$$
n_p=m_{p}^{p},
$$
and
$$V(x)=V_{n}(x), \text{ for any }x\in[-k, k], \forall k\geq1.$$
Then the sub-sequence $\{V_{n_{p}}\}_{p\geq0}$ converges locally uniformly with respect to the $C^1$-norm, so we can define
$$
V(x)= \lim_{p \to \infty} V_{n_{p}}(x), \forall x \in \R,
$$
and
$$
V'(x)= \lim_{p \to \infty} V_{n_{p}}'(x), \forall x \in \R.
$$
By construction, we will have
\begin{equation}\label{3.31}
0< V(x)\leq1, \text{ and } 0< V'(x)\leq C_U, \forall x \in \R.
\end{equation}
Now, we are ready to show that $\|V_{n_p}-V\|_{1,\eta}\rightarrow 0$ as $p\rightarrow +\infty$.
Let $\varepsilon>0$ be given. Let $k$ be large enough to satisfy 
\begin{equation}\label{3.32}
\ee^{-\eta k}\leq\frac{\varepsilon}{4}\min\left\{1, \frac{1}{C_U}\right\}.
\end{equation}
For all $k$ large enough, since $0< V_{n_p}(x)\leq1$, $0< V_{n_p}'(x)\leq C_U$, $\forall x\in\R$ and \eqref{3.31}, we deduce that  
\begin{equation}\label{3.33}
\begin{aligned}
	\sup_{x\in\R\setminus[-k, k]}\ee^{-\eta |x|}|V_{n_p}(x)-V(x)|&\leq\ee^{-\eta k}\sup_{x\in\R}(|V_{n_p}(x)|+|V(x)|) \\
  &\leq 2\ee^{-\eta k}\\
  &\leq \frac{\varepsilon}{2},
\end{aligned}
\end{equation}
and
\begin{equation}\label{3.34}
\begin{aligned}
\sup_{x\in\R\setminus[-k, k]}\ee^{-\eta
|x|}|V_{n_p}'(x)-V'(x)| & \leq \ee^{-\eta k}\sup_{x\in\R}(|V_{n_p}'(x)|+|V'(x)|) \\
  &\leq 2C_U\ee^{-\eta k}\\
  &\leq \frac{\varepsilon}{2}.
\end{aligned}
\end{equation}
Moreover, since $V_{n_p}$ converges locally uniformly to $V$ and $V_{n_p}'$ converges locally uniformly to $V'$, for any fixed $x\in[-k, k]$, there exists an integer $p_0>0$ such that
\begin{equation}\label{3.35}
 \sup_{x\in[-k, k]}\ee^{-\eta |x|}|V_{n_p}(x)-V(x)|\leq\frac{\varepsilon}{2}, \forall p\geq p_0,
\end{equation}
and
\begin{equation}\label{3.36}
\sup_{x\in[-k, k]}\ee^{-\eta |x|}|V_{n_p}'(x)-V'(x)|\leq\frac{\varepsilon}{2}, \forall p\geq p_0.
\end{equation}
It follows from \eqref{3.33}, \eqref{3.34}, \eqref{3.35} and \eqref{3.36} that for $p \geq p_0$, 
\begin{equation*}
\begin{aligned}
\|V_{n_p}-V\|_{1,\eta}=&\Bigg\{\max\left\{\sup_{x\in\R\setminus[-k, k]}\ee^{-\eta |x|}|V_{n_p}(x)-V(x)|,\sup_{x\in[-k, k]}\ee^{-\eta
|x|}|V_{n_p}(x)-V(x)|\right\}\\
&+\max\left\{\sup_{x\in\R\setminus[-k, k]}\ee^{-\eta |x|}|V_{n_p}'(x)-V'(x)|,\sup_{x\in[-k, k]}\ee^{-\eta
|x|}|V_{n_p}'(x)-V'(x)|\right\}\Bigg\} \\
\leq &  \frac{\varepsilon}{2}+ \frac{\varepsilon}{2}= \varepsilon. 
\end{aligned}
\end{equation*}
Since the above inequality is true for any $\varepsilon>0$, this completes the proof of lemma.
\end{proof}

\vspace{0.5cm}

\medskip 

The most difficult part of the proof of existence of traveling waves is the continuity of the map  $\mathcal{T}: \mathcal{A} \to \mathcal{A}$.  To consider this problem,  we decompose the real line into several intervals $(-\infty, -K]$, $[-K,K]$, and $[K,\infty)$. {Before proving the continuity of $\mathcal T$, we establish the continuity of its components separately. 
\begin{lemma}[Continuity of $P$, $P'$, $\lambda$ and $\kappa$].\label{LEM3.5}
	Let Assumption \ref{ASS3.2} be satisfied.  Assume that $0<\eta<\frac{1}{\sigma}$.  Let $U_1, U_2\in \mathcal{A}$ and define, for $i=1,2,$
	\begin{align*}
		P_i(x) &=\frac{1}{2\sigma} \int_{\mathbb R} e^{-\frac{|x-y|}{\sigma}} U_i(y)\dd y,  &
		 P_i'(x)&=\frac{1}{2\sigma^2} \int_{\mathbb R}-\mathrm{sign}(x-y) e^{-\frac{|x-y|}{\sigma}} U_i(y)\dd y, \\
		 \lambda_i(x)&= \dfrac{1+\frac{\chi}{\sigma^2}P_i(x) }{c-\chi P_i'(x)}, &
		 \kappa_i(x) &= \dfrac{1+\frac{\chi}{\sigma^2}}{c-\chi P_i'(x)} .
	\end{align*}
	There exist continuous functions of $x\in\mathbb R$, $C_P(x)$, $C_\lambda(x)$ and $C_\kappa(x)$ such that, for all $x\in\mathbb R$, 
	\begin{align}
		\label{B.37}	|P_1(x)-P_2(x)| &\leq C_P(x)\Vert U_1-U_2\Vert_{0, \eta},  \\ 
		\label{B.38}	|P_1'(x)-P_2'(x)| &\leq \frac{1}{\sigma}C_P(x)\Vert U_1-U_2\Vert_{0, \eta},  \\ 
		\label{B.39}	|\lambda_1(x)-\lambda_2(x)| &\leq C_\lambda(x)\Vert U_1-U_2\Vert_{0, \eta},  \\ 
		\label{B.40}	|\kappa_1(x)-\kappa_2(x)| &\leq C_\kappa(x)\Vert U_1-U_2\Vert_{0, \eta}.
	\end{align}
	{The functions $C_P(x)$, $C_\lambda(x)$ and $C_\kappa(x)$ do not depend on the particular choice of $U_1\in\mathcal{A}$ and $U_2\in\mathcal{A}$ but only on $\eta$, $\sigma$ and $\chi$.}
\end{lemma}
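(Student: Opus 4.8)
The plan is to reduce everything to a single convolution estimate for the difference $P_1-P_2$, and then propagate it through the algebraic definitions of $P'$, $\lambda$, and $\kappa$. First I would start from the explicit integral representation of $P_i$, subtract, and move the absolute value inside the integral to get
\[
|P_1(x)-P_2(x)|\leq \frac{1}{2\sigma}\int_{\mathbb R} e^{-|x-y|/\sigma}\,|U_1(y)-U_2(y)|\,\dd y.
\]
The key device is to insert the weight by writing $|U_1(y)-U_2(y)|=e^{\eta|y|}\big(e^{-\eta|y|}|U_1(y)-U_2(y)|\big)\leq e^{\eta|y|}\Vert U_1-U_2\Vert_{0,\eta}$. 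This suggests defining
\[
C_P(x):=\frac{1}{2\sigma}\int_{\mathbb R} e^{-|x-y|/\sigma}\,e^{\eta|y|}\,\dd y,
\]
which gives \eqref{B.37} immediately. The finiteness and continuity of $C_P$ is precisely where the hypothesis $0<\eta<\frac1\sigma$ is used: the decay rate $1/\sigma$ of the kernel must dominate the growth rate $\eta$ of the weight for the integral to converge, and under this condition $C_P$ is a well-defined, continuous (indeed smooth) function of $x$.

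The bound \eqref{B.38} for $P'$ is then essentially free. Since $|-\mathrm{sign}(x-y)|\leq 1$ and $\frac{1}{2\sigma^2}=\frac1\sigma\cdot\frac{1}{2\sigma}$, repeating the same computation with one extra factor $1/\sigma$ and the identical kernel integral yields $|P_1'(x)-P_2'(x)|\leq \frac1\sigma C_P(x)\Vert U_1-U_2\Vert_{0,\eta}$.

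For $\lambda$ and $\kappa$ I would treat them as quotients and exploit the uniform bounds furnished by membership in $\mathcal A$. Writing $a_i:=1+\frac{\chi}{\sigma^2}P_i$ and $b_i:=c-\chi P_i'$, Lemma \ref{LEM2.4} together with the constraint $0\leq U_i'\leq \frac{c}{2\chi}$ gives $\frac c2\leq b_i\leq c$, while $0\leq P_i\leq 1$ gives $1\leq a_i\leq 1+\frac{\chi}{\sigma^2}$. The add-and-subtract identity $a_1b_2-a_2b_1=a_1(b_2-b_1)+b_1(a_1-a_2)$ then bounds
\[
|\lambda_1(x)-\lambda_2(x)|\leq \frac{|a_1|\,|b_1-b_2|+|b_1|\,|a_1-a_2|}{|b_1|\,|b_2|}.
\]
Substituting the uniform bounds on $a_i,b_i$, using $|a_1-a_2|=\frac{\chi}{\sigma^2}|P_1-P_2|$ and $|b_1-b_2|=\chi|P_1'-P_2'|$, and invoking \eqref{B.37}--\eqref{B.38}, yields \eqref{B.39} with an explicit continuous $C_\lambda(x)$ proportional to $C_P(x)$. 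The estimate \eqref{B.40} for $\kappa$ is identical but simpler: the numerator $1+\frac{\chi}{\sigma^2}$ is constant, so only the $b_i$ difference contributes, and $C_\kappa(x)$ is again a continuous multiple of $C_P(x)$.

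The whole argument is routine once $C_P$ is in place; the one genuine subtlety---and the single point where $\eta<\frac1\sigma$ is indispensable---is guaranteeing that the weighted convolution integral defining $C_P(x)$ converges and depends continuously on $x$. Everything downstream is elementary quotient algebra carried out with the uniform bounds coming from $\mathcal A$.
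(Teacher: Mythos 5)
Your proposal is correct and follows essentially the same route as the paper: insert the weight $e^{\eta|y|}e^{-\eta|y|}$ into the convolution to define $C_P(x)$ (whose finiteness and continuity rest exactly on $\eta<\frac{1}{\sigma}$, as you note; the paper merely computes the integral explicitly), deduce \eqref{B.38} from the extra factor $\frac{1}{\sigma}$, and then handle $\lambda$ and $\kappa$ by quotient algebra using the uniform bounds $c-\chi P_i'\geq \frac{c}{2}$, $0\leq P_i\leq 1$, $0\leq P_i'\leq\frac{c}{2\chi}$ coming from $\mathcal A$. Your add-and-subtract identity for the numerator is only a cosmetic variant of the paper's expansion of $P_1P_2'-P_2P_1'$, so there is no substantive difference.
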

\begin{proof}
	\noindent\textbf{Step 1:} We show \eqref{B.37}. 
	We have,  for $x>0$:
	\begin{align}
		\nonumber|P_1(x)-P_{2}(x)|&=  \frac{1}{2 \sigma}\left|\int_{-\infty}^{+\infty} \ee^{-\frac{|x-y|}{\sigma}}(U_1(y)-U_{2}(y)) \mathrm{d} y\right| \\
		\nonumber& =\frac{1}{2 \sigma}\left|\int_{-\infty}^{+\infty} \ee^{-\frac{|x-y|}{\sigma}+\eta|y|}\ee^{-\eta |y|}(U_1(y)-U_{2}(y)) \mathrm{d} y\right| \\
		\nonumber&\leq \frac{1}{2\sigma}\int_{\mathbb R} \ee^{ -\frac{|x-y|}{\sigma}+\eta|y|}\dd y\Vert U_1-U_2\Vert_{0, \eta} \\
		\nonumber&=\frac{1}{2\sigma}\Vert U_1-U_2\Vert_{0, \eta} \left(
		\int_{-\infty}^{0} e^{-\frac{x-y}{\sigma}-\eta y}\dd y
		+\int_{0}^x e^{-\frac{x-y}{\sigma}+\eta y} \dd y
		+\int_{x}^{+\infty} \ee^{\frac{x-y}{\sigma}+\eta y}\dd y\right)\\
		\label{A.48}&=\frac{1}{2\sigma}\Vert U_1-U_2\Vert_{0, \eta} \left(\dfrac{1}{\frac{1}{\sigma}-\eta}e^{-\frac{x}{\sigma}} + \dfrac{1}{\frac{1}{\sigma}+\eta}\left(\ee^{\eta x}-e^{-\frac{x}{\sigma}}\right) + e^{\eta x}\dfrac{1}{\frac{1}{\sigma}-\eta}\right),
	\end{align}
	and similarly for $x<0$: 
	\begin{equation}\label{A.49}
		|P_1(x)-P_{2}(x)| \leq \frac{1}{2\sigma}\Vert U_1-U_2\Vert_{0, \eta} \left(\dfrac{1}{\frac{1}{\sigma}-\eta}e^{-\frac{|x|}{\sigma}} + \dfrac{1}{\frac{1}{\sigma}+\eta}\left(\ee^{\eta|x|}-e^{-\frac{1}{\sigma}|x|}\right) + e^{\eta |x|}\dfrac{1}{\frac{1}{\sigma}-\eta}\right).
	\end{equation}
	Rearranging the terms in \eqref{A.48} and \eqref{A.49}  we have 
	\begin{equation}\label{3.56}
		\begin{aligned}
			|P_1(x)-P_2(x)|&\leq \frac{1}{2 \sigma}\left[\left(\frac{1}{\sigma}-\eta\right)^{-1}\left(\ee^{-\frac{|x|}{\sigma}}+\ee^{\eta |x|}\right)+\left(\frac{1}{\sigma}+\eta\right)^{-1}\left(\ee^{\eta |x|}-\ee^{-\frac{|x|}{\sigma}}\right)\right] \|U_1-U_{2}\|_{0,\eta} 
		\end{aligned}
	\end{equation}
	and \eqref{B.37} is proved.\medskip
		
	\noindent\textbf{Step 2:} We show \eqref{B.38}. We have 
	\begin{align}
		\nonumber|P_1'(x)-P_{2}'(x)|&=  \frac{1}{2 \sigma^2}\left|\int_{-\infty}^{+\infty}-\mathrm{sign}(x-y)\ee^{-\frac{|x-y|}{\sigma}}(U_1(y)-U_{2}(y)) \mathrm{d} y\right|\\
		\nonumber& \leq\frac{1}{2 \sigma^2}\int_{-\infty}^{+\infty} \ee^{-\frac{|x-y|}{\sigma}+\eta|y|}\ee^{-\eta |y|}\left|U_1(y)-U_{2}(y)\right| \mathrm{d} y,  
	\end{align}
	so that the exact computations leading to  \eqref{3.56} can be reproduced, and we have
	\begin{equation*}
		|P_1'(x)-P_2'(x)|\leq \frac{1}{\sigma}C_P(x)\Vert U_1-U_2\Vert_{0,  \eta}.
	\end{equation*}
	\eqref{B.38} is proved. \medskip 

	\noindent\textbf{Step 3:} We show \eqref{B.39}.
		It follows from the definitions of  $\lambda_1(x)$ and $\lambda_2(x)$ that, for all $x\in\mathbb R$, 
		\begin{align}
			\nonumber	&|\lambda_2(x)-\lambda_{1}(x)|\\
			\nonumber	&=\left|\frac{1+ \frac{\chi}{\sigma^{2}}P_2(x)}{c-\chi P_2'(x)}-\frac{1+ \frac{\chi}{\sigma^{2}}P_{1}(x)}{c-\chi P_{1}'(x)}\right|\\
			\label{B.44}	&=\frac{\chi}{|c-\chi P_2'(x)||c-\chi P_{1}'(x)|}\left|\frac{c}{\sigma^2}(P_2(x)-P_{1}(x))+P_2'(x)-P_{1}'(x)+\frac{\chi}{\sigma^2}(P_{1}(x)P_2'(x)-P_2(x)P_{1}'(x))\right|.
		\end{align}
		Since by Definition \ref{DEF3.1} we have $U_i'(x)\leq C_U$ ($i=1,2$), then $P_i'(x)=\int_{\mathbb R} \frac{1}{2\sigma}e^{-\frac{|x-y|}{\sigma}} U'(y)\dd y\leq C_U$ ($i=1,2$), therefore
		\begin{equation}\label{B.45}
		    c-\chi P_i'(x) \geq c-\chi C_U=\frac{1}{2}\left(c-\sqrt{c^2-\chi\left(1+\frac{\chi}{\sigma^2}\right)}\right) >0, \qquad i=1,2.
		\end{equation}
		It follows from \eqref{B.44} and \eqref{B.45} that
		\begin{equation*}
			\begin{aligned}
			    |\lambda_2(x)-\lambda_{1}(x)|&\leq\frac{c\chi}{\sigma^2(c-\chi C_U)^2}|P_2(x)-P_{1}(x)|+\frac{\chi}{(c-\chi C_U)^2}|P_2'(x)-P_{1}'(x)| \\ 
			    &\quad +\frac{\chi}{\sigma^2(c-\chi C_U)^2}|P_{1}(x)P_2'(x)-P_2(x)P_{1}'(x)|\\
				&\leq\frac{c\chi}{\sigma^2(c-\chi C_U)^2}|P_2(x)-P_{1}(x)|+\frac{\chi}{(c-\chi C_U)^2}|P_2'(x)-P_{1}'(x)| \\
				&\quad+\frac{\chi}{\sigma^2(c-\chi C_U)^2}\left(P_1(x)|P_2'(x)-P_{1}'(x)|+|P'_1(x)||P_{1}(x)-P_2(x)|\right).
			\end{aligned}
		\end{equation*}
		Using the fact that $0\leq P_1(x)\leq1$ and $0\leq P'_1(x)\leq C_U$ for $x\in\R$, then \eqref{B.39} is a consequence of \eqref{B.37} and \eqref{B.38}.\medskip

	\noindent\textbf{Step 4:} We show \eqref{B.40}. We have:
	\begin{align*}
	\dfrac{1}{1+ \frac{\chi}{ \sigma^2}}	|\kappa_1(x)-\kappa_2(x)|&=\left|\frac{1}{c-\chi P_1'(x)}-\frac{1}{c-\chi P_{2}'(x)}\right|=\left|\dfrac{c-\chi P_1'(x)-c+\chi P_2'(x)}{\big(c-\chi P_2'(x)\big)\big(c-P_1'(x)\big)}\right|\\ 
		&=\chi\left|\dfrac{P_2'(x)-P_1'(x)}{\big(c-\chi P_2'(x)\big)\big(c-P_1'(x)\big)}\right| \leq \dfrac{\chi}{(c-\chi C_U)^2}|P_2'(x)-P_1'(x)|.
	\end{align*}
	Thus \eqref{B.40} is a consequence of \eqref{B.38}. Lemma \ref{LEM3.5} is proved.
\end{proof}
}

\begin{lemma}[Continuity of $\mathcal{T}$]\label{LEM3.7}
	Let Assumption \ref{ASS3.2} be satisfied. Assume that $0<\eta<\frac{1}{\sigma}$. Then the map $\mathcal{T}: \mathcal{A} \to \mathcal{A}$ is continuous on $\mathcal{A}$ endowed with distance $d(U_1,U_2)=\Vert U_1-U_2\Vert_{1,\eta}$.
\end{lemma}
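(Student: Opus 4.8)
The plan is to prove continuity by an $\varepsilon$--$\delta$ argument, exploiting the decomposition $\mathbb{R} = (-\infty,-K]\cup[-K,K]\cup[K,\infty)$ suggested in Figure~\ref{Fig2}. Write $V_i=\mathcal{T}(U_i)$ for $i=1,2$. The crucial structural observation is that the weighted norm makes the two unbounded tails negligible uniformly over $\mathcal{A}$: since $V_1,V_2\in\mathcal{A}$ by Lemma~\ref{LEM3.3}, we have $0\le V_i\le 1$ and $0\le V_i'\le \frac{c}{2\chi}$ everywhere, so on $|x|\ge K$,
\begin{equation*}
 \ee^{-\eta|x|}|V_1(x)-V_2(x)|\le 2\ee^{-\eta K}, \qquad \ee^{-\eta|x|}|V_1'(x)-V_2'(x)|\le \frac{c}{\chi}\ee^{-\eta K}.
\end{equation*}
Hence, given $\varepsilon>0$, I would first fix $K$ so large that both right-hand sides are $\le\varepsilon/2$; this choice is independent of $U_1,U_2$ and disposes of the tails once and for all. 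Note that this step needs only membership in $\mathcal{A}$ and the weight, and not the limits of $V_i$ at $\pm\infty$.

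It then remains to control $V_1-V_2$ and its derivative on the compact interval $[-K,K]$, where the weight satisfies $\ee^{-\eta|x|}\le 1$ and may be dropped. On $[-K,K]$ both $V_i$ solve the logistic ODE \eqref{3.8} with the same initial value $V_i(0)=u_0$, so $W:=V_1-V_2$ solves the linear Cauchy problem
\begin{equation*}
 W'(x)=a(x)\,W(x)+f(x), \qquad W(0)=0,
\end{equation*}
with $a:=\lambda_1-\kappa_1(V_1+V_2)$ and $f:=(\lambda_1-\lambda_2)V_2-(\kappa_1-\kappa_2)V_2^2$. By \eqref{2.17} the coefficients $\lambda_1,\kappa_1$ are bounded, and $0\le V_i\le 1$, so $a$ is bounded on $[-K,K]$ by some $M(K)$. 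For the forcing term, Lemma~\ref{LEM3.5} (estimates \eqref{B.39}--\eqref{B.40}) together with $0\le V_2\le 1$ gives $|f(x)|\le\big(C_\lambda(x)+C_\kappa(x)\big)\Vert U_1-U_2\Vert_{0,\eta}$; since $C_\lambda,C_\kappa$ are continuous they are bounded on $[-K,K]$, whence $\sup_{[-K,K]}|f|\le \widetilde M(K)\,\Vert U_1-U_2\Vert_{0,\eta}$. Variation of constants then yields $\sup_{[-K,K]}|W|\le 2K\,\ee^{2KM(K)}\,\sup_{[-K,K]}|f|\le C_1(K)\,\Vert U_1-U_2\Vert_{0,\eta}$, and feeding this back into the ODE gives $\sup_{[-K,K]}|W'|\le M(K)\sup|W|+\sup|f|\le C_2(K)\,\Vert U_1-U_2\Vert_{0,\eta}$.

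To conclude, for the $K$ fixed above I would choose $\delta=\frac{\varepsilon}{2(C_1(K)+C_2(K))}$. Then $d(U_1,U_2)=\Vert U_1-U_2\Vert_{1,\eta}<\delta$ implies $\Vert U_1-U_2\Vert_{0,\eta}\le\Vert U_1-U_2\Vert_{1,\eta}<\delta$, so the compact contribution to $\Vert V_1-V_2\Vert_{1,\eta}$ is $<\varepsilon/2$; combined with the tail bound this gives $\Vert V_1-V_2\Vert_{1,\eta}<\varepsilon$. The main obstacle is precisely the interplay between the weighted norm and the unbounded domain: a naive Gronwall estimate on all of $\mathbb{R}$ would fail because the constants $C_\lambda(x),C_\kappa(x)$ grow like $\ee^{\eta|x|}$ and the integrating factor $\ee^{\int_0^x\lambda}$ grows exponentially. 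The decomposition is what circumvents this --- the tails are killed by the weight using only the uniform $\mathcal{A}$-bounds, while on the compact piece all constants are finite and a standard Gronwall/variation-of-constants argument applies.
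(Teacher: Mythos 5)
Your proof is correct, and on the decisive step it takes a genuinely different (and arguably cleaner) route than the paper. The outer structure is identical: both arguments kill the tails $|x|\ge K$ using only the uniform bounds $0\le V_i\le 1$, $0\le V_i'\le \frac{c}{2\chi}$ guaranteed by Lemma \ref{LEM3.3} together with the weight $\ee^{-\eta|x|}$, and both reduce everything to the Lipschitz-type estimates \eqref{B.39}--\eqref{B.40} of Lemma \ref{LEM3.5} for $\lambda$ and $\kappa$. The difference is in how $\sup_{[-K,K]}|V_1-V_2|$ is controlled. The paper works directly with the explicit formula \eqref{3.2}: it rewrites the difference of the two quotients, splits it into the terms $H(x)$ and $I(x)$ of \eqref{3.45}--\eqref{3.46}, and estimates differences of exponentials of integrals via Taylor's inequality to get \eqref{A.52} and \eqref{A.56}. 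You instead exploit the fact that both profiles solve the logistic ODE \eqref{3.8} with the \emph{same} value $u_0$ at $x=0$, so that $W=V_1-V_2$ satisfies a linear Cauchy problem $W'=aW+f$, $W(0)=0$, with $a$ bounded by the a priori bounds on $\lambda_1,\kappa_1,V_i$ and $|f|\le(C_\lambda+C_\kappa)\Vert U_1-U_2\Vert_{0,\eta}$; variation of constants on $[-K,K]$ then gives the $C^0$ bound, and substituting back into the ODE gives the $C^1$ bound (this last step is essentially the paper's \eqref{3.75}). Your route avoids the somewhat delicate bookkeeping of $H$ and $I$ at the cost of a Gronwall factor $\ee^{2KM(K)}$, which is harmless since $K$ is fixed before $\delta$ is chosen; your closing remark about why a global Gronwall estimate would fail (exponential growth of $C_\lambda$, $C_\kappa$ and of the integrating factor) correctly identifies the reason the compact/tail decomposition is needed in both proofs. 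The only point worth making explicit is the identity $V_1(0)=V_2(0)=u_0$, which is immediate from \eqref{3.2} and is what makes the homogeneous part of the solution of the Cauchy problem vanish.
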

\begin{proof} Let $U_0 \in \mathcal{A}$ be fixed, and $U \in \mathcal{A}$, and define
	$$
	V_0= \mathcal{T}(U_0) \text{ and }V= \mathcal{T}(U).
	$$  
	
	\medskip 		
	\noindent \textbf{Part A:} We prove that 
	{for each admissible profile  $U_0 \in \mathcal A$ and  $\varepsilon>0$, there is a $\delta_1>0$ such that 
	\begin{equation}\label{B.46}
		\|V-V_0\|_{0,\eta} \leq \frac{\varepsilon}{2},
	\end{equation}
	whenever 
	$$
	\Vert U-U_0\Vert_{0,\eta} \leq \delta_1.
	$$ 
	\medskip

	Let $K>0$ be such that 
	\begin{equation*}
		e^{-\eta K} \leq \frac{\varepsilon}{12}.
	\end{equation*}
	Then since $V\in\mathcal{A}$ by Lemma \ref{LEM3.3}, we have $0\leq V(x)\leq 1$ and $0\leq V_0(x)\leq 1$ for all $x\in\mathbb R$, therefore 
	\begin{align*}
		\Vert V-V_0\Vert_{0, \eta} & = \sup_{x\in\mathbb R} e^{-\eta \vert x \vert }|V(x)-V_0(x)| \\ 
		&\leq \sup_{x\leq -K} e^{-\eta |x|}|V(x)-V_0(x)| + \sup_{|x|\leq K} e^{-\eta |x|}|V(x)-V_0(x)|+ \sup_{x\geq K}e^{-\eta |x|}|V(x)-V_0(x)|\\ 
		&\leq e^{-\eta K}\left(\sup_{x\leq -K}|V(x)-V_0(x)|+\sup_{x\geq K}|V(x)-V_0(x)|\right) + \sup_{|x|\leq K}e^{-\eta |x|}|V(x)-V_0(x)| \\ 
		&\leq 4 e^{-\eta K}+ \sup_{|x|\leq K}e^{-\eta |x|}|V(x)-V_0(x)| \leq \frac{2\varepsilon}{6}+\sup_{|x|\leq K}e^{-\eta |x|}|V(x)-V_0(x)|.
	\end{align*}
	Thus  there remains only to  establish that
	\begin{equation}\label{A.46}
		\sup_{|x|\leq K}e^{-\eta |x|}|V(x)-V_0(x)|\leq \frac{\varepsilon}{6},
	\end{equation}
	if $\Vert U-U_0\Vert_{1, \eta}\leq \delta_1$, for $\delta_1>0$ sufficiently small.
	}
	Recall that
	\begin{equation*}
		V(x)=\dfrac{\displaystyle u_0\exp\left(\int_{0}^{x}\lambda(s)\ds\right)}{\displaystyle 1+u_0\int_{0}^{x}\kappa(s)\exp\left(\int_{0}^{s}\lambda(l)\dl\right)\ds},
	\end{equation*}
	wherein
	\begin{equation*}
		\lambda(x)=\frac{1+\frac{\chi}{\sigma^{2}}P(x) }{c-\chi P'(x)},
	\end{equation*}
	and
	\begin{equation*}
		\quad\kappa(x)=\frac{1+ \frac{\chi}{\sigma^{2}}}{c-\chi P'(x)},
	\end{equation*}
	and $P(x)$ is the unique solution of the elliptic equation
	\begin{equation*}
		P(x)-\sigma^{2} P''(x)=U(x), \forall x \in \mathbb{R}.
	\end{equation*}
	By using the definitions of $V_0(x)$ and $V(x)$ for $x\in\R$, we find that
	\begin{align}
		\nonumber	|V(x)-V_0(x)|&=\left|\frac{u_0\exp\left(\displaystyle\int_{0}^{x}\lambda(s)\ds\right)}{1+u_0\displaystyle\int_{0}^{x}\kappa(s)\exp\left(\displaystyle\int_{0}^{s}\lambda(l)\dl\right)\ds}
		\nonumber	-\frac{u_0\exp\left(\displaystyle\int_{0}^{x}\lambda_0(s)\ds\right)}{1+u_0\displaystyle\int_{0}^{x}\kappa_0(s)\exp\left(\displaystyle\int_{0}^{s}\lambda_0(l)\dl\right)\ds}\right|\\
		\nonumber	&=\frac{u_0}{\left|1+u_0\dis\int_{0}^{x}\kappa(s)\exp\left(\dis\int_{0}^{s}\lambda(l)\dl\right)\ds\right|	\left|1+u_0\dis\int_{0}^{x}\kappa_{0}(s)\exp\left(\dis\int_{0}^{s}\lambda_{0}(l)\dl\right)\ds\right|}\\
		\nonumber	&\quad \times\Bigg|\exp\left(\dis\int_{0}^{x}\lambda(s)\ds\right)-\exp\left(\dis\int_{0}^{x}\lambda_{0}(s)\ds\right)\\
		\nonumber	&\quad \quad+u_{0}\exp\left(\dis\int_{0}^{x}\lambda(s)\ds\right)\dis\int_{0}^{x}\kappa_{0}(s)\exp\left(\dis\int_{0}^{s}\lambda_{0}(l)\dl\right)\ds\\
		\label{3.42}	&\quad \quad-u_{0}\exp\left(\dis\int_{0}^{x}\lambda_{0}(s)\ds\right)\dis\int_{0}^{x}\kappa(s)\exp\left(\dis\int_{0}^{s}\lambda(l)\dl\right)\ds\Bigg|.
	\end{align}
	Since
	\begin{equation*}
		\frac{1}{1+u_0\dis\int_{0}^{x}\kappa_0(s)\exp\left(\dis\int_{0}^{s}\lambda_0(l)\dl\right)\ds}=\frac{V_{0}(x)}{u_0\exp\left(\dis\int_{0}^{x}\lambda_{0}(s)\ds\right)},
	\end{equation*}
	and similarly
	\begin{equation*}
		\frac{1}{1+u_0\dis\int_{0}^{x}\kappa(s)\exp\left(\dis\int_{0}^{s}\lambda(l)\dl\right)\ds}=\frac{V(x)}{u_0\exp\left({\dis\int_{0}^{x}\lambda(s)\ds}\right)},
	\end{equation*}
	we have that
	\begin{align*}
		\nonumber	&|V(x)-V_0(x)|\\
		\nonumber	&=\left|\frac{V(x)V_0(x)}{u_{0}\exp\left({\dis\int_{0}^{x}\lambda(s)\ds}\right)\exp\left({\dis\int_{0}^{x}\lambda_{0}(s)\ds}\right)}\right|
		\nonumber	\Bigg|\exp\left({\dis\int_{0}^{x}\lambda(s)\ds}\right)-\exp\left({\dis\int_{0}^{x}\lambda_{0}(s)\ds}\right) \\
		\nonumber	&\quad +u_{0}\exp\left(\dis\int_{0}^{x}\lambda(s)\ds\right)\dis\int_{0}^{x}\kappa_{0}(s)\exp\left(\dis\int_{0}^{s}\lambda_{0}(l)\dl\right)\ds\\
		\nonumber	&\quad -u_{0}\exp\left(\dis\int_{0}^{x}\lambda_{0}(s)\ds\right)\dis\int_{0}^{x}\kappa(s)\exp\left(\dis\int_{0}^{s}\lambda(l)\dl\right)\ds\Bigg|\\
		\nonumber	&=\dfrac{ \left|V(x)V_0(x)\right|}{u_0}
		\nonumber	\Bigg|\exp\left({\dis-\int_{0}^{x}\lambda_0(s)\ds}\right)-\exp\left({\dis-\int_{0}^{x}\lambda(s)\ds}\right) \\
		\nonumber	&\quad +u_{0}\exp\left(\dis-\int_{0}^{x}\lambda_0(s)\ds\right)\dis\int_{0}^{x}\kappa_{0}(s)\exp\left(\dis\int_{0}^{s}\lambda_{0}(l)\dl\right)\ds\\
	 	&\quad -u_{0}\exp\left(\dis-\int_{0}^{x}\lambda(s)\ds\right)\dis\int_{0}^{x}\kappa(s)\exp\left(\dis\int_{0}^{s}\lambda(l)\dl\right)\ds\Bigg|, 
	\end{align*}
hence 
\begin{equation}  \label{A.43}
|V(x)-V_0(x)| 	\leq \frac{1}{u_0}H(x) + I(x),
\end{equation}
	where 
	\begin{equation}\label{3.45}
		H(x):=\left|\exp\left(-\dis\int_{0}^{x}\lambda_{0}(s)\ds\right)-\exp\left(-\dis\int_{0}^{x}\lambda(s)\ds\right)\right|,
	\end{equation}
	and
	\begin{equation}\label{3.46}
		I(x):=\left|\dis\int_{0}^{x}\kappa_{0}(s)\exp\left(-\dis\int_{s}^{x}\lambda_{0}(l)\dl\right)\ds
		-\dis\int_{0}^{x}\kappa(s)\exp\left(-\dis\int_{s}^{x}\lambda(l)\dl\right)\ds\right|.
	\end{equation}
	We divide the rest of the proof of Part A into two steps, to estimate $H(x)$ and $I(x)$.
	\medskip

	\noindent\textbf{Step 1:} We show that 
	\begin{equation}\label{A.52}
		H(x) \leq C_H(x)\Vert U-U_0\Vert_{0, \eta}, \; \forall x \in \R,
	\end{equation}
	for some continuous function $C_H(x)$ independent of $\varepsilon$, $U$, $U_0$.
	
	\medskip 
	\noindent By Taylor's theorem, we have that
	\begin{equation}\label{3.47}
		|\ee^{A}-\ee^{B}|\leq|A-B|\ee^{\max\{A, B\}},\quad \forall A, B\in\R.
	\end{equation}
	Now we use \eqref{3.47} to estimate $H(x)$ defined in \eqref{3.45}.
	\begin{equation}\label{3.48}
		\begin{aligned}
			H(x)&\leq \left|\int_{0}^{x}\lambda(s)-\lambda_0(s)\ds\right|\exp\left(\dis\max\left\{-\int_{0}^{x}\lambda(s)\ds,-\int_{0}^{x}\lambda_{0}(s)\ds\right\}\right)\\
			&\leq \exp\left(\dis\max\left\{-\int_{0}^{x}\lambda(s)\ds,-\int_{0}^{x}\lambda_{0}(s)\ds\right\}\right)\int_{0}^{x}\left|\lambda(s)-\lambda_0(s)\right|\ds.
		\end{aligned}
	\end{equation}
	Recall from \eqref{B.39} in Lemma \ref{LEM3.5} that there is a continuous function $C_\lambda(x)$ such that 
	\begin{equation*}
		|\lambda(x)-\lambda_0(x)|\leq C_\lambda(x)\Vert U-U_0\Vert_{0, \eta}, \; \forall x \in \R.
	\end{equation*}
	Thus we can rewrite \eqref{3.48} as
	\begin{equation}\label{A.54}
		H(x) \leq \exp\left(-\int_{0}^{x}\lambda(s)\ds,-\int_{0}^{x}\lambda_{0}(s)\ds\right)\int_0^xC_\lambda(s)\dd s \Vert U-U_0\Vert_{0, \eta}.
	\end{equation}
	Next recall the definition of $\lambda(x)$:
	\begin{equation*}
		\lambda(x)=\dfrac{1+\frac{\chi}{\sigma^2}P(x)}{c-\chi P'(x)}, \; \forall x \in \R.
	\end{equation*}
	Since by Definition \ref{DEF3.1} we have $U'(x)\leq C_U$, then $P'(x)=\int_{\mathbb R} \frac{1}{2\sigma}e^{-\frac{|x-y|}{\sigma}} U'(y)\dd y\leq C_U$, therefore
	\begin{equation*}
	    c-\chi P'(x) \geq c-\chi C_U = \frac{1}{2}\left(c-\sqrt{c^2-\chi\left(1+\frac{\chi}{\sigma^2}\right)}\right)>0,  \; \forall x \in \R,
	\end{equation*}
	and therefore 
	\begin{equation*}
	    \lambda(x)=\dfrac{1+\frac{\chi}{\sigma^2}P(x)}{c-\chi P'(x)}\leq \frac{1+\frac{\chi}{\sigma^2}}{c-\chi C_U}, \; \forall x \in \R. 
	\end{equation*}
	Clearly, we have the same upper bound for $\lambda_0(x)$ and $\lambda(x)$, and \eqref{A.54} becomes
	\begin{equation*}
		H(x)\leq \exp\left(\frac{1+\frac{\chi}{\sigma^2}}{c-\chi C_U}|x|\right) \int_{[0,x]}C_\lambda(s)\dd s\Vert U-U_0\Vert_{0, \eta} = C_H(x)\Vert U-U_0\Vert_{0, \eta},  \; \forall x \in \R,
	\end{equation*}
	where $C_H(x)$ is a continuous function. Therefore, \eqref{A.52} is proved. 
	
	\medskip
	\noindent\textbf{Step 2:} We show that 
	\begin{equation}\label{A.56}
		I(x)\leq C_I(x)\Vert U-U_0\Vert_{0, \eta}, \; \forall x \in \R,
	\end{equation}
	for some continuous function $C_I(x)$ independent from $\varepsilon$, $U$, $U_0$.
	
\medskip 
\noindent 	Indeed we have
	\begin{align}
		\nonumber	I(x)&\leq\left|\dis\int_{0}^{x}\kappa_{0}(s)\exp\left(\dis-\int_{s}^{x}\lambda_{0}(l)\dl\right)\ds
				-\dis\int_{0}^{x}\kappa_{0}(s)\exp\left(\dis-\int_{s}^{x}\lambda(l)\dl\right)\ds\right|\\
		\nonumber	&\quad+\left|\dis\int_{0}^{x}\kappa_{0}(s)\exp\left(\dis-\int_{s}^{x}\lambda(l)\dl\right)\ds
				-\dis\int_{0}^{x}\kappa(s)\exp\left(\dis-\int_{s}^{x}\lambda(l)\dl\right)\ds\right|\\
		\nonumber	&=\left|\dis\int_{0}^{x}\kappa_{0}(s)\left(\exp\left(\dis-\int_{s}^{x}\lambda_{0}(l)\dl\right)
		\nonumber	-\exp\left(\dis-\int_{s}^{x}\lambda(l)\dl\right)\right)\ds\right|\\
		\nonumber 	&\quad+\left|\dis\int_{0}^{x}(\kappa_{0}(s)-\kappa(s))\exp\left(\dis-\int_{s}^{x}\lambda(l)\dl\right)\ds\right| \\ 
		\label{3.63}&=:I_1(x)+I_2(x),
	\end{align}
	where 
	\begin{equation} \label{A.58}
		I_1(x):=\left|\dis\int_{0}^{x}\kappa_{0}(s)\left(\exp\left(\dis-\int_{s}^{x}\lambda_{0}(l)\dl\right) -\exp\left(\dis-\int_{s}^{x}\lambda(l)\dl\right)\right)\ds\right|,
	\end{equation}
	and 
	\begin{equation}\label{A.59}
		I_2(x):= \left|\dis\int_{0}^{x}(\kappa_{0}(s)-\kappa(s))\exp\left(\dis-\int_{s}^{x}\lambda(l)\dl\right)\ds\right|.
	\end{equation}
	Using \eqref{3.47}, we rewrite \eqref{A.58} as 
	\begin{align}
		\label{A.60} I_1(x)\leq\int_0^x |\kappa_0(s)|\exp\left(- \max\left\{-\dis\int_{s}^{x}\lambda(l)\dl,-\int_{s}^{x}\lambda_{0}(l)\dl\right\}\right)\int_s^x |\lambda(l)-\lambda_0(l)|\dd l\dd s.
	\end{align}
	Since by Definition \ref{DEF3.1} we have $U'(x)\leq C_U$, then $P'(x)=\int_{\mathbb R} \frac{1}{2\sigma}e^{-\frac{|x-y|}{\sigma}} U'(y)\dd y\leq C_U$, therefore
	\begin{equation*}
	    c-\chi P'(x) \geq c-\chi C_U = \frac{1}{2}\left(c-\sqrt{c^2-\chi\left(1+\frac{\chi}{\sigma^2}\right)}\right)>0,  \; \forall x \in \R,
	\end{equation*}
	and finally
	\begin{equation}\label{A.61} 
	    |\lambda(x)|\leq \frac{1+\frac{\chi}{\sigma^2}}{c-\chi C_U}  
 \text{ and }|\kappa(x)|\leq \frac{1+\frac{\chi}{\sigma^2}}{c-\chi C_U} .
	\end{equation}
	By using \eqref{A.60}, \eqref{A.61} and \eqref{B.39} in Lemma \ref{LEM3.5} we rewrite   as
	\begin{align*}
		I_1(x)\leq  \frac{1+\frac{\chi}{\sigma^2}}{c-\chi C_U} \int_{[0, x]} \exp\left(\frac{1+\frac{\chi}{\sigma^2}}{c-\chi C_U}|x-s|\right) \int_{[s, x]}C_\lambda(l)\dd l \dd s\Vert U-U_0\Vert_{0, \eta}. 
	\end{align*}
	Thus there exists a continuous function $C_{I_1}(x)$ such that 
	\begin{equation}\label{A.62}
		I_1(x)\leq C_{I_1}(x)\Vert U-U_0\Vert_{0, \eta}.
	\end{equation}
	Next we estimate $I_2(x)$ in \eqref{A.59}. By using \eqref{B.39} and \eqref{A.61}, we have
	\begin{align*}
		I_2(x)&\leq \dis\int_{[0, x]}\left|\kappa_{0}(s)-\kappa(s)\right|\exp\left(\frac{1+\frac{\chi}{\sigma^2}}{c-\chi C_U}|x-s|\right)\ds\\
		&\leq \int_{[0, x]}C_\kappa(s)\exp\left(\frac{1+\frac{\chi}{\sigma^2}}{c-\chi C_U}|x-s|\right)\ds\Vert U-U_0\Vert_{0, \eta},
	\end{align*}
	thus there exists a continuous function $C_{I_2}(x)$ such that 
	\begin{equation}\label{A.63}
		I_2(x)\leq C_{I_2}(x)\Vert U-U_0\Vert_{0, \eta}.
	\end{equation}
	Combining \eqref{3.63}, \eqref{A.62} and \eqref{A.63}, there exists a continuous function $C_I(x):=C_{I_1}(x)+C_{I_2}(x)$ such that \eqref{A.56} holds. Step 2 is completed.
	\medskip

	\noindent\textbf{Conclusion of Part A:} By choosing $\delta_1$ such that 
	\begin{equation} \label{D.64}
		\delta_1:=\dfrac{\varepsilon}{6}\left(\dfrac{1}{\dis\sup_{x\in[-K, K]}\frac{1}{u_0}C_H(x)+C_I(x)}\right),
	\end{equation}
	we conclude from \eqref{A.49}, \eqref{A.52} and \eqref{A.56} that indeed 
	\begin{align*}
		\sup_{x\in[-K, K]}e^{-\eta|x|}|V(x)-V_0(x)|&\leq \sup_{x\in[-K, K]}|V(x)-V_0(x)|\leq \sup_{x\in[-K, K]}\frac{1}{u_0}H(x)+I(x) \\ 
		&\leq \sup_{x\in[-K, K]}\left(\frac{1}{u_0}C_H(x)+C_I(x)\right)\Vert U-U_0\Vert_{0, \eta}\\ 
		&\leq \frac{\varepsilon}{6},
	\end{align*}
	whenever $\Vert U-U_0\Vert_{0, \eta}\leq \delta_1$. Thus \eqref{A.46} holds, and this concludes Part A.\bigskip

	{\noindent\textbf{Part B:} We prove that for each admissible profile $U_0\in\mathcal{A}$ and $\varepsilon>0$, there is $\delta>0$ such that whenever 
	\begin{equation*}
		\Vert U-U_0\Vert_{0, \eta}\leq \delta, 
	\end{equation*}
	we have
	\begin{equation*}
		\Vert V-V_0\Vert_{1, \eta}\leq \varepsilon.
	\end{equation*}
	}
	By Lemma \ref{LEM3.3} we know that $V=\mathcal{T}(U)\in \mathcal{A}$ and $V_0=\mathcal{T}(U_0)\in \mathcal{A}$. Therefore
	\begin{equation*}
		|V'(x)|\leq C_U \text{ and } |V_0'(x)|\leq C_U, \; \forall x \in \R. 
	\end{equation*}
	Let $K>0$ be such that 
	\begin{equation*}
		C_Ue^{-\eta K}\leq \frac{\varepsilon}{12} .
	\end{equation*}
	We have
	\begin{align}
		\nonumber	\sup_{x\in \mathbb R} e^{-\eta |x|}|V'(x)-V_0'(x)| &\leq \sup_{x\leq -K} e^{-\eta |x|}|V'(x)-V_0'(x)| + \sup_{|x|\leq K} e^{-\eta |x|}|V'(x)-V_0'(x)|\\
		\nonumber	&\quad + \sup_{x\geq K}e^{-\eta |x|}|V'(x)-V_0'(x)|\\ 
		\nonumber	&\leq e^{-\eta K}\left(\sup_{x\leq -K}|V'(x)-V_0'(x)|+\sup_{x\geq K}|V'(x)-V_0'(x)|\right) \\
		\nonumber	&\quad + \sup_{|x|\leq K}e^{-\eta |x|}|V'(x)-V_0'(x)| \\ 
		\label{B.65}	& \leq \frac{2\varepsilon}{6}+\sup_{|x|\leq K}e^{-\eta |x|}|V'(x)-V_0'(x)|.
	\end{align}
	Thus  there remains only to  establish that
	\begin{equation}\label{A.65}
		\sup_{|x|\leq K}e^{-\eta |x|}|V'(x)-V_0'(x)|\leq \frac{\varepsilon}{6}.
	\end{equation}
	We note that $V$ and $V_0$ satisfy \eqref{3.2}, therefore
	\begin{equation}\label{3.74}
		V'(x)=\lambda(x)V(x)- \kappa(x)V^2(x),\text{ and } V_0'(x) = \lambda_0(x)-\kappa_0(x), \text{ for all }x\in\R.
	\end{equation}
	Then we have that
	\begin{equation*}
		\begin{aligned}
			|V'(x)-V_0'(x)|&=|\lambda(x)V(x)- \kappa(x)V^2(x)-\lambda_0(x)V_0(x)+\kappa_0(x)V_{0}^2(x)|\\
			&=|(\lambda(x)-\lambda_{0}(x))V(x)+\lambda_{0}(x)(V(x)-V_{0}(x))\\
			&\quad+(\kappa_{0}(x)-\kappa(x))V_{0}^2(x)+\kappa(x)(V_{0}^2(x)-V^2(x))|\\
			&\leq|\lambda(x)-\lambda_{0}(x)||V(x)|+\lambda_{0}(x)|V(x)-V_{0}(x)|\\
			&\quad+|\kappa_{0}(x)-\kappa(x)|V_{0}^2(x)+\kappa(x)|V_{0}^2(x)-V^2(x)|\\
			&=|\lambda(x)-\lambda_{0}(x)||V(x)|+\lambda_{0}(x)|V(x)-V_{0}(x)|\\
			&\quad+|\kappa_{0}(x)-\kappa(x)|V_{0}^2(x)+\kappa(x)|V_{0}(x)-V(x)||V_{0}(x)+V(x)|.
		\end{aligned}
	\end{equation*}
	By using the fact that $0\leq V(x)\leq1$ and $0\leq V_0(x)\leq1$ for $x\in\R$, we have that
	\begin{equation}\label{3.75}
		\begin{aligned}
			|V'(x)-V_0'(x)|&\leq|\lambda(x)-\lambda_{0}(x)|+\lambda_{0}(x)|V(x)-V_{0}(x)|+|\kappa_{0}(x)-\kappa(x)|+2\kappa(x)|V_{0}(x)-V(x)|.
		\end{aligned}
	\end{equation}
	It follows from \eqref{A.43}, \eqref{A.61}, \eqref{A.56}, \eqref{A.52} and \eqref{B.39} and \eqref{B.40}  that
	\begin{equation}\label{3.76}
	    |V'(x)-V_0'(x)|\leq\left(C_\lambda(x)+\frac{1+\frac{\chi}{\sigma^2}}{u_0(c-\chi C_U)}(C_H(x)+u_0C_I(x))+C_\kappa(x)+2\frac{1+\frac{\chi}{\sigma^2}}{c-\chi C_U} \right)\Vert U-U_0\Vert_{0, \eta}.
	\end{equation}
	Let 
	\begin{equation}\label{A.71}
		\delta_2:=\frac{\varepsilon}{3}\left[\sup_{x\in [-K, K]}\left(C_\lambda(x)+\frac{1+\frac{\chi}{\sigma^2}}{u_0(c-\chi C_U)}(C_H(x)+u_0C_I(x))+C_\kappa(x)+2\frac{1+\frac{\chi}{\sigma^2}}{c-\chi C_U} \right)e^{-\eta |x|}\right]^{-1}, 
	\end{equation}
	then whenever $\Vert U-U_0\Vert_{0, \eta}\leq \delta_2$ we have
	\begin{equation*}
		\sup_{x\in[-K, K]}e^{-\eta |x|} |V'(x)-V_0'(x)| \leq \frac{\varepsilon}{3},  
	\end{equation*}
	therefore, recalling \eqref{B.65},
	\begin{equation}\label{A.70}
		\sup_{x\in\mathbb R} e^{-\eta |x|}|V'(x)-V_0'(x)| \leq \frac{\varepsilon}{2}.
	\end{equation}\medskip

	\noindent\textbf{Conclusion of Part B:}	
	Let $\delta:=\min(\delta_1, \delta_2)$ where $\delta_1$ is defined in \eqref{D.64} and $\delta_2$ is defined in \eqref{A.71}. Then if $\Vert U-U_0\Vert_{0, \eta}\leq \delta $ we know from Part A  \eqref{B.46} that 
	\begin{equation*}
		\sup_{x\in\mathbb R}e^{-\eta |x|}|V(x)-V_0(x)|\leq \frac{\varepsilon}{2}, 
	\end{equation*}
	and from \eqref{A.70} that
	\begin{equation*}
		\sup_{x\in\mathbb R} e^{-\eta |x|}|V'(x)-V_0'(x)| \leq \frac{\varepsilon}{2}.
	\end{equation*}
	So finally 
	\begin{equation*}
		\Vert V-V_0\Vert_{1, \eta}=\sup_{x\in\mathbb R}e^{-\eta |x|}|V(x)-V_0(x)| + \sup_{x\in\mathbb R} e^{-\eta |x|}|V'(x)-V_0'(x)| \leq \varepsilon.
	\end{equation*}
	Part B is proved. Since we always have $\Vert U-U_0\Vert_{0, \eta}\leq \Vert U-U_0\Vert_{1, \eta}$, the continuity holds for the norm $\Vert\cdot\Vert_{1, \eta}$. 
	Lemma \ref{LEM3.5} is proved. 
\end{proof}

\section{Proof of Theorem \ref{TH1.3}}
\label{Section4}
	From the definition of admissible functions $\mathcal{A}$, it is a nonempty, closed, convex, bounded subset of the Banach space $BUC^1_\eta \left( \R \right)$. By Lemmas \ref{LEM3.6} and \ref{LEM3.7}, we obtain that $\mathcal{T}$ is a continuous compact operator on $\mathcal{A}$. Therefore, by the Schauder fixed-point theorem, there exists $U$ in $\mathcal{A}$ such that
	$$\mathcal{T}(U) = U.$$
	Applying Lemma \ref{LEM3.3}, we have that $U\in C^1(\R)$ and {$0\leq U'(x)\leq C_U$} for any $x\in\R$.
	Therefore, we have that
	\begin{equation*}
		U(x)=\frac{u_0\ee^{\int_{0}^{x}\lambda(s)\ds}}{1+u_0\int_{0}^{x}\kappa(s)\ee^{\int_{0}^{s}\lambda(l)\dl}\ds},
	\end{equation*}
	wherein
	$$\lambda(x)=\frac{1+\frac{\chi}{\sigma^{2}}P(x) }{c-\chi P'(x)},$$
	and
	$$\quad\kappa(x)=\frac{1+ \frac{\chi}{\sigma^{2}}}{c-\chi P'(x)},$$
	and $P(x)$ is the unique solution of the elliptic equation
	\begin{equation}\label{4.2}
		P(x)-\sigma^{2} P''(x)=U(x), \forall x \in \mathbb{R}.
	\end{equation}
	Namely, we have that
	\begin{align}\label{4.3}
		U'(x)&=\frac{1}{c-\chi P'(x)}U(x) \left( \left( 1+\frac{\chi}{\sigma^{2}} P(x)\right)-\left(1+ \frac{\chi}{\sigma^{2}} \right)U(x) \right), \forall x \in \mathbb{R}.
	\end{align}
	Therefore, we have that
	\begin{align*}
		cU'(x)-\chi P'(x)U'(x)-\frac{\chi}{\sigma^{2}}U(x)( P(x)-U(x))=U(x)(1-U(x)), \forall x \in \mathbb{R}.
	\end{align*}
	By using \eqref{4.2}, we have that
	\begin{equation}\label{4.4}
		cU'(x)-\chi (P'(x)U(x))'=U(x)(1-U(x)), \forall x \in \mathbb{R}.
	\end{equation}
	We prove that
	$$U(\infty):=\lim_{x\rightarrow+\infty}U(x)=1 \text{ and }U(-\infty):=\lim_{x\rightarrow-\infty}U(x)=0.$$
	Indeed, since $U'(x)\geq0$ and $0<U(x)\leq1$ for any $x\in\R$, then $U(\infty)$ exists. By using $P$-equation \eqref{1.7}, the function $x \to P(x)$ is increasing and bounded, and by Lebesgue's dominated convergence theorem, we have  
	$$
	P(\pm \infty)=U( \pm \infty).
	$$ 
	Therefore,
	\begin{equation}\label{4.5}
		\lim_{x\rightarrow \pm \infty}U'(x)=0, \text{ and }\lim_{x\rightarrow \pm \infty}P'(x)=0.
	\end{equation}
	It follows from \eqref{4.4} and \eqref{4.5}, we have that
	\begin{equation*}
		\lim_{x\rightarrow \pm \infty}U(x)(1-U(x))=0,
	\end{equation*}
	and since $x \to U(x)$ increasing and $U(0)=u_0>0$, this implies that 
	$$
	U( - \infty)=0, \text{ and }U( + \infty)=1.
	$$ 
	This completes the proof of the Theorem \ref{TH1.3}.

\section{Numerical simulations}
\label{Section5}

We choose a bounded interval $[-K,K]$ and an initial distribution $u_0\in C([-K,K])$ as follows
\begin{equation}\label{5.1}
	\dis u_0(x)=\dfrac{2\, \ee^{\dis-\beta(x+K)}}{1+\ee^{\dis-\beta(x+K)}}. 
\end{equation}
In the following numerical simulations, we solve the PDE numerically using the upwind scheme, and we refer to Leveque \cite{Leveque} and Toro \cite{Toro} for more results on this subject. The numerical method used for the simulations is presented in Section \ref{SectionA} of the Appendix. 

In this section, we set the parameters of the system \eqref{A.1} all equal to one. That is 
$$
\sigma=\chi=\lambda=\kappa=1. 
$$
In Figure \ref{Fig3} we plot $x \to u_0(x)$ with the parameter values $\beta=1$, and $K=20$,  and the corresponding traveling wave profile which coincides with $x \to u(20,x)$ the solution of system \eqref{A.1}  at $t=20$ days.  

\begin{figure}[H]
	\centering
	\includegraphics[scale=0.17]{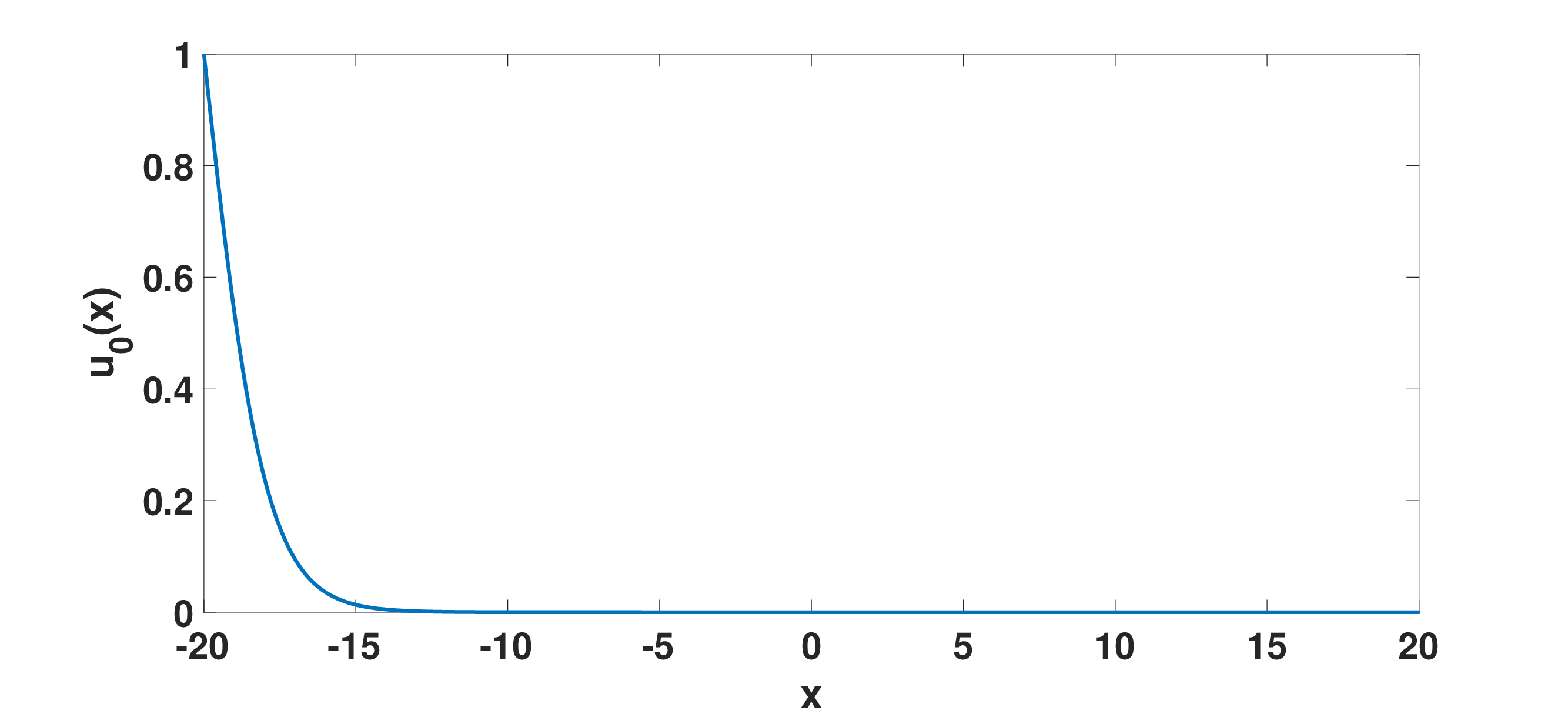}
\includegraphics[scale=0.17]{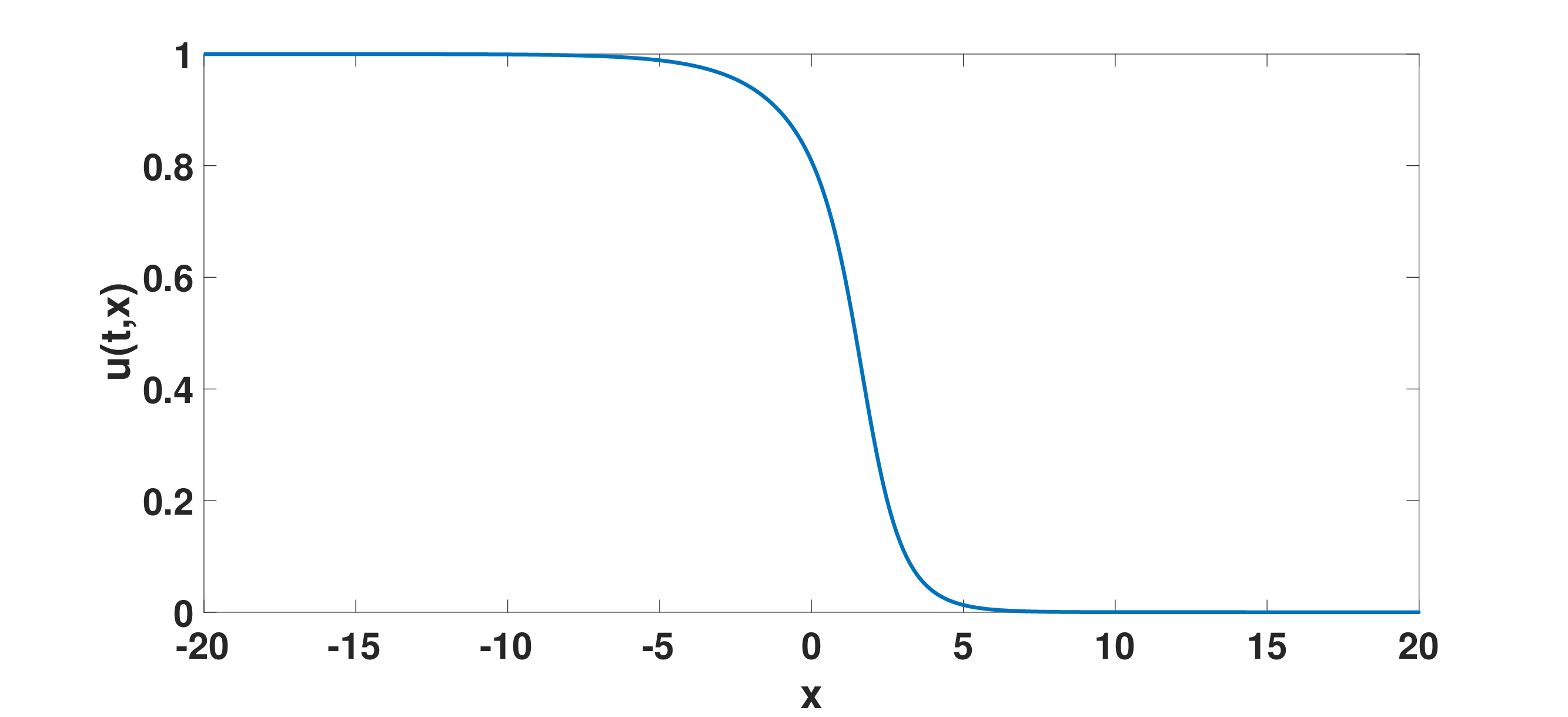}
	\caption{{\textit{On the left-hand side, we plot $x \to u_0(x)$ the initial distribution  of system \eqref{A.1}, obtained by using  formula \eqref{5.1} with $\beta=1$ and $K=20$. On the right-hand side, we plot the traveling wave profile which coincides with $x \to u(t,x)$ the solution of system \eqref{A.1}  at $t=20$ days.}}}\label{Fig3}
\end{figure}

In Figure \ref{Fig4}, we run a simulation from $t=0$ until $t=20$ of the model \eqref{A.1}. We observe that the traveling wave appears almost immediately after the starting time $t=0$.

\begin{figure}[H]
	\centering
	\includegraphics[scale=0.17]{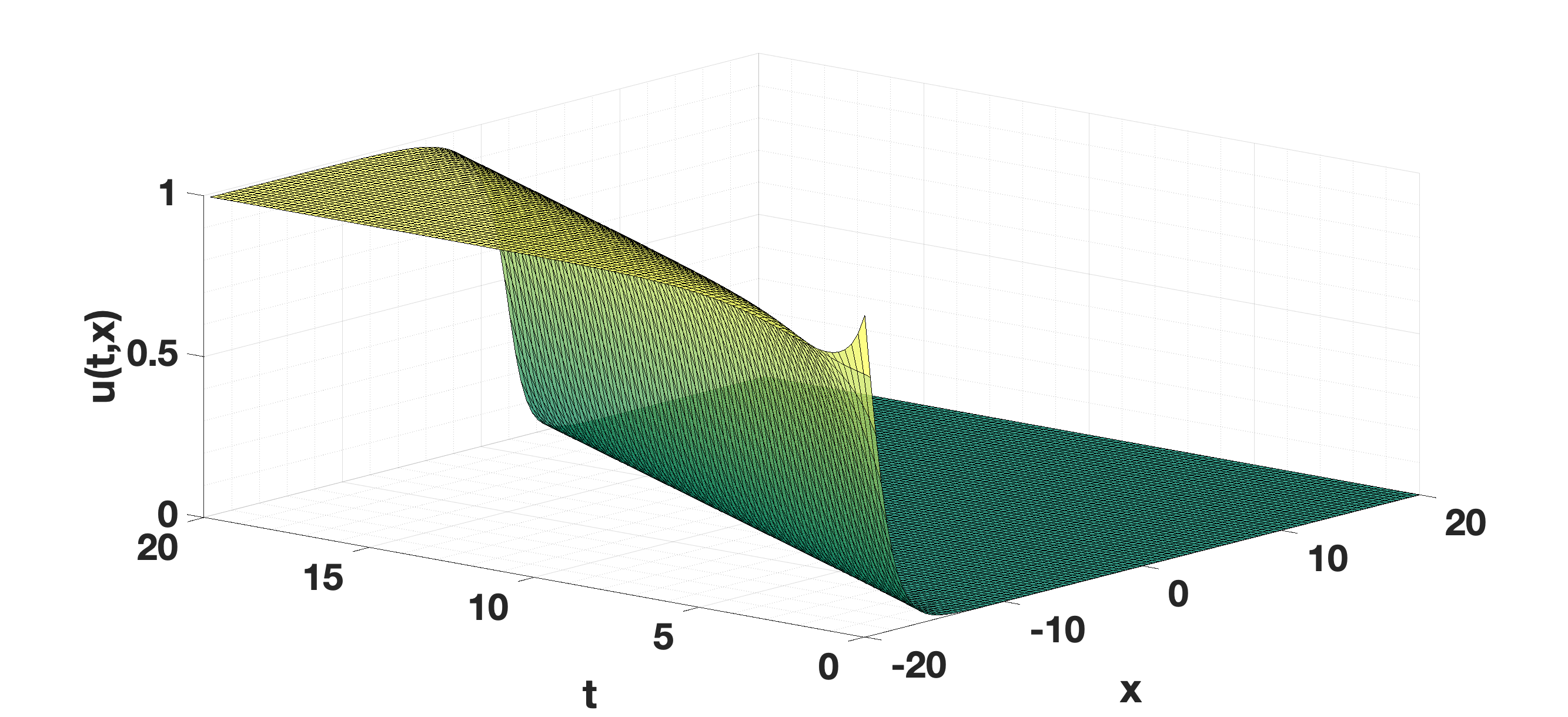}
	\includegraphics[scale=0.17]{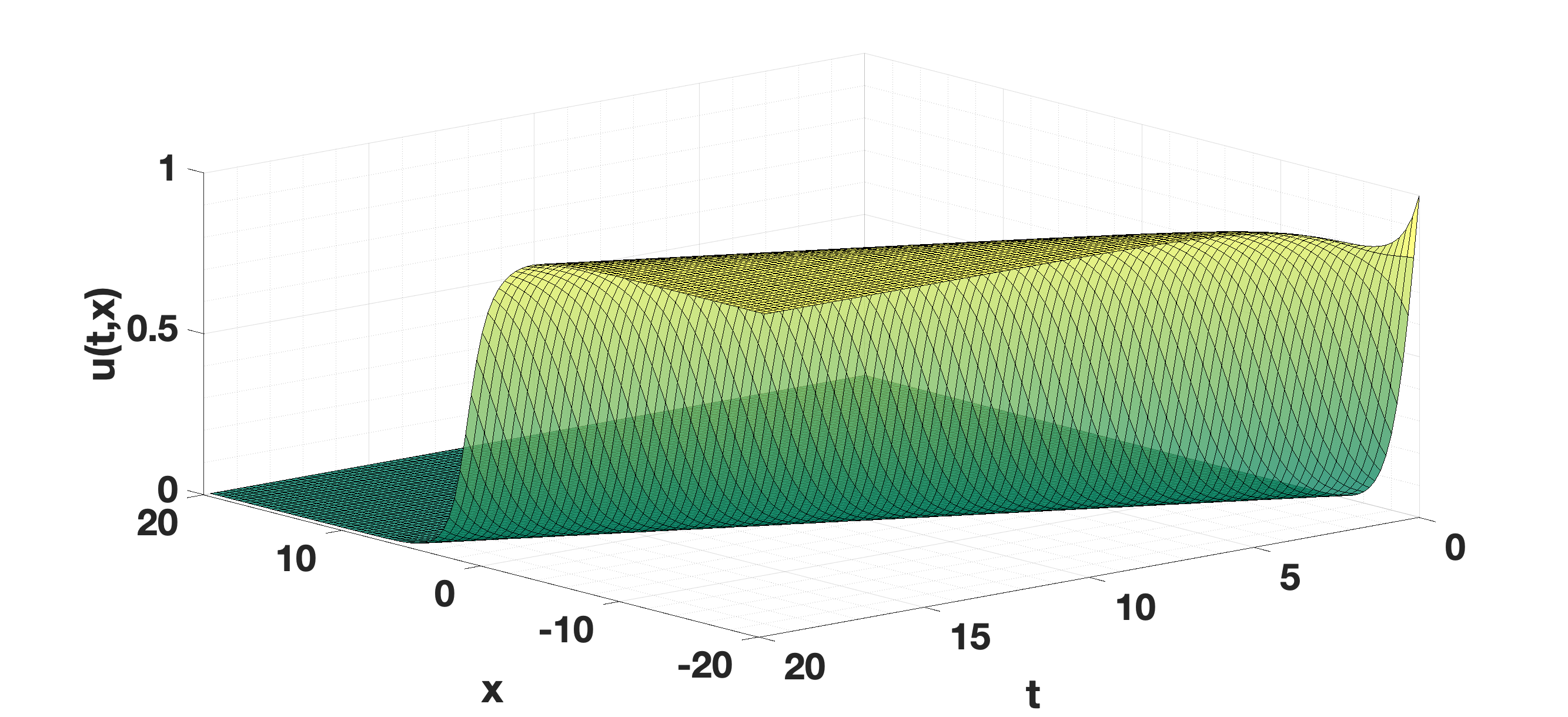}
	\caption{{\textit{In this figure, we plot the solution of the model \eqref{A.1} starting from the initial distribution \eqref{5.1} (with  $\beta=1$ and $K=20$).}}}\label{Fig4}
\end{figure}

Next we use the following initial value 
\begin{equation}\label{5.2}
	\dis u_0(x)=\max \left( 1- \beta \left(x+K\right), 0\right). 
\end{equation}
In Figure \ref{Fig5}, we plot $x \to u_0(x)$ the initial distribution  of system \eqref{A.1} (on the left-hand side), and the corresponding traveling wave profile which coincides with $x \to u(20,x)$ the solution of system \eqref{A.1}  at $t=20$ days. 

\begin{figure}[H]
	\centering
	\includegraphics[scale=0.17]{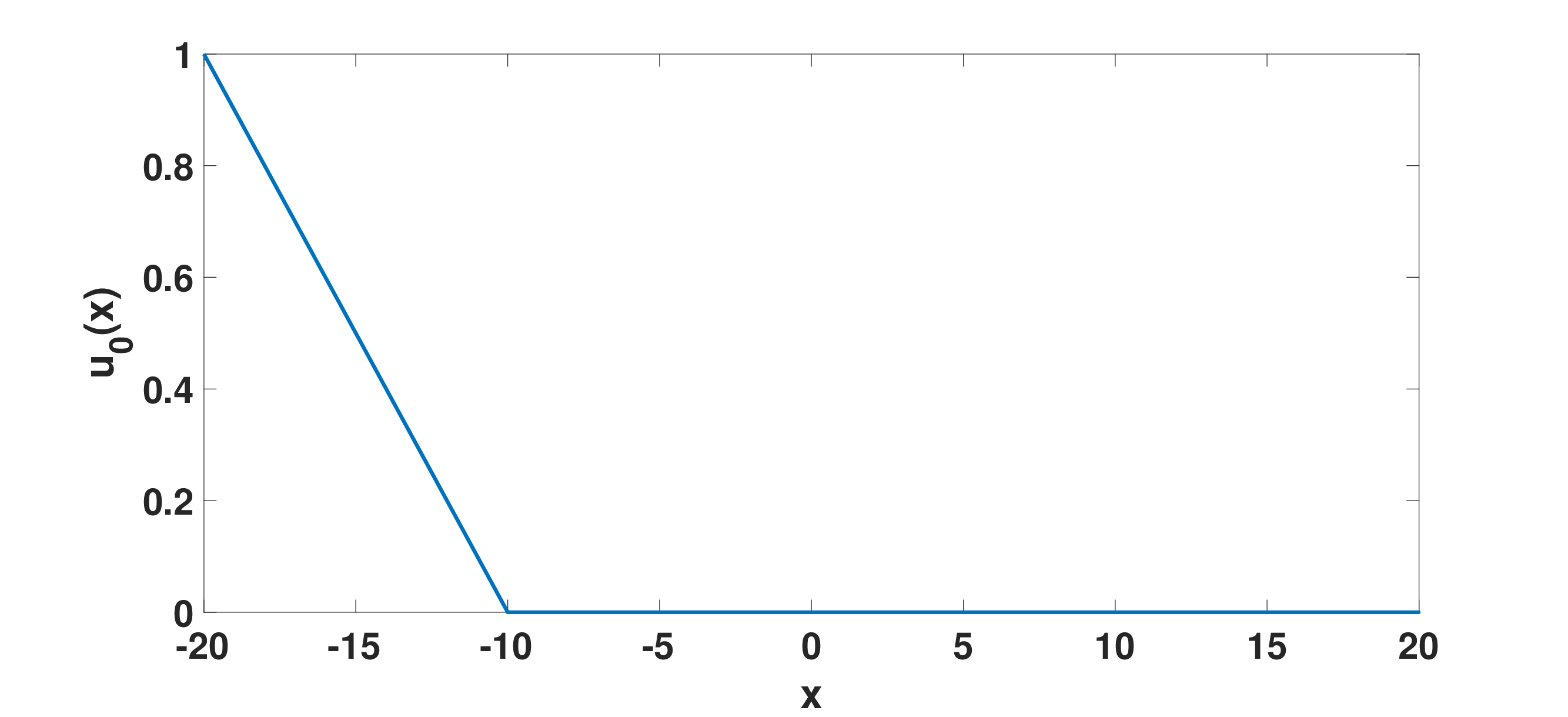}
	\includegraphics[scale=0.17]{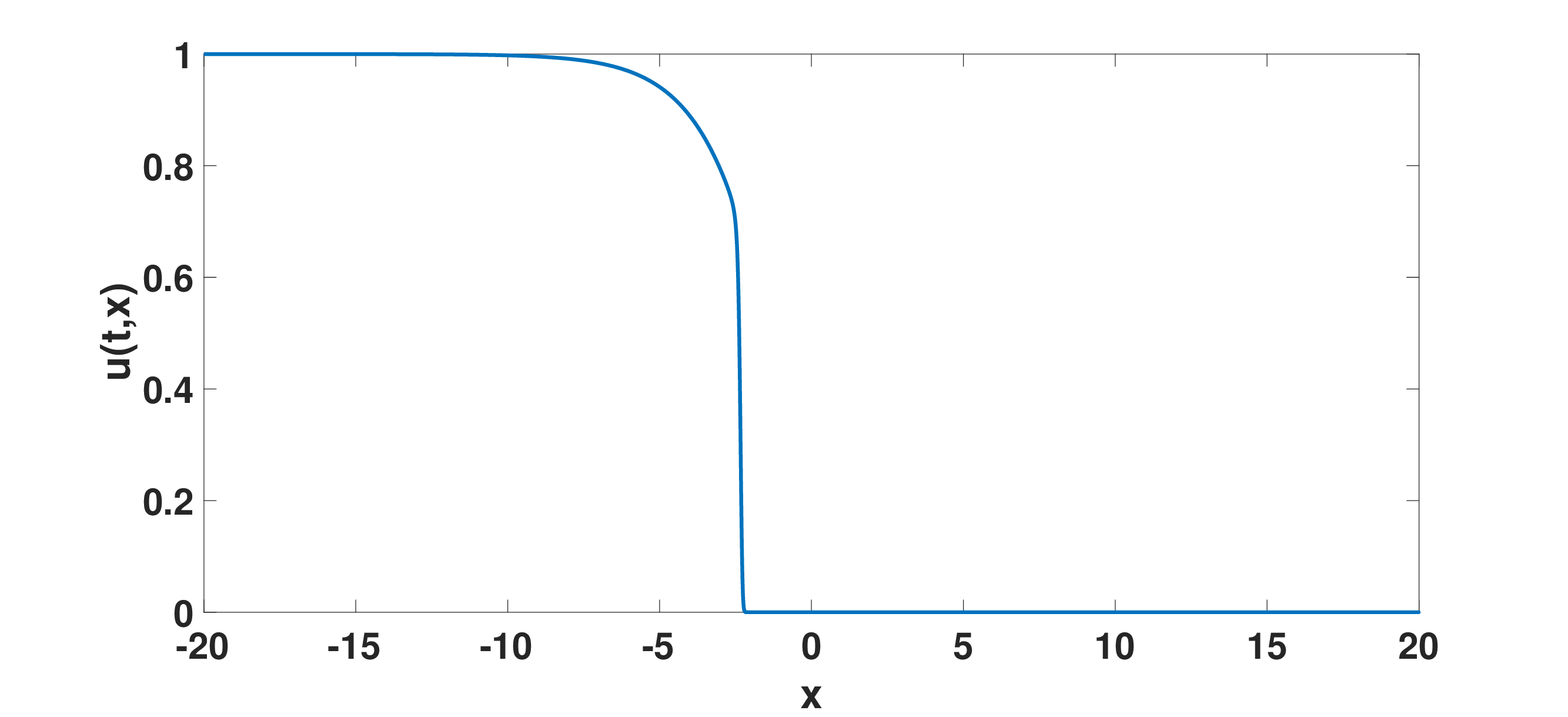}
	\caption{{\textit{On the left-hand side, we plot $x \to u_0(x)$ the initial distribution  of system \eqref{A.1}, obtained by using  formula \eqref{5.2} with $\beta=0.1$ and $K=20$. On the right-hand side, we plot the traveling wave profile which coincide with $x \to u(t,x)$ the solution of system \eqref{A.1}  at $t=20$ days.}}}\label{Fig5}
\end{figure}

In Figure \ref{Fig6}, we run a simulation from $t=0$ until $t=20$ of the model \eqref{A.1}. We observe that the traveling wave appears almost immediately after the starting time $t=0$.

\begin{figure}[H]
	\centering
	\includegraphics[scale=0.17]{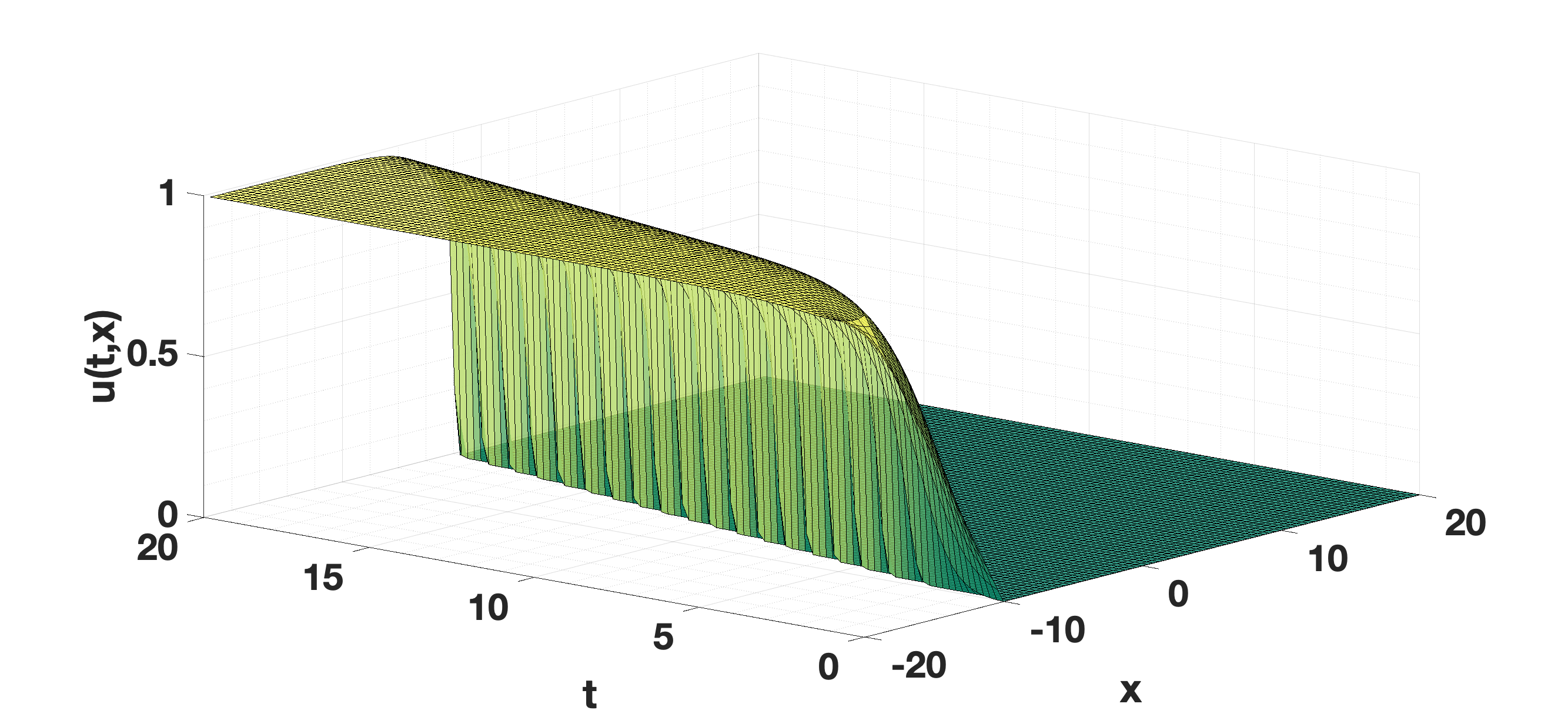}
	\includegraphics[scale=0.17]{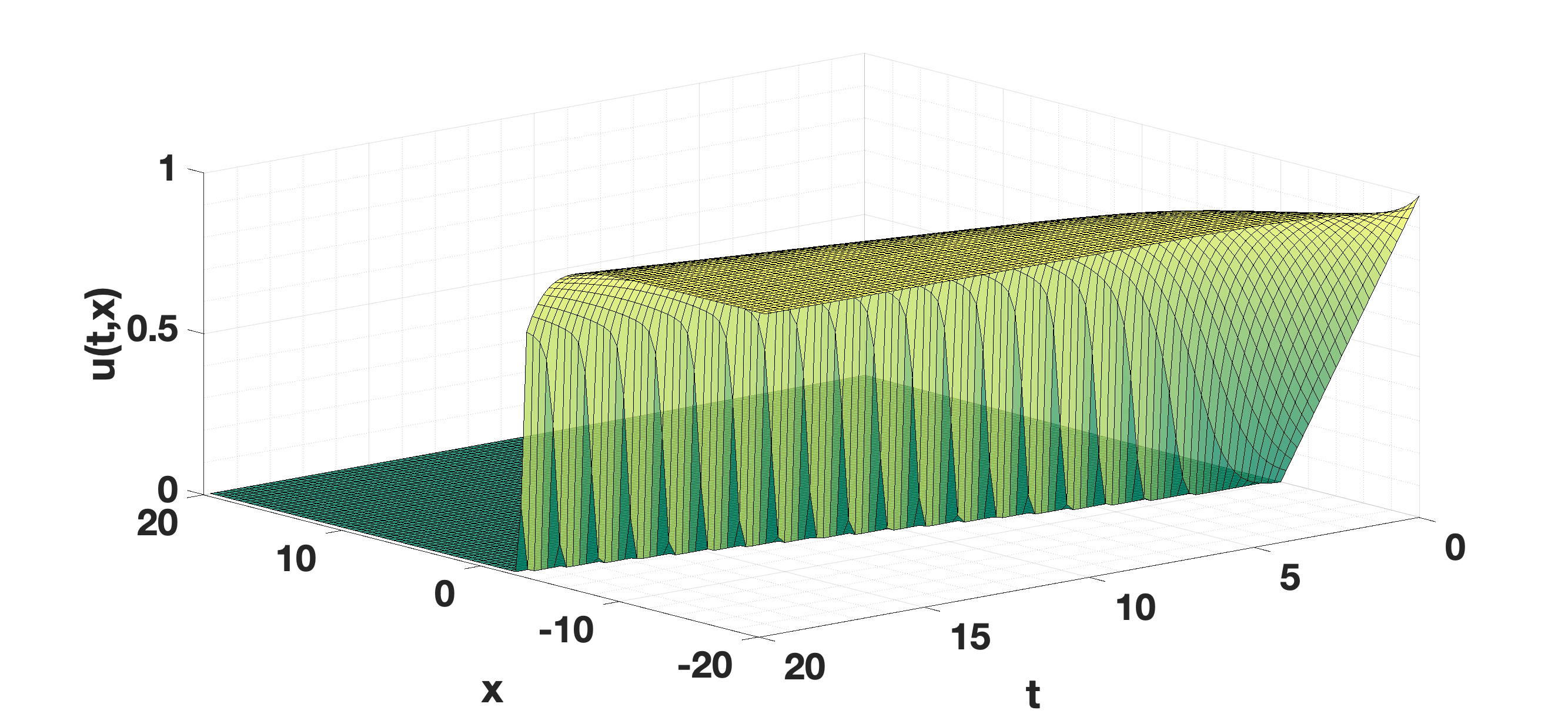}
	\caption{{\textit{In this figure, we plot the solution of the model \eqref{A.1} starting from the initial distribution \eqref{5.2} (with   $\beta=0.1$ and $K=20$).}}}\label{Fig6}
\end{figure}

On the one hand, our numerical simulations show that continuous traveling waves can be observed from an initial distribution decaying exponentially (slowly enough). On the other hand, sharp traveling waves can also be observed when starting the PDE with initial distributions equal to zero on the half-plane. So in practice, both types of traveling waves can be observed numerically. 

Now concerning the traveling speed, we observe numerically that sharp traveling waves are slower than continuous traveling waves. {In this aspect the situation is somehow similar to what is observed with reaction-diffusion equations (like the Fisher-KPP equation), in that the ``slowest'' wave is caught by starting from compactly supported initial data.} The question of the minimal speed is quite intricate given the nonlinear nature of the equation, and we leave it for future works. 

\section{Application to wound healing}
\label{Section6}

The wound healing assay is used in a range of disciplines to study the coordinated movement of a cell population. We refer to the paper of Jonkman et al. \cite{Jonkman} for a review on this topic. In this paper, we consider the cell-cell repulsion described by nonlinear diffusion, but cell-cell attraction also occurs and this problem was recently considered  by Webb \cite{Webb} (see also the references therein for more results). 
\begin{figure}[H]
	\centering
	\includegraphics[scale=0.9]{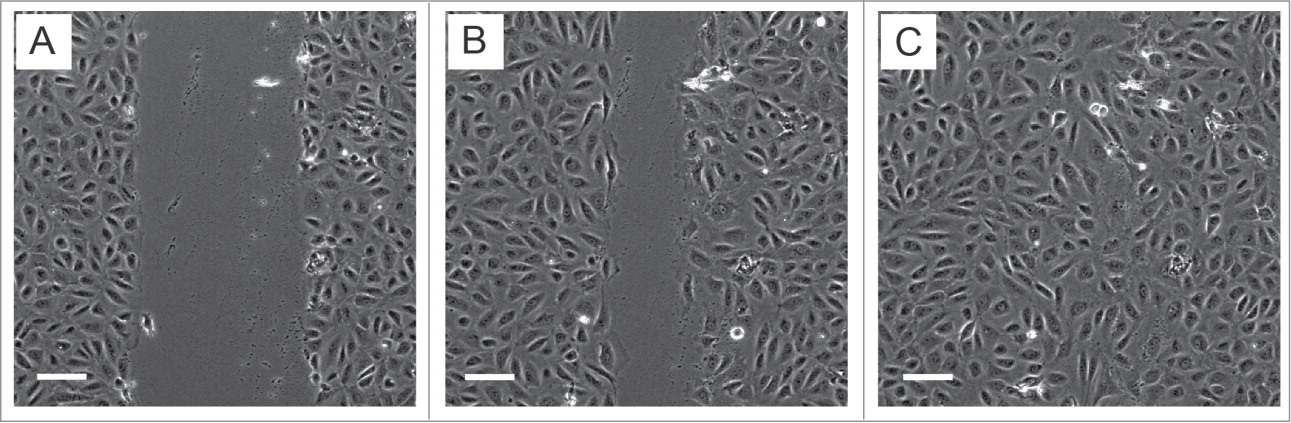}
	\caption{{\textit{Images from a scratch assay experiment at different time points. Human umbilical vein endothelial cells (HUVEC) were plated on gelatin-coated plastic dishes, wounded with a p20 pipette tip, and then imaged overnight using a microscope equipped with point visiting and live-cell apparatus. Scale bar = 120 $\mu$m. This figure is taken from Jonkman et al. \cite{Jonkman}.}}}\label{Fig7}
\end{figure}
In this section, we set the parameters of the system \eqref{A.1} as follows 
$$
\chi=\lambda=4 \text{ and }\sigma=\kappa=1. 
$$
\noindent  \textbf{Initial distribution for imperfect wound:} We choose a bounded interval $[-K,K]$ and an initial distribution $u_0\in C([-K,K])$ as follows
\begin{equation}\label{6.1}
	\dis u_0(x)=\dfrac{1}{2} \left( \dfrac{2\, \ee^{\dis-\beta(x+K)}}{1+\ee^{\dis-\beta(x+K)}} \right)+ \dfrac{1}{2}  \left(  \dfrac{2\, \ee^{\dis-\beta(K-x)}}{1+\ee^{\dis-\beta(K-x)}}  \right). 
\end{equation}
In Figure \ref{Fig9} we plot $x \to u_0(x)$ with the parameter values $\beta=0.5$, and $K=20$,  and  $x \to u(7,x)$ the solution of system \eqref{A.1}  at $t=7$ days. 

\begin{figure}[H]
	\centering
	\includegraphics[scale=0.17]{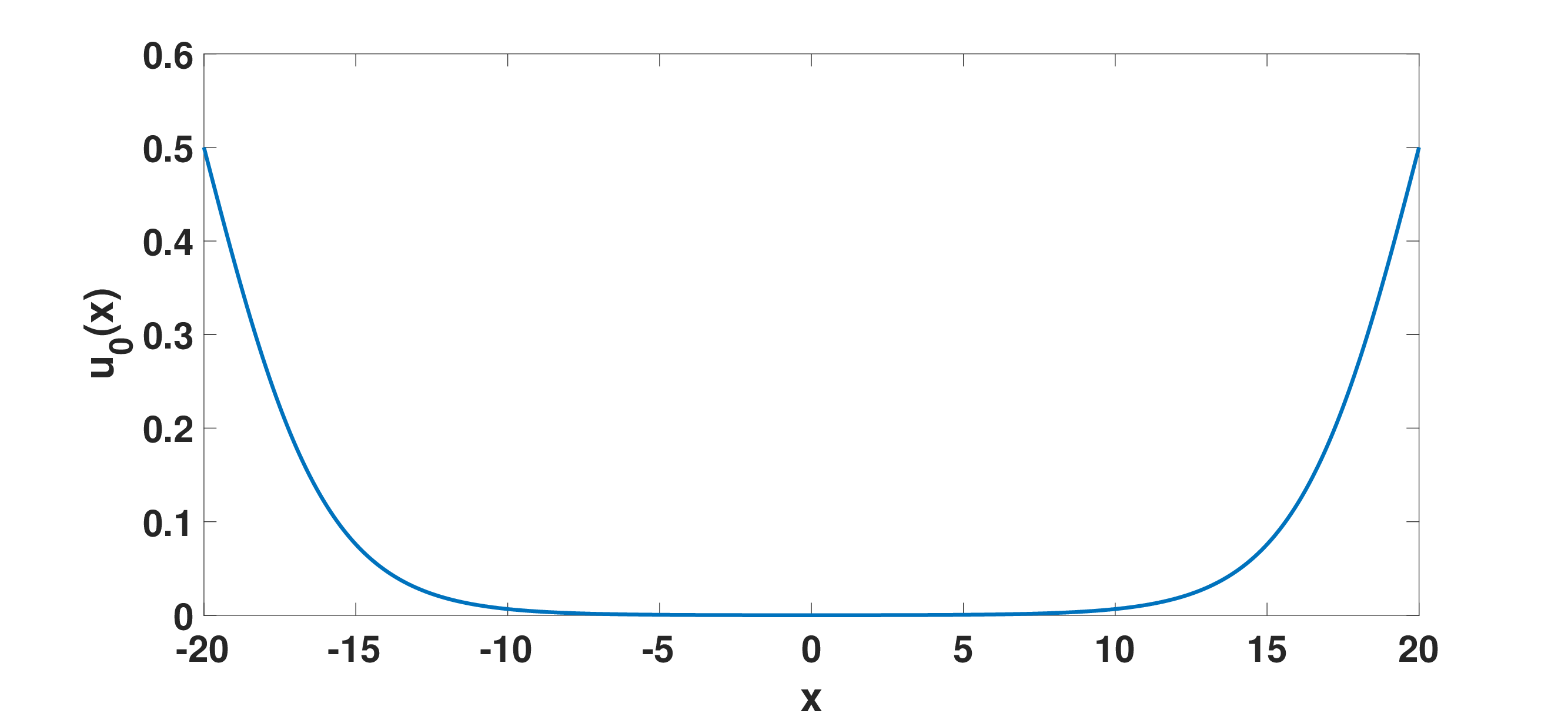}
	\includegraphics[scale=0.17]{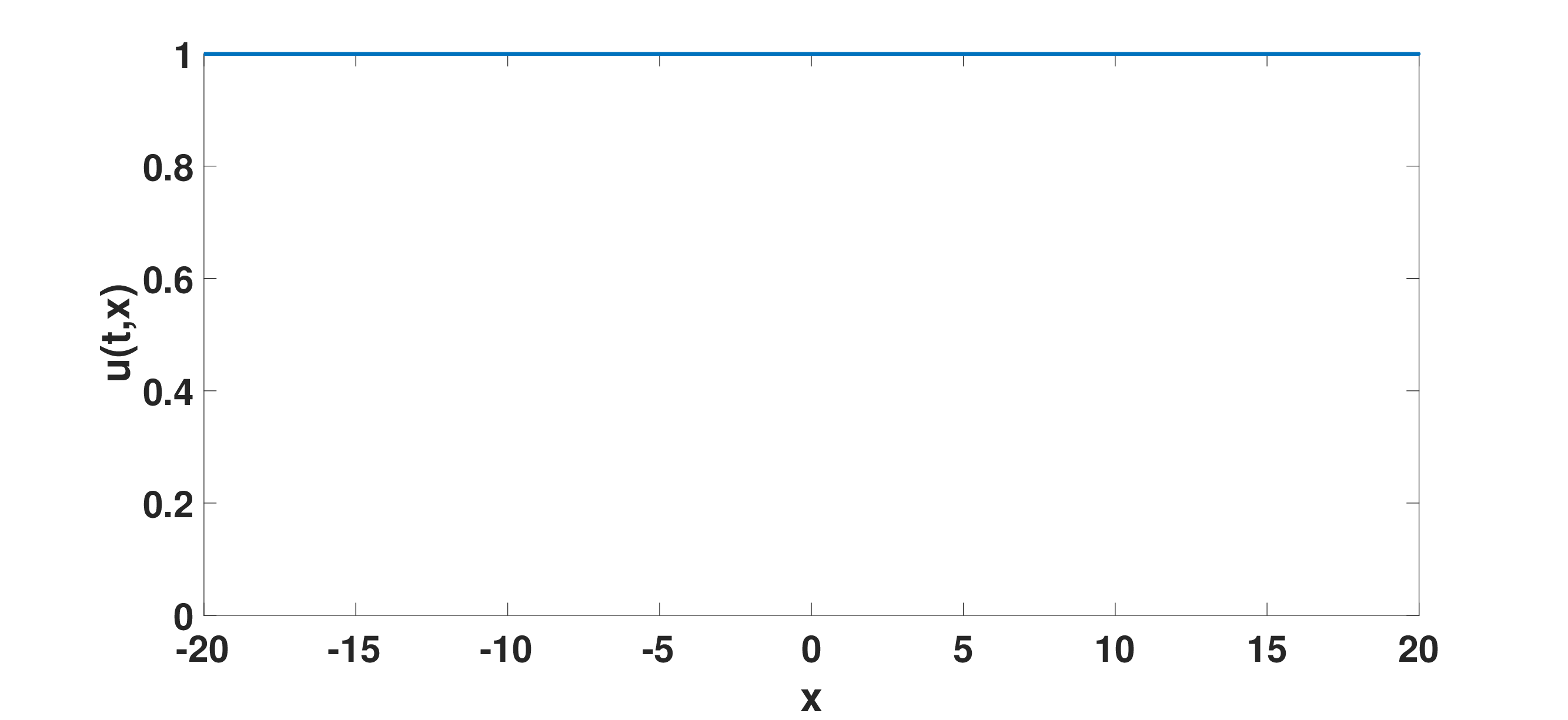}
	\caption{{\textit{On the left-hand side, we plot $x \to u_0(x)$ the initial distribution  of system \eqref{A.1}, obtained by using  formula \eqref{6.1} with $\beta=0.5$ and $K=20$. On the right-hand side, we plot $x \to u(t,x)$ the solution of system \eqref{A.1}  at $t=7$ days. }}}\label{Fig9}
\end{figure}

	In Figure \ref{Fig10}, we run a simulation from $t=0$ until $t=7$ of the model \eqref{A.1}. We observe that two traveling wave moving in opposite directions appears almost immediately after the starting time $t=0$. They merge together to give a flat distribution  approximately on day $2$.

\begin{figure}[H]
	\centering
	\includegraphics[scale=0.17]{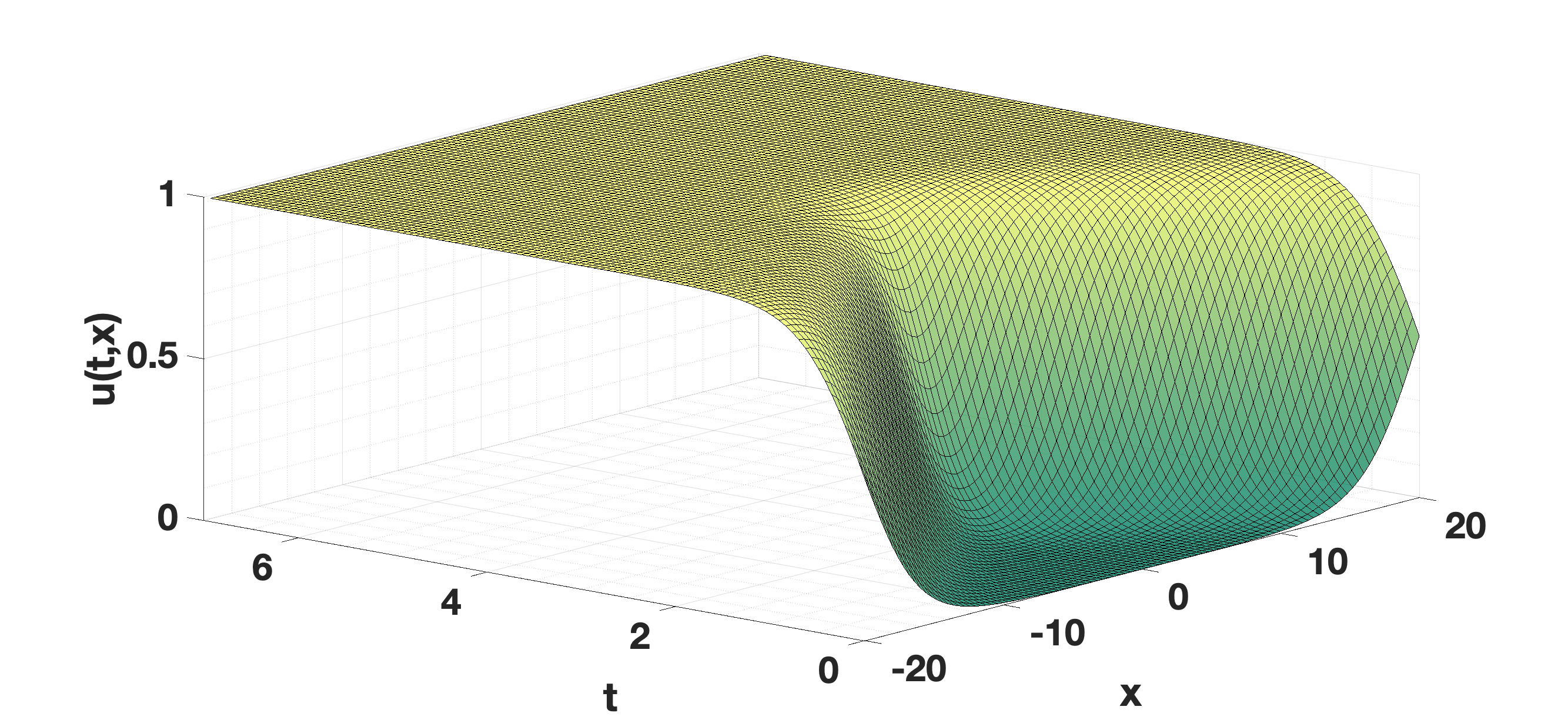}
	\includegraphics[scale=0.17]{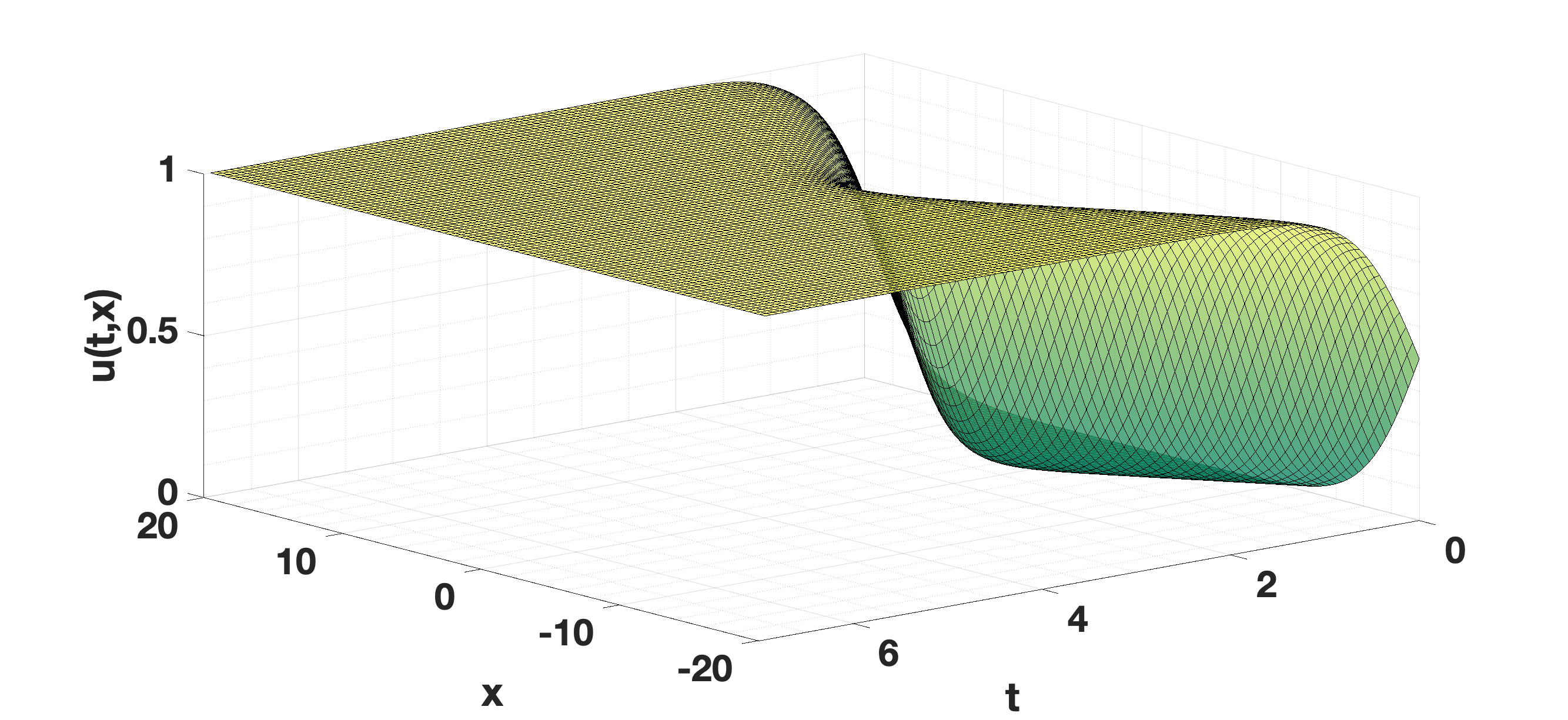}
	\caption{{\textit{In this figure, we plot the solution of the model \eqref{A.1} starting from the initial distribution \eqref{6.1} (with  $\beta=0.5$ and $K=20$).}}}\label{Fig10}
\end{figure}

\noindent \textbf{Initial distribution for  perfect wound:} We choose a bounded interval $[-K,K]$ and an initial distribution $u_0\in C([-K,K])$ as follows
\begin{equation}\label{6.2}
	\dis u_0(x)=\dfrac{1}{2} \left(\max \left( 1- \beta \left(x+K\right), 0\right) \right)+ \dfrac{1}{2}  \left( \max \left( 1- \beta \left(K-x\right), 0\right)  \right). 
\end{equation}
In Figure \ref{Fig11} we plot $x \to u_0(x)$ with the parameter values $\beta=0.07$, and $K=20$,  and  $x \to u(7,x)$ the solution of system \eqref{A.1}  at $t=7$ days. 

\begin{figure}[H]
	\centering
	\includegraphics[scale=0.17]{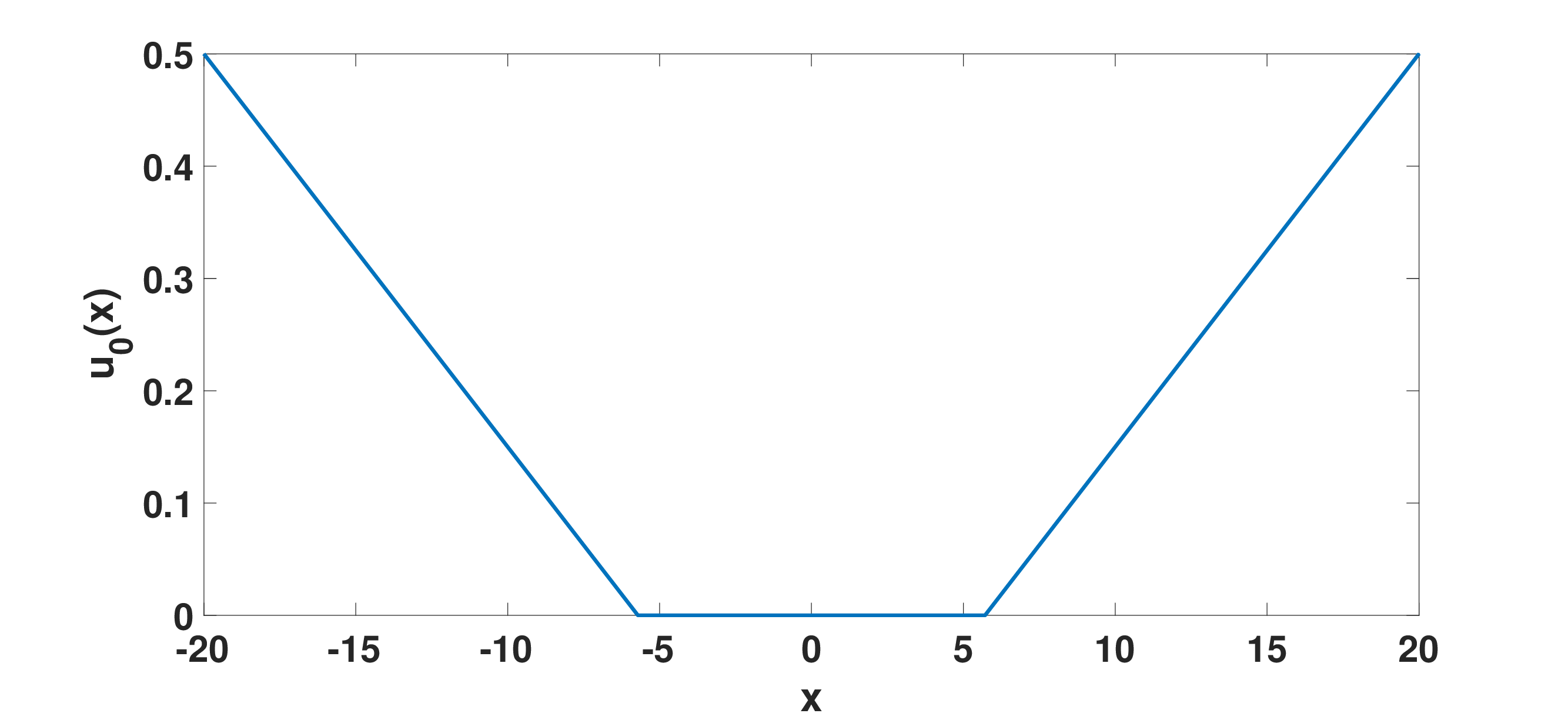}
	\includegraphics[scale=0.17]{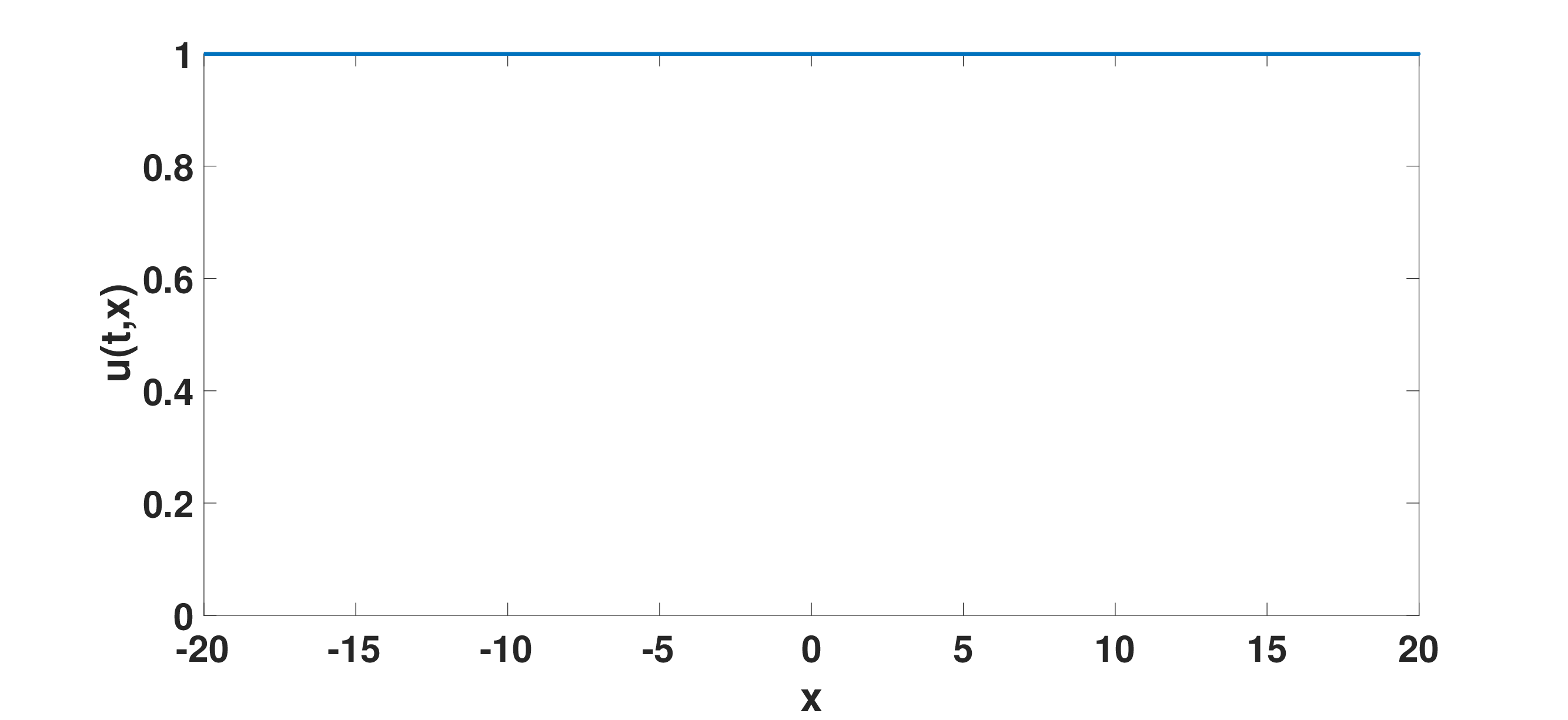}
	\caption{{\textit{On the left-hand side, we plot $x \to u_0(x)$ the initial distribution  of system \eqref{A.1}, obtained by using  formula \eqref{6.2} with $\beta=0.07$ and $K=20$. On the right-hand side, we plot  $x \to u(t,x)$ the solution of system \eqref{A.1}  at $t=7$ days.}}}\label{Fig11}
\end{figure}

In Figure \ref{Fig12}, we run a simulation from $t=0$ until $t=7$ of the model \eqref{A.1}  for the parameter values $\sigma=1$, and $\chi=1$. We observe that two traveling wave  moving in opposite directions appears almost immediately after the starting time $t=0$. They merge together to give a flat distribution  approximately on day $5$.

\begin{figure}[H]
	\centering
	\includegraphics[scale=0.17]{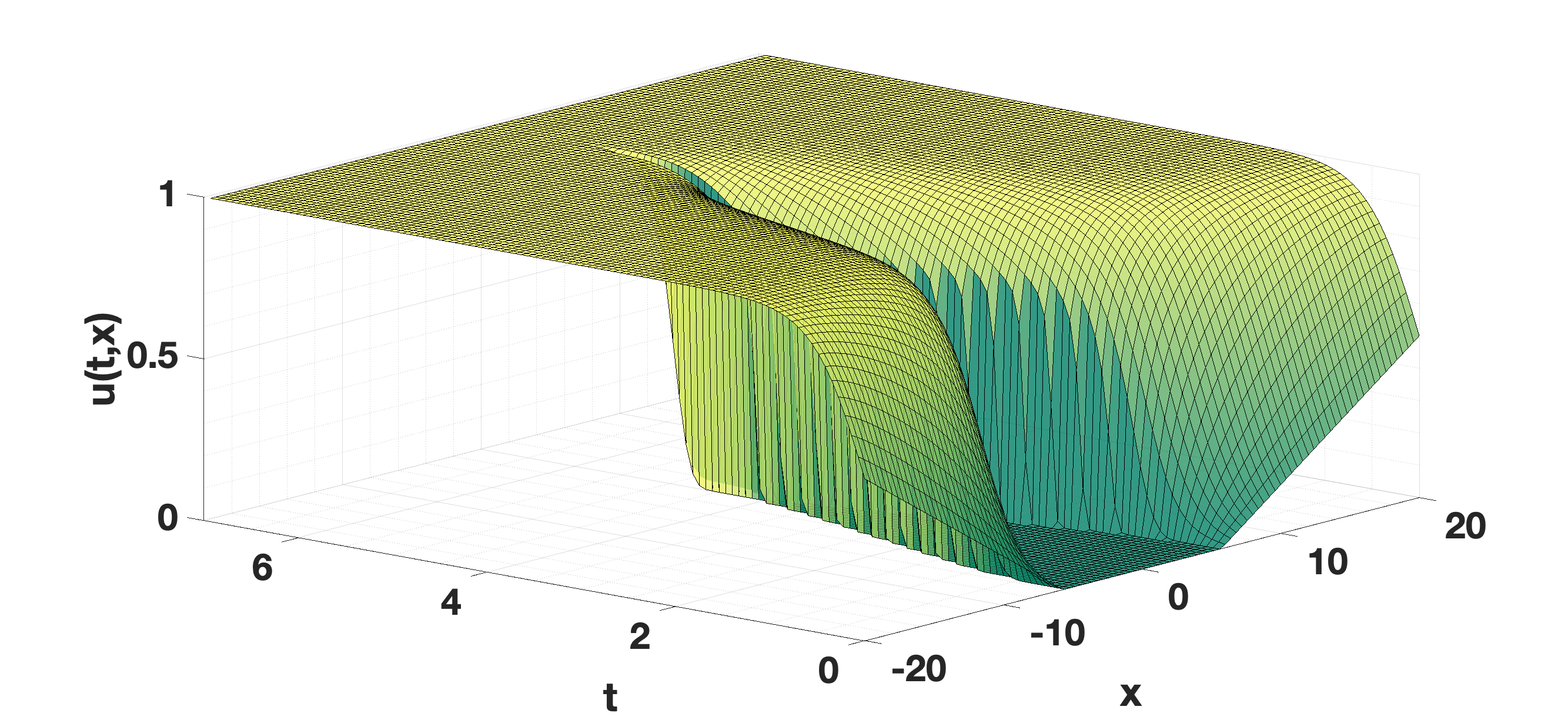}
	\includegraphics[scale=0.17]{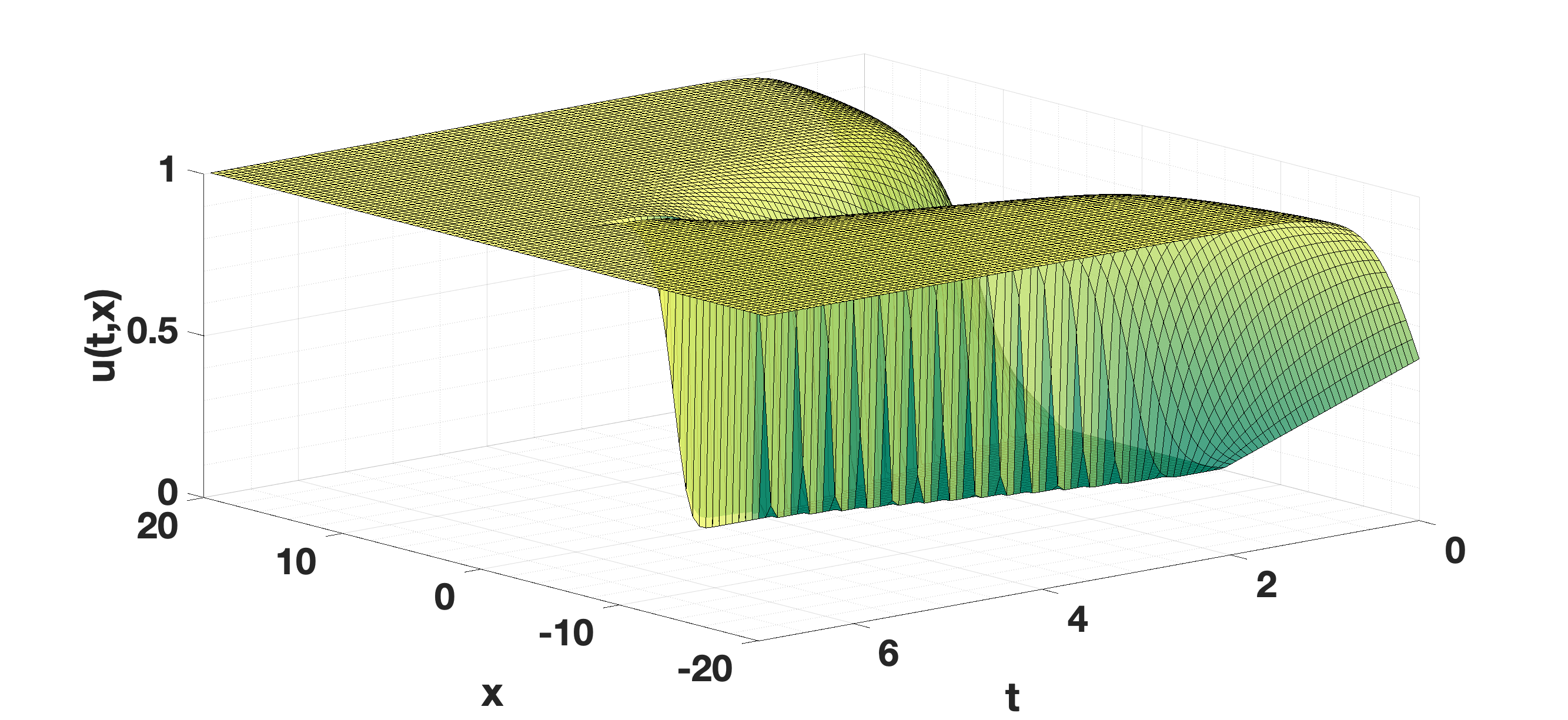}
	\caption{{\textit{In this figure, we plot the solution of the model \eqref{A.1} starting from the initial distribution \eqref{6.2} (with  $\beta=0.07$ and $K=20$).}}}\label{Fig12}
\end{figure}
It is observed that the speed of healing depends strongly on the imperfection of the wound. If we compare the two simulations, we see that the wound seems much larger in Figure \ref{Fig9} than in Figure \ref{Fig11}. But the time required for healing is about $2$ days in Figure \ref{Fig10} whereas it is about $5$ days in Figure \ref{Fig12}. Therefore, the imperfection of the wound has a strong influence on the healing time.

\paragraph{Competing interests:} The authors declare none. 

\appendix
\begin{center}
	{\LARGE	\textbf{Appendix}}
\end{center}
\section{Upwind method applied to the numerical scheme}
\label{SectionA}
In Section \ref{Section5}, we use the following system of PDE to run the numerical simulations  
\begin{equation}\label{A.1}
	\left\{\begin{array}{l}
		\partial_{t} u(t, x)=\chi \partial_{x}\left(u(t, x) \partial_{x} p(t, x)\right)+ \lambda \; u(t, x)(1-u(t, x)/ \kappa ), \quad t\in(0, T], x \in [-K, K], \vspace{0.2cm}\\
		p(t, x)-\sigma^{2} \partial_{x x} p(t, x)=u(t, x), \quad t\in(0, T], x \in [-K, K],\vspace{0.2cm}\\
		\partial_{x} p(t, - K)=\partial_{x} p(t, + K)=0, \quad t\in(0,T],
	\end{array}\right.
\end{equation}
with 
$$
u(t,x)=u_0(x) \in L^\infty_+ \left( [-K,K], \R\right).
$$
Now we use the finite volume method to consider equation \eqref{A.1}. Our numerical scheme reads as follows
\begin{equation}\label{A.2}
	u_{i}^{n+1}=u_{i}^{n}-\chi\frac{\Delta t}{\Delta x}\left(\phi(u_{i+1}^{n},u_{i}^{n})-\phi(u_{i}^{n},u_{i-1}^{n})\right)+\Delta t u_{i}^{n}(1-u_{i}^{n}), \quad i=1, 2, \ldots, M, 
\end{equation}
where the flux $\phi(u_{i+1}^{n},u_{i}^{n})$ for $i=0, \ldots, M$ defined as
\begin{equation}\label{A.3}
	\phi(u_{i+1}^{n},u_{i}^{n})=\left(v_{i+\frac{1}{2}}^{n}\right)^{+}u_{i}^{n}-\left(v_{i+\frac{1}{2}}^{n}\right)^{-}u_{i+1}^{n}=
	\begin{cases}
		v_{i+\frac{1}{2}}^{n}u_{i}^{n}, & v_{i+\frac{1}{2}}^{n}\geq0, \\
		v_{i+\frac{1}{2}}^{n}u_{i+1}^{n}, & v_{i+\frac{1}{2}}^{n}<0,
	\end{cases}
\end{equation}
where 
$$
x^+=\max (0,x), \text{ and }x^-=\max (0,-x),
$$ 
and
\begin{equation}\label{A.4}
	v_{i+\frac{1}{2}}^{n}=-\frac{p_{i+1}^{n}-p_{i}^{n}}{\Delta x}, i=0, 1, \ldots, M,
\end{equation}
where
$$
v_{0+\frac{1}{2}}^{n}=v_{M+\frac{1}{2}}^{n}=0.
$$
Moreover the vector $P^{n}$ is defined by  
\begin{equation}\label{A.5}
	P^{n}:=\left( I- \dfrac{\sigma^2}{\Delta x^2} A \right) ^{-1} U^{n},
\end{equation}
where
\begin{equation}\label{A.6}
	A=
	\begin{pmatrix}
		-1 & 1      &        &        &\\
		1  & -2     & 1      &        &\\
		&\ddots  &\ddots  &\ddots  &\\
		&        & 1      & -2     & 1 \\
		&        &        & 1      & -1
	\end{pmatrix}
	_{M\times M}.
\end{equation}
Indeed, we have
\begin{equation}\label{A.7}
	p_{i}^{n}-\dfrac{\sigma^2}{\Delta x^2} \left(p_{i+1}^{n}-2p_{i}^{n}+p_{i-1}^{n} \right)=u_{i}^{n},\quad i=1, 2, \ldots, M, 
\end{equation}
and since we use the Neumann boundary condition, we must impose 
$$
p_0^n=p_1^n \text{ and } p_M^n=p_{M+1}^n. 
$$
Since the Neumann boundary condition corresponds to a no flux boundary condition, we have
\begin{equation}\label{A.8}
	\begin{aligned}
		\phi(u_{1}^{n},u_{0}^{n})=0,\text{ and }  \phi(u_{M+1}^{n},u_{M}^{n})=0,
	\end{aligned}
\end{equation}
which corresponds to $p_0^n=p_1^n$ and $p_{M+1}^n=p_M^n$. 
Therefore, the numerical scheme at the boundary becomes
\begin{equation}\label{A.9}
	\begin{aligned}
		u_{1}^{n+1}&=u_{1}^{n}-\chi\frac{\Delta t}{\Delta x}\phi(u_{2}^{n},u_{1}^{n})+\Delta t \, u_{1}^{n}(1-u_{1}^{n}),\\
		u_{M}^{n+1}&=u_{M}^{n}+\chi\frac{\Delta t}{\Delta x}\phi(u_{M+1}^{n},u_{M}^{n})+\Delta t \, u_{M}^{n}(1-u_{M}^{n}).
	\end{aligned}
\end{equation}
Due to the boundary condition, we have the conservation of mass for equation \eqref{A.1} when the reaction term equals zero.


\begin{thebibliography}{99}
	
	\bibitem{Aronson1980}
	D.~G. Aronson.
	\newblock Density-dependent interaction-diffusion systems.
	\newblock In {\em Dynamics and modelling of reactive systems ({P}roc. {A}dv.
		{S}em., {M}ath. {R}es. {C}enter, {U}niv. {W}isconsin, {M}adison, {W}is.,
		1979)}, volume~44 of {\em Publ. Math. Res. Center Univ. Wisconsin}, pages
	161--176. Academic Press, New York-London, 1980.

    \bibitem{Aronson-Weinberger} D. G. Aronson and H. F. Weinberger. Nonlinear diffusion in population genetics, combustion, and nerve
	pulse propagation. In: \textit{Partial Differential Equations and Related Topics: Ford Foundation Sponsored Program at Tulane University, January to May, 1974}. Berlin, Heidelberg: Springer Berlin Heidelberg,
1975, pp. 5--49. doi: 10.1007/BFb0070595.
	
	\bibitem{Atkinson1981}
	C. Atkinson, G.~E.~H. Reuter, and C.~J. Ridler-Rowe.
	\newblock Traveling wave solution for some nonlinear diffusion equations.
	\newblock {\em SIAM J. Math. Anal.}, \textbf{12(6)} (1981), 880-892.
	\bibitem{Duc-Fu-Mag-18} A. Ducrot, X. Fu, and P. Magal. Turing and {T}uring-{H}opf bifurcations for a reaction diffusion
	equation with nonlocal advection. \textit{J. Nonlinear Sci.}, \textbf{28(5)} (2018), 1959-1997.
	
	\bibitem{ducrot2011vitro}
	A. Ducrot, F. Le~Foll, P. Magal, H. Murakawa, J. Pasquier,
	and G.~F. Webb.
	\newblock An in vitro cell population dynamics model incorporating cell size,
	quiescence, and contact inhibition.
	\newblock {\em Mathematical Models and Methods in Applied Sciences},
	\textbf{21} (2011), 871-892.
	
	\bibitem{Duc-Mag-14}
	A. Ducrot and P. Magal.
	\newblock Asymptotic behavior of a nonlocal diffusive logistic equation.
	\newblock {\em SIAM J. Math. Anal.}, \textbf{46(3)} (2014), 1731-1753.
	
	\bibitem{ducrot2020one}
	A. Ducrot and D. Manceau.
	\newblock A one-dimensional logistic like equation with nonlinear and nonlocal
	diffusion: strong convergence to equilibrium.
	\newblock {\em Proc. Amer. Math. Soc.},\textbf{ 148(8) } 2020, 3381-3392.


	\bibitem{FGM-20} 	X. Fu, Q. Griette, and P. Magal, A cell-cell repulsion model on a hyperbolic Keller-Segel equation, \textit{Journal of Mathematical Biology}, \textbf{80} (2020), 2257-2300.

	\bibitem{FGM-21A} 	X. Fu, Q. Griette, and P. Magal, Existence and uniqueness of solutions for a hyperbolic Keller--Segel equation, \textit{Discrete and Continuous Dynamical Systems - Series B}, \textbf{26(4)} (2021), 1931-1966. 

	\bibitem{FGM-21B} X. Fu, Q. Griette, and P. Magal, Sharp discontinuous traveling waves in a hyperbolic Keller-Segel equation, \textit{Mathematical Models and Methods in Applied Sciences}, \textbf{31(05)},  (2021), 861-905.

	\bibitem{GrietteHendersonTuranova} Q. Griette, C. Henderson, and O. Turanova, Speed-up of traveling waves by negative chemotaxis, \textit{Journal of Functional Analysis}, \textbf{285(10)}, (2023), 110115.  
	\bibitem{Hamel2020} F. Hamel, and C. Henderson. Propagation in a Fisher-KPP equation with non-local advection. \textit{Journal of Functional Analysis}, \textbf{278(7)}, (2020), 108426.

	\bibitem{Henderson} C. Henderson. Slow and fast minimal speed traveling waves of the FKPP equation with chemotaxis. \textit{Journal de Mathématiques Pures et Appliquées}, \textbf{167}, (2021), 175-203. 

	\bibitem{Jonkman}  J. E. N. Jonkman et al. An introduction to the wound healing assay using live-cell microscopy. \textit{Cell adhesion} \&  \textit{migration}, \textbf{8(5)} (2014), 440-451.

	\bibitem{KPP} A.~N. Kolmogorov, I.~G. Petrovsky, and N.~S. Piskunov. \'Etude de l'\'equation de la diffusion avec croissance
	    de la quantité de matière et son application \`a un probl\`eme biologique. \textit{Bull. Univ. Etat Moscou S\`er. Inter. A} \textbf{1} (1937), pp. 1--26.
	\bibitem{Leveque} R.~J.	Leveque, Finite volume methods for hyperbolic problems. Cambridge University Press, Cambridge (2002). 
	\bibitem{Pas-Mag-Bou-Web-11} J. Pasquier, P. Magal, C. Boulang{\'e}-Lecomte, G. Webb, and F. Le~Foll. Consequences of cell-to-cell p-glycoprotein transfer on acquired multidrug resistance in breast cancer: a cell population dynamics model.  \textit{Biol. direct}, \textbf{6(1):5}, 2011.
	\bibitem{Pablo-Vazquez}  A. de Pablo and J. L. Vazquez, Traveling waves and finite propagation in a reaction-diffusion equation, \textit{J. Differential Equations}, \textbf{93,} (1991), 19-61.
	\bibitem{Toro} 	E.~F. Toro, \textit{Riemann solvers and numerical methods for fluid dynamics: a practical introduction}. Springer, Berlin (2013). 
	
 	\bibitem{Webb} 	  G.~F. Webb, The force of cell-cell adhesion in a nonlocal advection diffusion model of wound healing. \textit{To appear in Mathematical Biosciences and Engineering}.  

	\bibitem{Weinberger1982} H.~F. Weinberger. Long-time behavior of a class of biological models. \textit{SIAM J. Math. Anal.} \textbf{13.3}, (1982), pp. 353--396. 
	
\end{thebibliography}
\end{document}